\setlist[itemize]{label=$\triangleright$}
\ifpdf\PassOptionsToPackage{pdftex,         
                      hyperfootnotes=false, 
                      pdfpagelabels}        
                     {hyperref}             
\newtheoremstyle{Bourbaki} 
	{\topsep}  
	{\topsep}  
	{\itshape} 
	{ }        
	{\scshape} 
	{. ---}    
	{ }        
	{}         
\newtheoremstyle{Bourbaki-remarque}
	{\topsep}
	{\topsep}
	{}
	{ }
	{\itshape}
	{. ---}
	{ }
	{}
\newtheoremstyle{Bourbaki-manicule}
	{\topsep}
	{\topsep}
	{\itshape}
	{ }
	{\scshape}
	{. ---}
	{ }
	{{\normalfont \llap{\HandCuffRight~~}} 
	  \thmname{#1}\thmnumber{ #2}\thmnote{ \normalfont(#3)}}
\newtheoremstyle{Bourbaki-petit}
	{3pt}
	{3pt}
	{\small}
	{ }
	{\itshape}
	{. ---}
	{ }
	{}
\theoremstyle{Bourbaki}
	\newtheorem{proposition}{Proposition}[section]
	\newtheorem{lemma}[proposition]{Lemma}
	\newtheorem{corollary}[proposition]{Corollary}
	\newtheorem{definition}[proposition]{Definition}
	\newtheorem*{definition*}{Definition}
	\newtheorem*{proposition*}{Proposition}
	\newtheorem*{corollary*}{Corollary}
	\newtheorem{theorem}[proposition]{Theorem}
	\newtheorem*{theorem*}{Theorem}
	\newtheorem*{conjecture*}{Conjecture}
\theoremstyle{Bourbaki-remarque}
	\newtheorem{remark}[proposition]{Remark}
	\newtheorem*{remark*}{Remark}
	\newtheorem*{remaks*}{Remarks}
\theoremstyle{Bourbaki-petit}
	\appto\smallremark{\leftskip\parindent}
	\appto\smallexample{\leftskip\parindent}
\patchcmd{\thmhead}{(#3)}{#3}{}{} 
\renewenvironment{proof}[1][\proofname]{\par
\pushQED{\qed}%
\normalfont \topsep6\p@\@plus6\p@\relax
\trivlist
\item\relax
{\itshape
#1\@addpunct{. ---}}\hspace\labelsep\ignorespaces
}{%
\popQED\endtrivlist\@endpefalse
}
\begin{document}


\title{\color{Maroon}\rmfamily\normalfont\spacedallcaps{Exponentiable Higher Toposes}}
\author{\spacedlowsmallcaps{Mathieu Anel} \and \spacedlowsmallcaps{Damien Lejay}}
\date{} 
\maketitle


\begin{abstract}
We characterise the class of exponentiable $\infty$\=/toposes:
$\mathcal X$ is exponentiable if and only if
$\mathcal S\mathrm{h}(\mathcal X)$ is a continuous $\infty$\=/category.
The heart of the proof is the description of the
$\infty$\=/category of $\mathcal C$\=/valued sheaves on $\mathcal X$ as an
$\infty$\=/category of functors that satisfy finite limits conditions as well
as filtered colimits conditions (instead of limits conditions purely); we call
such functors $\omega$-continuous sheaves.

As an application, we show that when $\mathcal X$ is
exponentiable, its $\infty$\=/category of stable sheaves
$\mathcal S\mathrm{h}(\mathcal X, \mathrm{Sp})$ is a dualisable object in
the $\infty$\=/category of presentable stable $\infty$\=/categories.
\end{abstract}



\tableofcontents


\section{Introduction} 

\subsection{Exponentiability of
\texorpdfstring{$\infty$\=/toposes}{infinity-toposes}}
An
$\infty$\=/topos $\mathcal X$ is said to be exponentiable if the functor
$\mathcal Y \mapsto \mathcal X \times \mathcal Y$
has a right adjoint:
$\mathcal Z \mapsto \mathcal Z^\mathcal X$. The idea that exponentiability can
be seen as a form of dualisability is made concrete by the following theorem.

\vspace{0.3cm}

\noindent \HandCuffRight\, {\scshape \bfseries
\Cref{DualisabilityOfStableSheaves}. ---} \emph{Let $\mathcal X$ be an
exponentiable $\infty$\=/topos, then the $\infty$\=/category $\mathcal
S\mathrm{h}(\mathcal X, \mathrm{Sp})$ of stable sheaves on $\mathcal X$ is a
dualisable object of the $\infty$\=/category of presentable stable
$\infty$\=/categories.  }

\vspace{0.3cm}

In order to prove this dualisability result, we characterise the class of
exponentiable $\infty$\=/toposes.
The exponentiability of toposes was studied and understood in the 1981 article
\emph{Continuous
categories and exponentiable toposes} by
\textsc{Johnstone} and \textsc{Joyal}%
~\cite{doi:10.1016/0022-4049(82)90083-4}.
We obtain a characterisation of exponentiable $\infty$\=/toposes by following
a similar proof. Another independent proof of the characterisation of
exponentiable $\infty$\=/toposes has been written by \textsc{Lurie} in
his book \emph{Spectral Algebraic Geometry}%
~\cite[Theorem 21.1.6.12]{lurie2018spectral}.
Our main addition here is the use of the tensor product
of $\infty$\=/categories. In particular in
\cref{thm:le_produit_tensoriel_est_le_coproduit_des_logos} we relate the
tensor product of $\infty$\=/categories with the product in
the $\infty$\=/category of $\infty$\=/toposes.
The characterisation of exponentiable
$\infty$\=/toposes ends up being the same as the one for toposes:

\vspace{0.3cm}
\noindent \HandCuffRight\, {\scshape \bfseries \Cref{ExpTh}. ---} \emph{An
$\infty$\=/topos $\mathcal X$ is exponentiable if and only if the
$\infty$\=/category $\mathcal S\mathrm{h}(\mathcal X)$ is continuous i.e
when the colimit functor $\mathrm{Ind}(\mathcal S\mathrm{h}(\mathcal X))
\to \mathcal S\mathrm{h}(\mathcal X)$ has a left adjoint.  }

\vspace{0.3cm}

The pivot of the exponentiability proof is a rewriting of the
$\infty$\=/category $\mathcal S\mathrm{h}(\mathcal X)$
in terms of finite limits and arbitrary colimits.
When a presentable $\infty$\=/category is continuous it can be obtained as
a coreflective localisation of an $\infty$\=/category of ind-objects:
\[
	\begin{tikzcd}[ampersand replacement=\&]
		\mathrm{Ind}(D) \arrow[r, shift right, swap, "\varepsilon"]
		\&
		\mathcal S\mathrm{h}(\mathcal X)\thinspace,
		\arrow[l, shift right, swap, "\beta"]
	\end{tikzcd}
\]
where $\varepsilon$ is cocontinuous and
$\beta$ is a fully faithful left adjoint to $\varepsilon$. This allows us
to describe the $\infty$\=/category of $\mathcal C$-valued sheaves on
$\mathcal X$ as follows.

\vspace{0.3cm}

\noindent \HandCuffRight\, {\scshape \bfseries
\Cref{thm:faisceaux_dans_un_logos}. ---} \emph{Let $\mathcal X$ be an
exponentiable $\infty$\=/topos, and let $\mathcal C$ be an $\infty$\=/logos.
Then there exists a finitely cocomplete subcategory $D
\subset \mathcal S\mathrm{h}(\mathcal X)$ and a bimodule $w : D^\mathrm{op}
\times D \to \mathcal S$ such that the $\infty$-category of $\mathcal C$-valued
sheaves is equivalent to the $\infty$\=/category of left exact
functors $F : D^\mathrm{op} \to \mathcal C$ satisfying the coend condition:
\[
	F(a) \simeq \int_{b\thinspace \in\thinspace D} w(a,b) \otimes F(b)\thinspace ,
	\quad \text{for all } a \in D\thinspace .
\]
}

Such a description is what we call \emph{$\omega$\=/continuous sheaves}.
In fact, one of the first definitions of sheaves on a topological space $X$
involved Abelian groups associated to \emph{compact subsets} of $X$. A sheaf
was then a functor
$\mathcal F : K \mapsto \mathcal F(K)$
commuting to \emph{finite limits} and specific \emph{filtered colimits}%
~\cite[`faisceaux continus' in the chapter by
\textsc{Houzel}]{doi:10.1007/978-3-662-02661-8}. Namely, a sheaf had to
satisfy the additional condition:
\[
	\mathcal F(K) \simeq \underset{K \ll K'}{\varinjlim}\thinspace
	\mathcal F(K') \thinspace,
\]
where $K \ll K'$ means that there exists an open subset $U$ such that
$K \subset U \subset K'$.
A proof of the equivalence between sheaves on $X$ and
$\omega$\=/continuous sheaves on $X$ is in HTT%
~\cite[Theorem 7.3.4.9]{doi:10.1515/9781400830558}, where they are called
$\mathcal K$-sheaves by
\textsc{Lurie}.

\subsection{Conventions on sizes}

Let $\omega \in \mathbb U \in \mathbb V \in \mathbb W$ be
three Grothendieck universes. To avoid heavy notations,
we establish a dictionary: small means $\mathbb U$-small,
large means $\mathbb V$-small and very large means
$\mathbb W$-small.
By a limit or a colimit, we mean a small one.
By a category or an $\infty$\=/category, we mean a large one.

The large $\infty$\=/category of small spaces will be denoted
$\mathcal S$; its homotopy category is $\mathcal H$. The very large
$\infty$\=/category of large spaces is $\widehat{\mathcal S}$, with
homotopy category $\widehat{\mathcal H}$.
The large $\infty$\=/category of small $\infty$\=/categories is
$\mathcal C\mathrm{at}$; the very large one of
large $\infty$\=/categories will be denoted $\widehat{\mathcal
C\mathrm{at}}$.

\subsection{Acknowledgements}

This work was supported by IBS-$R008$-$D1$.

\section{\texorpdfstring{$\infty$\=/toposes}{Infinity-toposes}}

A standard reference on $\infty$\=/toposes is
HTT~\cite{doi:10.1515/9781400830558}. The reader may also have a look
at \emph{Toposes and homotopy toposes}~\cite{rezk2005toposes} and
\emph{Homotopical algebraic geometry I\@: topos theory}~%
\cite{doi:10.1016/j.aim.2004.05.004}.

\subsection{Definitions}

In this paragraph we recall the definition of an $\infty$\=/topos and
introduce the terminology of $\infty$\=/logoses.

\begin{definition}
We shall say that an $\infty$\=/category $\mathcal L$ is an $\infty$\=/logos
if there exists a small $\infty$\=/category $D$ and an accessible
left exact and
reflective localisation $\mathcal P(D) \to \mathcal L$.

The very large $\infty$\=/category $\mathcal L\mathrm{og}$ of
$\infty$\=/logoses is the non-full subcategory of
$\widehat{\mathcal C\mathrm{at}}$ whose objects are the
$\infty$\=/logoses and the morphisms are the left exact
and cocontinuous functors.
For $\mathcal C$ and $\mathcal D$ two $\infty$\=/categories, the
$\infty$\=/category of left exact and cocontinuous functors shall be denoted
${[\mathcal C, \mathcal D]}^\mathrm{lex}_\mathrm{cc}$.
\end{definition}

\begin{definition}
The very large $\infty$\=/category of $\infty$\=/toposes is
defined by:
\[
	\mathcal T\mathrm{op} = \mathcal L\mathrm{og}^\mathrm{op} \thinspace.
\]
The isomorphism sends an $\infty$\=/topos $\mathcal X$ to its
$\infty$-logos $\mathcal S\mathrm{h}(\mathcal X)$; a morphism
$f : \mathcal X \to \mathcal Y$ is sent to $f^\ast : \mathcal
S\mathrm{h}(\mathcal Y) \to \mathcal S\mathrm{h}(\mathcal X)$.
\end{definition}

\begin{remark}
Manipulating $\infty$\=/topos usually requires many back and forth between
the $\infty$\=/category of $\infty$\=/toposes and its opposite. Distinguishing
by names and notations the two $\infty$\=/categories helps avoiding confusion,
especially between the various types of morphisms.

Distinguishing between a category and its opposite is not new: the
category of affine schemes is the opposite category of the category of rings;
there the equivalence is denoted $A \mapsto \mathrm{Spec}(A)$.
In the same way the category of locales is the opposite category of the
category of frames and the equivalence is denoted $X \mapsto \mathcal O(X)$.

Furthermore, there is a useful analogy between $\infty$\=/logoses and
commutative rings
extending the analogy between colimts and sums; limits and products;
cocomplete categories and Abelian groups.
\end{remark}

\begin{definition}
Let $D$ be a small category. Let $\overline{D}$ be the free
category generated by $D$ by finite limits i.e ${(\overline{D})}^\mathrm{op}$ is
the smallest subcategory in $\mathcal P(D^\mathrm{op})$ containing
$D^\mathrm{op}$ and closed under finite colimits. 
We shall call $\mathcal S[D] = \mathcal P(\overline{D})$
the free $\infty$\=/logos generated by $D$. 
\end{definition}

\begin{proposition}[(Universal property of free $\infty$-logoses)]
Let $D$ be a small $\infty$\=/category and $\mathcal C$ be an
$\infty$\=/logos.  Let $i : D \to \mathcal S[D]$ be the inclusion functor.
Then the restriction functor $i^\ast$
induces an equivalence between the $\infty$\=/category of cocontinuous left
exact functors $\mathcal S[D] \to \mathcal C$ and the $\infty$\=/category of
functors $D \to \mathcal C$.
\end{proposition}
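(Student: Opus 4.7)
The plan is to factor the restriction $i^\ast$ through the intermediate $\infty$-category $\overline{D}$, so that $i$ decomposes as $D \xhookrightarrow{j} \overline{D} \xhookrightarrow{y} \mathcal{P}(\overline{D}) = \mathcal{S}[D]$, where $j$ is the canonical inclusion into the free finite completion and $y$ is the Yoneda embedding. Then I would establish the equivalence in two steps corresponding to these two embeddings, and compose them.

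First, I would invoke the universal property of the free cocompletion: restriction along the Yoneda embedding $y : \overline{D} \to \mathcal{P}(\overline{D})$ induces an equivalence between cocontinuous functors $\mathcal{P}(\overline{D}) \to \mathcal{C}$ and arbitrary functors $\overline{D} \to \mathcal{C}$. Inside this equivalence, I would then identify the subcategory of those cocontinuous functors that are additionally left exact. The key observation here is that $y$ itself is left exact (since $\overline{D}$ has finite limits and they are computed pointwise in presheaves), and that every object of $\mathcal{P}(\overline{D})$ is a filtered colimit of finite limits of representables; since any $\infty$-logos $\mathcal{C}$ has filtered colimits commuting with finite limits, a cocontinuous extension $\widetilde{F} : \mathcal{P}(\overline{D}) \to \mathcal{C}$ is left exact precisely when its restriction $F : \overline{D} \to \mathcal{C}$ is left exact. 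This gives an equivalence
\[
{[\mathcal{P}(\overline{D}), \mathcal{C}]}^{\mathrm{lex}}_{\mathrm{cc}} \;\simeq\; {[\overline{D}, \mathcal{C}]}^{\mathrm{lex}}.
\]

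Second, by the very definition of $\overline{D}$ as the free finite completion of $D$, restriction along $j : D \to \overline{D}$ induces an equivalence between left exact functors $\overline{D} \to \mathcal{C}$ and arbitrary functors $D \to \mathcal{C}$, using that $\mathcal{C}$ has finite limits. Composing this with the equivalence of the previous paragraph yields the desired universal property.

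The main obstacle I expect is the passage between left exactness at the level of $\overline{D}$ and at the level of $\mathcal{P}(\overline{D})$; all the rest follows from standard universal properties. This step is the one that genuinely uses the hypothesis that $\mathcal{C}$ is an $\infty$-logos (rather than merely a presentable, finitely complete $\infty$-category), because the commutation of finite limits with filtered colimits in $\mathcal{C}$ is exactly what guarantees that checking left exactness on representables is enough. Once this is isolated, the proof reduces to a clean two-step composition of well-known universal properties.
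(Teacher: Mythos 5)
Your decomposition $D \hookrightarrow \overline{D} \hookrightarrow \mathcal P(\overline{D})$ and the two-step factorisation of the equivalence are exactly the paper's strategy, and the step identifying ${[\overline{D}, \mathcal C]}^{\mathrm{lex}}$ with $[D, \mathcal C]$ via the universal property of the free finite-limit completion is fine. The gap is in your justification of the other step, the equivalence ${[\mathcal P(\overline{D}), \mathcal C]}^{\mathrm{lex}}_{\mathrm{cc}} \simeq {[\overline{D}, \mathcal C]}^{\mathrm{lex}}$. You claim that every object of $\mathcal P(\overline{D})$ is a filtered colimit of finite limits of representables. Since $\overline{D}$ has finite limits and the Yoneda embedding preserves them, a finite limit of representables is again representable, so your claim amounts to saying that every presheaf on $\overline{D}$ is a filtered colimit of representables; that characterises $\mathrm{Ind}(\overline{D})$, which is a proper subcategory of $\mathcal P(\overline{D})$ (a coproduct of two representables already has a disconnected, hence non-filtered, category of elements). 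Your argument therefore only establishes that the cocontinuous extension $\widetilde F$ is left exact on $\mathrm{Ind}(\overline{D})$, which is strictly weaker than what is needed. Reading your claim with ``finite colimits'' in place of ``finite limits'' makes the decomposition true but does not repair the argument, since you would then need to push a finite limit past a finite colimit, which commutation of filtered colimits with finite limits does not provide.

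Relatedly, your diagnosis of where the logos hypothesis enters is not quite right: left exactness of filtered colimits in $\mathcal C$ is not sufficient to detect left exactness of a cocontinuous functor $\mathcal P(\overline{D}) \to \mathcal C$ on representables, because general presheaves are generated from representables under arbitrary, not merely filtered, colimits. What is actually needed is the descent/universality-of-colimits part of the Giraud axioms for $\mathcal C$. The correct statement -- that the left Kan extension along the Yoneda embedding of a left exact functor from a finitely complete small $\infty$-category into an $\infty$-logos is again left exact -- is Proposition 6.1.5.2 of HTT, and this is precisely the fact the paper invokes (without proof) when it asserts that ``left Kan extensions of left exact $\mathcal C$-valued functors are still left exact.'' Citing that proposition, or reproducing its proof, is what is required to close the gap.
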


\begin{proof}
Since $\overline{D}$ is the free $\infty$\=/category with finite limits
generated by $D$ and left Kan extensions of left exact $\mathcal C$-valued
functors are still left exact, we have the natural equivalences of
$\infty$\=/categories:
\[
	[D, \mathcal C] \simeq {\left[\overline{D},
	\mathcal C\right]}^{\thinspace\mathrm{lex}}
	\simeq {\left[\mathcal P(\overline{D}),
	\mathcal C\right]}^{\thinspace\mathrm{lex}}_{\thinspace\mathrm{cc}}
	\thinspace ,
\]
induced by the inclusions $D \subset \overline{D}
\subset \mathcal P(\overline{D})$.
\end{proof}

\begin{proposition}
An $\infty$\=/category $\mathcal E$ is an $\infty$\=/logos
if and only if it is a left exact and accessible reflective localisation of a
free $\infty$-logos:
\[
	\begin{tikzcd}
		\mathcal S[D] \arrow[r,"L", shift left] & \mathcal E
		\arrow[l, shift left] \thinspace,
	\end{tikzcd}
\]
that is $L$ is a left exact left adjoint and its right adjoint is fully
faithful and accessible.
\end{proposition}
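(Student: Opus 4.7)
The plan is to prove both directions of the \emph{if and only if} separately. The reverse implication is essentially tautological: if $\mathcal E$ is a left exact accessible reflective localisation of $\mathcal S[D] = \mathcal P(\overline D)$ for some small $D$, then because $\overline D$ is itself a small $\infty$\=/category (being generated from $D$ by finite limits), taking $D' = \overline D$ directly exhibits $\mathcal E$ as a left exact accessible reflective localisation of $\mathcal P(D')$, hence as an $\infty$\=/logos by definition.

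For the forward direction, the plan is to factor the localisation datum through $\mathcal P(D)$. Starting from a left exact accessible reflective localisation $L_0 \colon \mathcal P(D) \to \mathcal E$ with fully faithful right adjoint $R_0$ guaranteed by the definition of $\infty$\=/logos, the key step is to express $\mathcal P(D)$ itself as a left exact accessible reflective localisation of $\mathcal S[D] = \mathcal P(\overline D)$. For this I would use the fully faithful inclusion $i \colon D \hookrightarrow \overline D$, which induces by left and right Kan extension the adjoint triple $i_! \dashv i^\ast \dashv i_\ast$ between $\mathcal P(D)$ and $\mathcal P(\overline D)$. The restriction functor $i^\ast$ preserves both limits and colimits since both are computed pointwise in $\mathcal S$, so in particular it is left exact and accessible; moreover its right adjoint $i_\ast$ is fully faithful because $i$ is. This exhibits $\mathcal P(D)$ as a left exact accessible reflective localisation of $\mathcal S[D]$ as required.

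The composite $L := L_0 \circ i^\ast \colon \mathcal S[D] \to \mathcal E$ is then left exact, cocontinuous, and accessible as a composite of such, and its right adjoint $i_\ast \circ R_0$ is fully faithful as a composite of fully faithful functors, proving that $\mathcal E$ is a left exact accessible reflective localisation of the free $\infty$\=/logos $\mathcal S[D]$. The only technical points worth justifying explicitly are the existence and left exactness of the restriction functor $i^\ast$ inside the adjoint triple in the $\infty$\=/categorical setting, and the fact that the composite of two accessible reflective localisations with left exact reflectors is again of the same form; both reduce to standard manipulations with presheaves and adjunctions.
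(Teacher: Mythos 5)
Your proof is correct and follows essentially the same route as the paper: both exhibit $\mathcal P(D)$ as a left exact accessible reflective localisation of $\mathcal S[D]=\mathcal P(\overline D)$ via the restriction functor along $D\hookrightarrow\overline D$ (the paper's functor $T$ is exactly your $i^\ast$) and then compose with the defining localisation $\mathcal P(D)\to\mathcal E$, the converse being immediate since $\overline D$ is small. If anything, your identification of the fully faithful right adjoint as the right Kan extension $i_\ast$ is slightly more careful than the paper's phrasing, which describes it as the left extension of the inclusion $D\hookrightarrow \mathcal S[D]$ (that functor is $i_!$, the \emph{left} adjoint of $i^\ast$; both are fully faithful since $i$ is, so nothing is at stake).
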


\begin{proof}
By definition an $\infty$\=/logos $\mathcal E$ is a left exact and accessible
reflective
localisation of a presheaf $\infty$\=/category $L : \mathcal P(D) \to
\mathcal E$ with $D$ a small $\infty$\=/category. The proposition we want to
prove is just a slight variation.  Indeed for any small $\infty$\=/category
$D$, the Yoneda embedding $D \hookrightarrow \mathcal P(D)$ extends to a left
exact and cocontinuous functor $T : \mathcal S[D] \to \mathcal P(D)$. Its right
adjoint is the left extension of the inclusion $D \hookrightarrow \mathcal
P(\overline{D}) = \mathcal S[D]$, it is accessible and fully faithful and $L T
: \mathcal S[D] \to \mathcal E$ is the desired reflective localisation.
\end{proof}

\subsection{Affine \texorpdfstring{$\infty$-toposes}{infinity-toposes}}

The category of commutative rings is generated under colimits by free
rings $\mathbb Z[x_1, \ldots, x_n]$, hence the category affine schemes
is generated under limits by the affine spaces
$\mathbb A^n$. We wish to prove the analogue statement for $\infty$\=/toposes. 

\begin{definition}
An affine $\infty$-topos is an $\infty$\=/topos
$\mathcal X$ such that $\mathcal S\mathrm{h}(\mathcal X)$ is a free
$\infty$-logos. Let $\mathcal A\mathrm{ff}$ be the full
subcategory of $\mathcal T\mathrm{op}$ whose objects are the affine
$\infty$\=/toposes.

We let $\mathbb A^D$ be the affine $\infty$-topos with $\infty$\=/logos
$\mathcal S[D]$, for a small $\infty$\=/category $D$. For convenience,
we also let $\mathbb A$ be the affine $\infty$-topos $\mathbb A^\ast$; its
$\infty$-logos will be denoted $\mathcal S[X]$.
\end{definition}

\begin{proposition}%
\label{AffGen}
The $\infty$\=/category $\mathcal T\mathrm{op}$ is
generated under pullbacks by affine $\infty$-toposes.
\end{proposition}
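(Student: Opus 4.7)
The plan is to dualise and work in $\mathcal L\mathrm{og}$, where the statement becomes: every $\infty$\=/logos can be obtained from free $\infty$\=/logoses by a single pushout. Equivalently, every $\infty$\=/topos fits into a pullback square of affine $\infty$\=/toposes.

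By the previous proposition, any $\infty$\=/logos $\mathcal E$ admits an accessible, left exact, reflective localisation $L : \mathcal S[D] \to \mathcal E$ from a free $\infty$\=/logos. Accessibility provides a small set of arrows $\{f_i : A_i \to B_i\}_{i \in I}$ in $\mathcal S[D]$ such that $L$ is the universal morphism out of $\mathcal S[D]$ in $\mathcal L\mathrm{og}$ inverting every $f_i$. By the universal property of free $\infty$\=/logoses, coproducts of free $\infty$\=/logoses are themselves free, namely $\mathcal S[D] \sqcup \mathcal S[D'] \simeq \mathcal S[D \sqcup D']$; in particular the family $\{f_i\}_{i \in I}$ is classified by a single morphism $\mathcal S[I \times \Delta^1] \to \mathcal S[D]$ in $\mathcal L\mathrm{og}$. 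Pairing this with the collapsing morphism $\mathcal S[I \times \Delta^1] \to \mathcal S[I]$ induced by $\Delta^1 \to \Delta^0$, we obtain a span of free $\infty$\=/logoses, and I would identify $\mathcal E$ with the resulting pushout
\[
    \mathcal E' \;=\; \mathcal S[D] \sqcup_{\mathcal S[I \times \Delta^1]} \mathcal S[I] \thinspace .
\]
Dually, this exhibits the $\infty$\=/topos corresponding to $\mathcal E$ as a pullback $\mathbb A^D \times_{\mathbb A^{I \times \Delta^1}} \mathbb A^{I}$ of affines.

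The main obstacle is verifying the equivalence $\mathcal E' \simeq \mathcal E$. Both objects corepresent the same functor on $\mathcal L\mathrm{og}$, namely the functor sending $\mathcal F$ to the space of left exact cocontinuous functors $\mathcal S[D] \to \mathcal F$ that invert every $f_i$. For $\mathcal E$ this is the universal property of the left exact accessible localisation $L$. For $\mathcal E'$ it reduces to the key observation that, as $\infty$\=/categories, the point $\Delta^0$ is equivalent to the walking isomorphism (a contractible groupoid on two objects); consequently the morphism $\mathcal S[\Delta^1] \to \mathcal S[\Delta^0]$ in $\mathcal L\mathrm{og}$ is universal among morphisms making the universal arrow of $\mathcal S[\Delta^1]$ invertible, since the space of invertible arrows in any $\infty$\=/category is equivalent, via $f \mapsto \mathrm{source}(f)$, to its space of objects. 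Matching these universal properties identifies $\mathcal E'$ with $\mathcal E$ and concludes the proof.
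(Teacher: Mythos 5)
Your proof is correct and follows essentially the same route as the paper: present $\mathcal E$ as an accessible left exact localisation of a free $\infty$\=/logos, use small generation of the strongly saturated class to extract a small set of arrows, and exhibit $\mathcal E$ as a pushout of free $\infty$\=/logoses along the map classifying those arrows. The only (cosmetic) difference is that the paper pushes out against $\mathcal S\bigl[\coprod_{S_0} J\bigr]$ with $J$ the walking isomorphism, whereas you use $\mathcal S[I]$ and justify it by the equivalence $J \simeq \Delta^0$ of $\infty$\=/categories — which is exactly why the two squares agree.
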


\begin{proof}
We are going to prove the dual statement that the
$\infty$\=/category $\mathcal L\mathrm{og}$ is generated under pushouts by the
free $\infty$\=/logoses. For any $\infty$-logos
$\mathcal E$, there exists a free $\infty$-logos $\mathcal S[D]$
and a left exact and accessible reflective localisation functor
$L : \mathcal S[D] \to \mathcal S\mathrm{h}(\mathcal X)$.

Let $S$ be the set of morphisms $f$ in $\mathcal S[D]$
such that $L(f)$ is an equivalence in $\mathcal E$, then $S$ is strongly
saturated. Because both $\mathcal
S[D]$ and $\mathcal E$ are accessible $\infty$\=/categories, by proposition
5.5.4.2 in HTT%
~\cite{doi:10.1515/9781400830558}
there exists a small subset
$S_0 \subset S$ such that $S_0$ generates $S$ as a strongly saturated class.

We can now identify $\mathcal E$ as $S_0^{-1} \mathcal S[D]$. Let $J$
be the $\infty$\=/category generated by two objects and one
invertible arrow. We then obtain the following pushout in the
$\infty$\=/category $\mathcal L\mathrm{og}$:
\[
	\begin{tikzcd}
		\mathcal S\left[\underset{S_0}{\coprod}~\Delta^1\right]
		\arrow[r] \arrow[d] \arrow[rd, phantom, "\ulcorner", very near end] 
		& \mathcal S[D] \arrow[d] \\
		\mathcal S\left[\underset{S_0}{\coprod}~J\right]
		\arrow[r] & \mathcal E \thinspace.
	\end{tikzcd}
\]
This ends the proof that any $\infty$-logos is a poushout of free
$\infty$\=/categories of sheaves: morphisms $f^\ast : \mathcal
S\mathrm{h}(\mathcal X) \to \mathcal S\mathrm{h}(\mathcal Y)$ are canonically
equivalent to morphisms $ g^\ast : \mathcal S[D] \to \mathcal
S\mathrm{h}(\mathcal Y)$ such that $g^\ast(s)$ is invertible for any $s \in
S_0$.
\end{proof}

\subsection{Tensor product of
\texorpdfstring{$\infty$\=/categories}{infinity-categories}}

We  gather useful facts from chapter 5.5 of
HTT~\cite{doi:10.1515/9781400830558}, 1.4 and
4.8 of \emph{Higher Algebra}~\cite{lurie2017higher} on tensor products of
$\infty$\=/categories and show that the
coproduct of $\infty$\=/logoses is given by the tensor product of
the underlying cocomplete $\infty$\=/categories.

\begin{theorem}[{\cite[Corollary 4.8.1.4]{lurie2017higher}}]
The very large $\infty$\=/category
$\widehat{\mathcal C\mathrm{at}}_\mathrm{cc}$
of cocomplete $\infty$\=/categories and cocontinuous functors
has a closed symmetric monoidal structure $\otimes$ such that cocontinuous
functors $\mathcal C \otimes \mathcal D \to \mathcal E$ canonically correspond
to functors $\mathcal C \times \mathcal D \to \mathcal E$ cocontinuous in each
variable.

The unit object of $\otimes$ is the cocomplete $\infty$\=/category $\mathcal S$.
\end{theorem}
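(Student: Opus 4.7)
The plan is to define $\otimes$ by the stated universal property, construct a representing object, then assemble the data into a coherent symmetric monoidal structure and identify the internal hom. For the construction, I would represent $\mathcal C \otimes \mathcal D$ as a full subcategory of $\mathrm{Fun}((\mathcal C \times \mathcal D)^{\mathrm{op}}, \widehat{\mathcal S})$ spanned by those functors which send colimits in each variable separately to limits; equivalently, as a suitable localisation of $\mathcal P(\mathcal C \times \mathcal D)$ forcing the Yoneda image to become bivariantly cocontinuous. This explicitly identifies cocontinuous $F : \mathcal C \otimes \mathcal D \to \mathcal E$ with functors $\mathcal C \times \mathcal D \to \mathcal E$ that are cocontinuous in each variable.

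Next I would verify that $\mathcal S$ is the unit. The universal property of $\mathcal S = \mathcal P(\ast)$ as the free cocomplete $\infty$\=/category on one object gives $\mathrm{Fun}^\mathrm{cc}(\mathcal S, \mathcal E) \simeq \mathcal E$; combining this with the bivariant universal property above yields $\mathcal S \otimes \mathcal D \simeq \mathcal D$, naturally in $\mathcal D$. For symmetry, the defining condition on a bivariant cocontinuous functor is manifestly symmetric in its two variables. For associativity, both iterated tensor products $(\mathcal A \otimes \mathcal B) \otimes \mathcal C$ and $\mathcal A \otimes (\mathcal B \otimes \mathcal C)$ corepresent the same functor, namely cocontinuous functors out into $\mathcal E$ corresponding to trivariantly cocontinuous functors $\mathcal A \times \mathcal B \times \mathcal C \to \mathcal E$, so the comparison map is an equivalence.

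For the closed structure, I would take the internal hom to be the $\infty$\=/category $[\mathcal D, \mathcal E]^\mathrm{cc}$ of cocontinuous functors, which is cocomplete because colimits are computed pointwise in the functor category and pointwise colimits of cocontinuous functors remain cocontinuous. The adjunction $[\mathcal C \otimes \mathcal D, \mathcal E]^\mathrm{cc} \simeq [\mathcal C, [\mathcal D, \mathcal E]^\mathrm{cc}]^\mathrm{cc}$ then follows from the universal property by currying.

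The genuine difficulty is step three in spirit: upgrading these object\=/level equivalences (unit, associator, symmetry) into a coherent symmetric monoidal $\infty$\=/category, i.e.\ a coCartesian fibration over $\mathcal{F}\mathrm{in}_\ast$. The cleanest route is the operadic one: observe that the presheaf construction $\mathcal P : \widehat{\mathcal C\mathrm{at}} \to \widehat{\mathcal C\mathrm{at}}_\mathrm{cc}$ is strong monoidal from the cartesian structure to the tensor structure via $\mathcal P(\mathcal C \times \mathcal D) \simeq \mathcal P(\mathcal C) \otimes \mathcal P(\mathcal D)$, and that every cocomplete $\infty$\=/category is a reflective cocontinuous localisation of some $\mathcal P(D)$; transporting the cartesian symmetric monoidal structure through this adjunction and descending along the localisations produces the desired $\infty$\=/operadic structure. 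All of this coherence bookkeeping is what is carried out in HA Chapter 4.8.1, and I would cite that verification rather than reprove it.
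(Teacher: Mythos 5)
This theorem is imported from \emph{Higher Algebra}: the paper gives no proof of its own, only the citation to Corollary 4.8.1.4 followed by an informal recollection of how the construction goes, and your proposal follows essentially that same route (realise the tensor product inside bivariantly continuous presheaves, check the unit and the currying adjunction, and defer the operadic coherence to HA 4.8.1). In spirit you and the paper are doing the same thing.

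One step of your construction needs correcting, and it is precisely the point the paper takes care over. You propose to take $\mathcal C \otimes \mathcal D$ to be the \emph{full} subcategory of $\left[(\mathcal C \times \mathcal D)^{\mathrm{op}}, \widehat{\mathcal S}\,\right]$ spanned by functors sending colimits in each variable to limits, i.e.\ the entire localisation. The paper (following Lurie) instead takes the \emph{smallest cocomplete subcategory of that localisation containing the image of the Yoneda embedding}. The distinction matters for the universal property: a cocontinuous functor out of $\mathcal C \otimes \mathcal D$ must be determined by, and freely extendable from, its restriction to the pure tensors $c \otimes d$; this holds on the colimit-closure of the Yoneda image but can fail on the full localisation, which need not be generated under small colimits by representables when $\mathcal C$ and $\mathcal D$ are merely cocomplete rather than presentable. (The identification of the tensor product with the full category of continuous functors $\left[\mathcal C^{\mathrm{op}}, \mathcal D\right]^{\mathrm{c}}$ is the content of the separate Proposition 4.8.1.17, which requires presentability.) Relatedly, mind the universe bookkeeping: the ambient presheaf category is $\widehat{\mathcal S}$-valued and very large, ``cocomplete'' means closed under small colimits only, and it is the colimit-closure step that returns an object of the correct size. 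With that correction your outline matches the paper's.
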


Given $\mathcal A$, $\mathcal B$, $\mathcal C$ three $\infty$\=/categories,
we shall denote by ${[\mathcal A, \mathcal B]}_\mathrm{cc}$ the
$\infty$\=/category
of cocontinuous functors from $\mathcal A$ to $\mathcal B$. Like wise
${[\mathcal A, \mathcal B]}^\mathrm{c}$ is the $\infty$\=/category of
continuous functors and
${[\mathcal A \times \mathcal B, \mathcal C]}^\mathrm{c,c}$
is the $\infty$\=/category of functors that are continuous in each variable.

It will be important to understand how $\mathcal C \otimes \mathcal D$ is build
as we need these technical details for future proofs. The basic idea
is to force the commutation $c \otimes
\varinjlim d_i \simeq \varinjlim (c \otimes d_i)$ and then to add all the
colimits of `pure tensor' $c \otimes d$. Because $\mathcal C$ and $\mathcal
D$ are large, they are $\mathbb V$-small so we get a reflective localisation
functor:
\[
	\left[{(\mathcal C \times \mathcal D)}^\mathrm{op},
	\widehat{\mathcal S}\thinspace\right] \longrightarrow
	{\left[{(\mathcal C
	\times \mathcal D)}^\mathrm{op},
	\widehat{\mathcal S}\thinspace\right]}^{\thinspace\mathrm{c,c}}\thinspace .
\]
By composition with the Yoneda embedding, we get a functor:
\[
	\mathcal C \times \mathcal D \to
	{\left[{(\mathcal C \times \mathcal D)}^\mathrm{op},
	\widehat{\mathcal S}\thinspace\right]}^{\thinspace\mathrm{c,c}} \thinspace ,
\]
which is cocontinuous in each variable. The tensor product $\mathcal
C \otimes \mathcal D$ is then the smallest cocomplete subcategory of
${\left[{(\mathcal C \times \mathcal D)}^\mathrm{op},
\widehat{\mathcal S}\thinspace\right]}^{\thinspace\mathrm{c,c}}$ that contains
the image of the Yoneda embedding.

\begin{theorem}[{\cite[Remark 4.8.1.18]{lurie2017higher}}]%
\label{thm:pr_est_symetrique_fermee}
Let $\mathcal C$ and $\mathcal D$ be two presentable
$\infty$\=/categories, then $\mathcal C \otimes \mathcal D$ is
presentable.  Moreover ${\left[\mathcal C, \mathcal D\right]}_\mathrm{cc}$ is
also presentable, so that $\mathcal P\mathrm{res}$, the large
$\infty$\=/category of presentable $\infty$\=/categories, inherits a closed
symmetric monoidal structure from
$\widehat{\mathcal C\mathrm{at}}_\mathrm{cc}$.
\end{theorem}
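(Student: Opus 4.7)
The plan is to bootstrap from the presheaf case, using that every presentable $\infty$\=/category arises as an accessible reflective localisation $L : \mathcal P(E) \to \mathcal C$ for some small $\infty$\=/category $E$, with the class of morphisms inverted by $L$ being strongly saturated and generated by a small subset, as in the proof of \Cref{AffGen}.

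For the presentability of $\mathcal C \otimes \mathcal D$, I would proceed in two steps. First I would establish the equivalence $\mathcal P(E) \otimes \mathcal P(F) \simeq \mathcal P(E \times F)$: by the universal property of the tensor product, both sides corepresent bi-cocontinuous functors out of $\mathcal P(E) \times \mathcal P(F)$ with values in an arbitrary cocomplete $\infty$\=/category, and by the freeness of $\mathcal P(-)$ such functors correspond in turn to arbitrary functors $E \times F \to \mathcal E$. Second, writing $\mathcal C$ and $\mathcal D$ as accessible reflective localisations of $\mathcal P(E)$ and $\mathcal P(F)$ at small sets of maps $S_0$ and $T_0$, I would identify $\mathcal C \otimes \mathcal D$ with the accessible reflective localisation of $\mathcal P(E \times F)$ at the small set of morphisms of the form $s \otimes \mathrm{id}_y$ and $\mathrm{id}_x \otimes t$, for $s \in S_0$, $t \in T_0$ and $x$, $y$ representables. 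The key input here is that $- \otimes \mathcal D : \widehat{\mathcal C\mathrm{at}}_\mathrm{cc} \to \widehat{\mathcal C\mathrm{at}}_\mathrm{cc}$ is itself a left adjoint by the closed symmetric monoidal structure, so it preserves reflective localisations.

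For the presentability of ${[\mathcal C, \mathcal D]}_\mathrm{cc}$, I would choose a regular cardinal $\kappa$ for which $\mathcal C$ is $\kappa$-accessible, identifying $\mathcal C$ with $\mathrm{Ind}_\kappa(\mathcal C^\kappa)$. Restriction along the inclusion $\mathcal C^\kappa \hookrightarrow \mathcal C$ then induces an equivalence between ${[\mathcal C, \mathcal D]}_\mathrm{cc}$ and the $\infty$\=/category of functors $\mathcal C^\kappa \to \mathcal D$ preserving $\kappa$\=/small colimits, with quasi-inverse given by left Kan extension. Since $\mathcal C^\kappa$ is essentially small and $\mathcal D$ is presentable, $[\mathcal C^\kappa, \mathcal D]$ is presentable, and the $\kappa$-small colimit preservation conditions cut out an accessible reflective localisation, hence the target remains presentable.

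The closed symmetric monoidal structure of $\mathcal P\mathrm{res}$ then follows formally from the one on $\widehat{\mathcal C\mathrm{at}}_\mathrm{cc}$: the canonical adjunction equivalence ${[\mathcal C \otimes \mathcal D, \mathcal E]}_\mathrm{cc} \simeq {[\mathcal C, {[\mathcal D, \mathcal E]}_\mathrm{cc}]}_\mathrm{cc}$ simply restricts, since all four $\infty$\=/categories involved have now been shown to be presentable. The main obstacle will be the second step of the tensor product argument: pinning down a small generating set for the localising class inside $\mathcal P(E \times F)$ and verifying that the resulting accessible reflective localisation really is $\mathcal C \otimes \mathcal D$, which requires unfolding the concrete construction of $\otimes$ as the smallest cocomplete subcategory of the bi-cocontinuous presheaf $\infty$\=/category generated by the image of the Yoneda embedding, as reviewed above the statement.
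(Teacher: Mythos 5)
The paper gives no proof of this statement: it is imported wholesale from \emph{Higher Algebra} (Remark 4.8.1.18, resting on HTT 5.5.3.8 for the presentability of ${[\mathcal C,\mathcal D]}_\mathrm{cc}$ and on HA 4.8.1.15--4.8.1.17 for the tensor product), so any comparison is really with Lurie's arguments rather than with the authors'. Your sketch is a faithful reconstruction of that standard route and is essentially correct: the identification $\mathcal P(E)\otimes\mathcal P(F)\simeq\mathcal P(E\times F)$ via universal properties is right, and the presentation of $\mathcal C\otimes\mathcal D$ as an accessible localisation of $\mathcal P(E\times F)$ at the small set $\{s\otimes\mathrm{id}_y,\ \mathrm{id}_x\otimes t\}$ is precisely the content of the result the paper quotes separately as \cref{TensOfLoc}. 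Two points deserve to be made explicit rather than flagged as obstacles. First, the passage from the small set to the large class $S\boxtimes T$ is not really about $-\otimes\mathcal D$ preserving reflective localisations in the abstract; the working argument is that $f(s\times -)$ is cocontinuous in the second variable, representables generate $\mathcal P(F)$ under colimits, and strongly saturated classes are stable under colimits in the arrow category, so the strongly saturated class generated by the small set already contains every $f(s\times b)$ (this is also where the small generating set of \cref{AffGen} enters). Second, your claim that the $\kappa$-small-colimit-preservation conditions cut out an accessible reflective localisation of $[\mathcal C^\kappa,\mathcal D]$ is true but is itself a statement of the same calibre as the one being proved; it needs either the adjoint functor theorem applied to the limit- and $\kappa$-filtered-colimit-closed full subcategory, or the identification of that subcategory with cocontinuous functors out of a localisation of $\mathcal P((\mathcal C^\kappa)^\mathrm{op})$. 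With those two steps spelled out, your argument is a complete and correct substitute for the citation.
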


\begin{proposition}[{\cite[Proposition 4.8.1.17]{lurie2017higher}}]%
\label{TensorProductFormula}
Let $\mathcal C$ and $\mathcal
D$ be two presentable $\infty$\=/categories, then
$\mathcal C \otimes \mathcal D
\simeq {\left[\mathcal C^\mathrm{op}, \mathcal D\right]}^\mathrm{c}$.
\end{proposition}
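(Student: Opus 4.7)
The plan is to identify both $\mathcal C \otimes \mathcal D$ and ${\left[\mathcal C^{\mathrm{op}}, \mathcal D\right]}^{\mathrm{c}}$ as subcategories of the same ambient functor category. Recall from the explicit description given just before the statement that $\mathcal C \otimes \mathcal D$ is the smallest cocomplete subcategory of
\[
	\mathcal B := {\left[{(\mathcal C \times \mathcal D)}^{\mathrm{op}}, \widehat{\mathcal S}\thinspace\right]}^{\thinspace\mathrm{c,c}}
\]
containing the image of the Yoneda embedding $\mathcal C \times \mathcal D \hookrightarrow \mathcal B$. My goal is to match this with the full subcategory ${\left[\mathcal C^{\mathrm{op}}, \mathcal D\right]}^{\mathrm{c}}$ sitting inside a suitable rewriting of $\mathcal B$.

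First I would use the exponential law together with the presentability of $\mathcal D$ to rewrite $\mathcal B$. A Fubini-style argument gives
\[
	\mathcal B \simeq {\left[\mathcal C^{\mathrm{op}}, {\left[\mathcal D^{\mathrm{op}}, \widehat{\mathcal S}\thinspace\right]}^{\mathrm{c}}\right]}^{\mathrm{c}}.
\]
For a presentable $\mathcal D$, the Yoneda embedding induces an equivalence $\mathcal D \simeq {\left[\mathcal D^{\mathrm{op}}, \widehat{\mathcal S}\thinspace\right]}^{\mathrm{c}}$, since every limit-preserving functor $\mathcal D^{\mathrm{op}} \to \widehat{\mathcal S}$ is representable by the adjoint functor theorem for presentable $\infty$\=/categories (HTT 5.5.2.2). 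Combining these gives $\mathcal B \simeq {\left[\mathcal C^{\mathrm{op}}, \mathcal D\right]}^{\mathrm{c}}$, and under this equivalence the Yoneda image of a pair $(c,d)$ becomes the continuous functor $c' \mapsto \mathrm{Map}_{\mathcal C}(c',c) \otimes d$, where $\otimes$ denotes the $\mathcal S$-tensoring of $\mathcal D$ arising from $\mathcal S \otimes \mathcal D \simeq \mathcal D$.

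It then remains to show that the smallest cocomplete subcategory of ${\left[\mathcal C^{\mathrm{op}}, \mathcal D\right]}^{\mathrm{c}}$ containing these elementary functors is all of it. I would argue that any continuous $F \colon \mathcal C^{\mathrm{op}} \to \mathcal D$ is a small colimit of functors of the form $\mathrm{Map}_{\mathcal C}(-,c) \otimes d$: presentability of $\mathcal D$ lets us resolve each value of $F$ as a colimit of $\mathcal S$-tensors, and presentability of $\mathcal C \otimes \mathcal D$ — guaranteed by \Cref{thm:pr_est_symetrique_fermee} — ensures these local resolutions assemble into a global one. The main obstacle is exactly this density step: the naive co-Yoneda formula does not automatically produce continuous functors, since the $\mathcal S$-tensoring is not continuous in its $\mathcal S$ variable, so one must exploit the concrete shape of $\mathcal C \otimes \mathcal D$ inside $\mathcal B$ rather than write down a bar construction by hand.
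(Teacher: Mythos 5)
A preliminary remark: the paper offers no proof of \cref{TensorProductFormula} at all --- it is imported verbatim from \emph{Higher Algebra} (Proposition 4.8.1.17) --- so there is no internal argument to compare yours against; I can only judge the proposal on its own terms, and it has genuine gaps. The first is the claimed equivalence $\mathcal D \simeq {[\mathcal D^{\mathrm{op}}, \widehat{\mathcal S}\thinspace]}^{\mathrm{c}}$. Proposition 5.5.2.2 of HTT gives representability of limit-preserving functors valued in \emph{small} spaces; for $\widehat{\mathcal S}$-valued functors it fails. For instance, for any large space $K$ the functor $\mathrm{Map}_{\widehat{\mathcal S}}(-,K)$ restricted to $\mathcal S^{\mathrm{op}}$ preserves small limits but is representable only when $K$ is essentially small, so ${[\mathcal S^{\mathrm{op}}, \widehat{\mathcal S}\thinspace]}^{\mathrm{c}}$ is strictly larger than $\mathcal S$. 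Consequently your identification $\mathcal B \simeq {[\mathcal C^{\mathrm{op}}, \mathcal D]}^{\mathrm{c}}$ is false: the ambient category is genuinely bigger than the target. This does not by itself kill the strategy, since $\mathcal C \otimes \mathcal D$ is only the cocomplete subcategory of $\mathcal B$ generated by the Yoneda image, but it leaves you needing to embed ${[\mathcal C^{\mathrm{op}}, \mathcal D]}^{\mathrm{c}}$ into $\mathcal B$ and to show that this image is closed under the colimits of $\mathcal B$ --- which are reflections of pointwise colimits, and a pointwise colimit of limit-preserving functors need not preserve limits --- neither of which is addressed.

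The second problem is that your description of the generators contradicts your own closing observation. You assert that the Yoneda image of $(c,d)$ becomes $c' \mapsto \mathrm{Map}_{\mathcal C}(c',c) \otimes d$; but the tensoring $(-) \otimes d : \mathcal S \to \mathcal D$ is cocontinuous, not continuous, so this functor does not preserve limits in $c'$ and is not even an object of ${[\mathcal C^{\mathrm{op}}, \mathcal D]}^{\mathrm{c}}$. Already for $\mathcal C = \mathcal D = \mathcal S$ the pure tensor $K \otimes L$ must correspond to $\mathrm{Map}(-, K \times L)$, which differs from $\mathrm{Map}(-,K) \times L$ (test at the initial object). Finally, the density step --- showing that the cocomplete subcategory generated by the pure tensors is everything --- is the entire mathematical content of the proposition, and it is only asserted: ``presentability of $\mathcal C \otimes \mathcal D$ ensures these local resolutions assemble into a global one'' is not an argument, as you yourself concede in the last sentence. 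So the proposal correctly identifies the two categories to be compared and a plausible ambient category in which to compare them, but it establishes neither the identification of the target inside $\mathcal B$ nor the generation statement.
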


\begin{proposition}[{\cite[Proof of prop. 4.8.1.15]{lurie2017higher}}]%
\label{TensOfLoc}
Let $\mathcal A$ and $\mathcal B$ be presentable
$\infty$\=/categories. Let $\mathcal A \to S^{-1}\mathcal A$ and
$\mathcal B \to T^{-1}\mathcal B$ be accessible and reflective localisations.
Let $f : \mathcal A \times
\mathcal B \to \mathcal A \otimes \mathcal B$ be the canonical map and denote
by $S \boxtimes T$ the set of arrows in $\mathcal A \otimes \mathcal B$ of the
form $f(s \times b)$ with $(s,b) \in S \times \mathcal B$
or $f(a \times t)$ with $(a,t) \in \mathcal A \times T$.
Then the localisation of $\mathcal A \otimes \mathcal B$
along
$S \boxtimes T$ exists, is reflective and accessible. In addition:
\[
{(S \boxtimes T)}^{-1} \mathcal A \otimes \mathcal B \simeq (S^{-1}
\mathcal A) \otimes (T^{-1} \mathcal B) \ .
\]
\end{proposition}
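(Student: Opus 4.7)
The plan is to identify the universal properties of both sides, build the localisation $(S\boxtimes T)^{-1}(\mathcal A\otimes\mathcal B)$ as accessible and reflective by reducing to a small generating set of morphisms, and then match the two by the Yoneda lemma in $\mathcal P\mathrm{res}$.

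First I would unfold the universal property of the tensor product. For a presentable $\infty$-category $\mathcal E$, cocontinuous functors $F:\mathcal A\otimes\mathcal B\to\mathcal E$ are the same as bifunctors $\widetilde F:\mathcal A\times\mathcal B\to\mathcal E$ cocontinuous in each variable. For each fixed $b\in\mathcal B$ the cocontinuous functor $\widetilde F(-,b):\mathcal A\to\mathcal E$ factors through the reflective localisation $S^{-1}\mathcal A$ if and only if it inverts $S$; running the argument in both variables, cocontinuous functors $(S^{-1}\mathcal A)\otimes(T^{-1}\mathcal B)\to\mathcal E$ correspond canonically to cocontinuous functors $\mathcal A\otimes\mathcal B\to\mathcal E$ inverting every arrow in $S\boxtimes T$.

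Second I would exhibit $(S\boxtimes T)^{-1}(\mathcal A\otimes\mathcal B)$ as an accessible reflective localisation. Since $\mathcal A\to S^{-1}\mathcal A$ and $\mathcal B\to T^{-1}\mathcal B$ are accessible, proposition 5.5.4.2 of HTT (as used in the proof of \Cref{AffGen}) gives small subsets $S_0\subset S$ and $T_0\subset T$ whose strongly saturated closures recover $S$ and $T$; let $\mathcal A_0,\mathcal B_0$ be small generating sets of $\mathcal A,\mathcal B$ under colimits. I claim that the strongly saturated closure $\overline{W_0}$ of the small set
\[
W_0=\{f(s_0\times b_0):s_0\in S_0,\ b_0\in\mathcal B_0\}\cup\{f(a_0\times t_0):a_0\in\mathcal A_0,\ t_0\in T_0\}
\]
already contains $S\boxtimes T$. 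The argument is a double bootstrapping: for fixed $b\in\mathcal B_0$, the preimage of $\overline{W_0}$ under the cocontinuous functor $f(-\times b):\mathcal A\to\mathcal A\otimes\mathcal B$ is strongly saturated in $\mathcal A$ and contains $S_0$, hence contains $S$; then for fixed $s\in S$, the class $\{b\in\mathcal B:f(s\times b)\in\overline{W_0}\}$ is strongly saturated (same argument with $f(s\times-)$), contains $\mathcal B_0$, hence is all of $\mathcal B$; a symmetric bootstrapping handles $\mathcal A\boxtimes T$. HTT 5.5.4.15 applied to the small set $W_0$ then delivers the desired accessible reflective localisation.

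Combining the two steps, the accessible reflective localisation $(S\boxtimes T)^{-1}(\mathcal A\otimes\mathcal B)$ and the presentable $\infty$-category $(S^{-1}\mathcal A)\otimes(T^{-1}\mathcal B)$ (presentable by \Cref{thm:pr_est_symetrique_fermee}) represent the same functor on $\mathcal P\mathrm{res}$, so Yoneda yields the equivalence. The main obstacle is the double bootstrapping in the second step: one must carefully justify that the preimage of a strongly saturated class under a cocontinuous functor is again strongly saturated, and then weave together both variables to conclude that $\overline{W_0}$ really fills out the whole of $S\boxtimes T$ rather than settling somewhere strictly in between.
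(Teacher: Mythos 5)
The paper does not prove this proposition itself---it is quoted from the proof of Proposition 4.8.1.15 in Higher Algebra---and your argument is essentially a faithful reconstruction of that cited proof: same universal-property identification, same reduction to a small set of generating arrows, same conclusion by representability. The only wrinkle is terminological: in the second bootstrapping step the collection $\{b \in \mathcal B : f(s\times b) \in \overline{W_0}\}$ is a class of \emph{objects}, so ``strongly saturated'' does not apply; what you actually use is that it is a full subcategory closed under colimits (because $b \mapsto f(s \times \mathrm{Id}_b)$ is cocontinuous into the arrow category and strongly saturated classes are closed under colimits there), which contains the generators $\mathcal B_0$ and hence is all of $\mathcal B$; with that rephrasing, together with the observation that $W_0 \subset S \boxtimes T$ forces $\overline{W_0} = \overline{S \boxtimes T}$, the argument is complete.
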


As a direct consequence of the universal property of the tensor product, we
obtain the following corollary:

\begin{corollary}%
\label{PushoutTensor}
The following square is a pushout in
$\widehat{\mathcal C\mathrm{at}}_\mathrm{cc}$,
\[
	\begin{tikzcd}
		\mathcal A \otimes \mathcal B \arrow[r] \arrow[d]
		\arrow[rd, phantom, "\ulcorner", very near end]
		& S^{-1} \mathcal A \otimes \mathcal B \arrow[d] \\
		\mathcal A \otimes T^{-1} \mathcal B \arrow[r] 
		& (S^{-1} \mathcal A) \otimes (T^{-1} \mathcal B)\thinspace.
	\end{tikzcd}
\]
\end{corollary}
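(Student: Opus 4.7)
The plan is to verify the universal property of the pushout directly by testing against an arbitrary object $\mathcal D \in \widehat{\mathcal C\mathrm{at}}_\mathrm{cc}$. Concretely, I would show that the space of cocontinuous functors from $(S^{-1}\mathcal A) \otimes (T^{-1}\mathcal B)$ to $\mathcal D$ is naturally equivalent to the space of pairs of cocontinuous functors out of $S^{-1}\mathcal A \otimes \mathcal B$ and $\mathcal A \otimes T^{-1}\mathcal B$ that agree after restriction to $\mathcal A \otimes \mathcal B$.

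The first step is to unwind the mapping spaces using the universal property of the tensor product: cocontinuous functors $\mathcal A \otimes \mathcal B \to \mathcal D$ correspond to functors $\mathcal A \times \mathcal B \to \mathcal D$ cocontinuous in each variable. Using \cref{TensOfLoc}, cocontinuous functors out of $S^{-1}\mathcal A \otimes \mathcal B$, respectively $\mathcal A \otimes T^{-1}\mathcal B$, correspond to such bilinear functors that additionally invert every arrow of the form $s \times b$ with $s \in S$, respectively $a \times t$ with $t \in T$. Under these identifications, the restriction morphisms to the common corner $\mathcal A \otimes \mathcal B$ become the inclusions of full subcategories cut out by each of these invertibility conditions.

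Next, I would apply \cref{TensOfLoc} once more to the target: cocontinuous functors $(S^{-1}\mathcal A) \otimes (T^{-1}\mathcal B) \to \mathcal D$ correspond precisely to bilinear functors inverting both families $S \boxtimes \mathcal B$ and $\mathcal A \boxtimes T$ simultaneously. This is manifestly the intersection of the two subcategories of bilinear functors described in the previous step, hence the pullback of the two restriction maps. Since all identifications are equivalences of $\infty$-categories, testing against every $\mathcal D$ exhibits $(S^{-1}\mathcal A) \otimes (T^{-1}\mathcal B)$ as the pushout of the given span.

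The only subtle point, and the one I expect to require the most care, is arguing that the three applications of the tensor product's universal property fit into a compatible commutative square of restriction maps, so that the pullback computed on the side of bilinear functors actually coincides with the pullback of mapping spaces in $\widehat{\mathcal C\mathrm{at}}_\mathrm{cc}$. This amounts to the naturality of the construction of $\mathcal C \otimes \mathcal D$ recalled just before the statement, and requires no additional calculation.
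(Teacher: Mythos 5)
Your argument is correct and is essentially the paper's intended one: the corollary is stated there as a direct consequence of \cref{TensOfLoc} together with the universal properties of the tensor product and of localisation, which is precisely what you spell out by identifying each mapping space with a space of bilinear functors subject to invertibility conditions and observing that the pullback of the two $(-1)$\=/truncated inclusions is the intersection, i.e.\ the functors inverting all of $S \boxtimes T$. No gap; the ``subtle point'' you flag is handled exactly as you suggest, by the naturality of the identifications.
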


For the next theorem, we shall need a lemma that can be found in the
online \emph{corrected version} of HTT.

\begin{lemma}[{\cite[Lemma 6.3.3.4]{htt2017april}}]%
\label{thm:intersection_de_localisations_cc_lex}
Let $\mathcal L$ be an $\infty$\=/logos and let $F, G : \mathcal L \to
\mathcal L$ be two accessible and left exact localisation functors.
Then the intersection
$F\mathcal L \cap G\mathcal L$ is a left exact and accessible localisation
of $\mathcal L$.
\end{lemma}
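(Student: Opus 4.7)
The plan is to realise the desired localization as the inversion of a small set of maps, and then to prove left exactness through a transfinite iteration of $F$ and $G$.

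First, since $F$ and $G$ are accessible, the strongly saturated classes of $F$-equivalences and $G$-equivalences are generated by small subsets $S_F$ and $S_G$ of arrows of $\mathcal L$ (HTT, Proposition 5.5.4.2). The union $S = S_F \cup S_G$ is small, and hence itself generates an accessible reflective localization $L : \mathcal L \to S^{-1}\mathcal L$. An object is $S$-local if and only if it is $S_F$-local and $S_G$-local, so $S^{-1}\mathcal L$ coincides, as a reflective subcategory of $\mathcal L$, with $F\mathcal L \cap G\mathcal L$. Accessibility is therefore immediate, and the only content still to prove is that $L$ preserves finite limits.

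To obtain left exactness I would construct the reflector by transfinite iteration. Choose a regular cardinal $\kappa$ for which both $F$ and $G$ are $\kappa$-accessible, and define a tower of endofunctors $L_\alpha : \mathcal L \to \mathcal L$ together with natural transformations $\mathrm{id} \to L_\alpha$ by the rules $L_0 = \mathrm{id}$, $L_{2n+1} = F \circ L_{2n}$, $L_{2n+2} = G \circ L_{2n+1}$ (successor maps induced by the units of $F$ and $G$), and pointwise colimit at limit ordinals. A small-object-style argument shows that the sequence stabilises after some $\kappa$-filtered ordinal $\lambda$. The value $L_\lambda X$ is then fixed by both $F$ and $G$, so it lies in $F\mathcal L \cap G\mathcal L$, and the universal property of the successive units identifies $L_\lambda$ with the reflector $L$ obtained in the first step.

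Left exactness now drops out of an induction along the tower: at successor stages I compose with a left exact functor, and at limit stages I take a pointwise filtered colimit of left exact functors, which is again left exact because $\mathcal L$ is an $\infty$\=/logos and hence has filtered colimits commuting with finite limits. This yields the left exactness of $L_\lambda \simeq L$. The main obstacle is the limit-ordinal bookkeeping: one has to pick $\kappa$ so that $\kappa$-accessibility is preserved through the iteration and so that the sequence genuinely converges at a $\kappa$-filtered stage, rather than merely approximates the intersection. Once that is arranged, the $\infty$\=/logos axiom does the remaining work.
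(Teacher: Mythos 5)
Your argument is correct, but it takes a genuinely different route from the one the paper relies on: the paper offers no proof of its own and simply cites HTT, where left exactness of the intersection is obtained by showing that the strongly saturated class generated by $S_F \cup S_G$ is of small generation and stable under pullbacks, and then invoking the characterisation of left exact localisations via pullback-stable strongly saturated classes (HTT 6.2.1.1--6.2.1.2). Your first paragraph agrees with the first half of that argument and correctly delivers accessibility; the divergence is in the left exactness step, where you replace the pullback-stability argument by the classical transfinite alternation of $F$ and $G$. That iteration does work here, and it has the virtue of making the role of the $\infty$-logos axiom completely explicit: successor stages compose left exact functors, and limit stages are pointwise filtered colimits of left exact functors, which remain left exact because finite limits commute with filtered colimits in $\mathcal L$. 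Two points in your sketch deserve tightening. First, ``stabilises'' is not quite the right mechanism: what you actually need is that for a regular cardinal $\lambda > \kappa$ the chain $(L_\alpha X)_{\alpha < \lambda}$ has both its $F$-local stages and its $G$-local stages cofinal, so that $L_\lambda X$ is simultaneously a $\kappa$-filtered colimit of $F$-local objects and of $G$-local objects and hence lies in $F\mathcal L \cap G\mathcal L$; together with the observation that the units and their transfinite composites are $(S_F \cup S_G)$-equivalences, this identifies $L_\lambda$ with the reflector $L$ without any literal stabilisation below $\lambda$. Second, assembling $\alpha \mapsto L_\alpha$ into a coherent diagram of endofunctors by transfinite recursion requires some care in the $\infty$-categorical setting, though this is standard. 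In exchange for these bookkeeping costs, your proof is more elementary and self-contained; HTT's pullback-stability argument is shorter given the machinery of its Section 6.2.1 and extends without change to arbitrary intersections in the lattice of subtoposes.
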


We now describe the coproducts inside
$\mathcal L\mathrm{og}$. Notice that the following theorem is stated in
HA~\cite[Example 4.8.1.19]{lurie2017higher} but
the proof is left to the reader as it has already been proved in HTT~%
\cite[Theorem 7.3.3.9]{doi:10.1515/9781400830558} for
the case where one of the two $\infty$\=/toposes is localic.

\begin{theorem}%
\label{thm:le_produit_tensoriel_est_le_coproduit_des_logos}
If $\mathcal L$ and $\mathcal M$ are two $\infty$\=/logoses, then
$\mathcal L \otimes \mathcal M$ is
a coproduct of $\mathcal L$ and $\mathcal M$ in  $\mathcal L\mathrm{og}$.
\end{theorem}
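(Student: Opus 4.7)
I plan to reduce to the case of free $\infty$\=/logoses by means of lex accessible reflective localisations.

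For the free case, the standard identification $\mathcal P(A) \otimes \mathcal P(B) \simeq \mathcal P(A \times B)$ of presheaf $\infty$\=/categories yields $\mathcal S[D] \otimes \mathcal S[E] \simeq \mathcal P(\overline{D} \times \overline{E})$. One then checks that $\overline{D} \times \overline{E}$ is the coproduct of $\overline{D}$ and $\overline{E}$ in the $\infty$\=/category of small $\infty$\=/categories with finite limits, via the correspondence sending a pair of lex functors $(f : \overline{D} \to \mathcal C,\ g : \overline{E} \to \mathcal C)$ to the lex functor $(d, e) \mapsto f(d) \times g(e)$, with inverse given by restriction along $d \mapsto (d, 1)$ and $e \mapsto (1, e)$; lex-ness of the extension uses that finite products commute with finite limits in any $\infty$\=/logos. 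This identifies $\mathcal S[D] \otimes \mathcal S[E]$ with $\mathcal S[D \sqcup E]$, which is the coproduct in $\mathcal L\mathrm{og}$ by the universal property of free logoses.

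For the general case, write $\mathcal L = S^{-1} \mathcal S[D]$ and $\mathcal M = T^{-1} \mathcal S[E]$ as lex accessible localisations. By \cref{TensOfLoc}, $\mathcal L \otimes \mathcal M$ is the accessible reflective localisation of $\mathcal S[D] \otimes \mathcal S[E] \simeq \mathcal S[D \sqcup E]$ at $S \boxtimes T$. To argue that this localisation is left exact, I would realise $\mathcal L \otimes \mathcal M$ as the intersection inside $\mathcal S[D] \otimes \mathcal S[E]$ of the two reflective subcategories $\mathcal L \otimes \mathcal S[E]$ and $\mathcal S[D] \otimes \mathcal M$, which are the other two corners of the pushout square of \cref{PushoutTensor}, and apply \cref{thm:intersection_de_localisations_cc_lex}. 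This reduces the question to the half-localised case $\mathcal L \otimes \mathcal S[E]$, for which the description $\mathcal S[D] \otimes \mathcal S[E] \simeq [\overline{E}^\mathrm{op}, \mathcal S[D]]$ (coming from \cref{TensorProductFormula} combined with the universal property of presheaves) exhibits the localisation $L : \mathcal S[D] \to \mathcal L$ as acting pointwise; since limits in functor categories are computed pointwise, pointwise-applied lex accessible localisations remain lex accessible. Once $\mathcal L \otimes \mathcal M$ is established as a logos, the coproduct universal property follows by composing equivalences: morphisms out of $\mathcal L \otimes \mathcal M$ in $\mathcal L\mathrm{og}$ correspond to morphisms out of $\mathcal S[D \sqcup E]$ inverting $S \boxtimes T$, which by the free case correspond to pairs of morphisms out of $\mathcal S[D]$ and $\mathcal S[E]$ inverting $S$ and $T$ respectively, hence to pairs of morphisms out of $\mathcal L$ and $\mathcal M$.

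The main obstacle is the half-localised step — verifying that $\mathcal L \otimes \mathcal S[E]$ is a lex accessible localisation of $\mathcal S[D] \otimes \mathcal S[E]$; the key point is that the presheaf realisation of the tensor product makes the localisation pointwise, so left exactness of $L$ transfers immediately. Once this is in hand, \cref{thm:intersection_de_localisations_cc_lex} and the pushout structure of \cref{PushoutTensor} assemble the general case.
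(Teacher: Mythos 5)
Your proposal is correct and follows essentially the same route as the paper: the free case via $\mathcal P(\overline{D})\otimes\mathcal P(\overline{E})\simeq\mathcal P(\overline{D}\times\overline{E})$ together with the identification of $\overline{D}\times\overline{E}$ as the coproduct in lex categories, the half-localised step via the pointwise (hence lex) localisation of $[\overline{E}^{\mathrm{op}},\mathcal S[D]]$, the intersection argument combining \cref{PushoutTensor} with \cref{thm:intersection_de_localisations_cc_lex}, and the universal property via the $S\boxtimes T$ description. The only cosmetic difference is that you verify the lex-coproduct fact by an explicit formula where the paper deduces it from the finite-completion functor being a left adjoint.
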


\begin{proof}
Let $C$ and $D$ be two small $\infty$\=/categories, we first remark
that

\[
	\mathcal S[C] \otimes \mathcal S[D] \simeq \mathcal S[C \amalg D]\thinspace.
\]

To see this, we remark that

\[
	\mathcal S[C] \otimes \mathcal S[D] = \mathcal
	P(\overline{C}) \otimes \mathcal P(\overline{D})
	\simeq \mathcal P(\overline{C} \times \overline{D})\thinspace.
\]
Now look at the finite completion functor $C
\mapsto \overline{C}$. It starts from $\mathcal C\mathrm{at}$ and goes
to $\mathcal C\mathrm{at}^\mathrm{lex}$,
the large $\infty$\=/category of finitely complete
small $\infty$\=/categories with left exact functors. This functor is left
adjoint to the forgetful functor. Hence it sends coproducts to coproducts. But
in $\mathcal C\mathrm{at}^\mathrm{lex}$ products and coproducts coincide,
and because the forgetful functor preserves limits, we have:
\[
	\overline{C \amalg D} \simeq \overline{C} \times \overline{D}
	\Longrightarrow \mathcal S[C] \otimes \mathcal S[D] \simeq \mathcal P(
	\overline{C} \times \overline{D}) \simeq \mathcal P(\overline{C \amalg D}) =
	\mathcal S[C \amalg D]\thinspace.
\]
By the universal property of free $\infty$\=/logoses, we deduce that
$\mathcal S[C \amalg D]$ a coproduct of $\mathcal S[C]$ and $\mathcal S[D]$.

Let $\mathcal L$ and $\mathcal M$ be two $\infty$\=/logoses, we will now show
that $\mathcal L \otimes \mathcal M$ is an $\infty$\=/logos.
There exists two small
$\infty$\=/categories $C$ and $D$ together with two accessible
left exact reflective
localisation functors $G : \mathcal S[C] \to \mathcal L$
and $H : \mathcal S[D] \to \mathcal M$. Then both
\[
	G^{\overline{D}^\mathrm{op}} :
	{\mathcal S[C]}^{\overline{D}^\mathrm{op}} \to
	{\mathcal L}^{\overline{D}^\mathrm{op}} \quad \text{and} \quad
	H^{\overline{C}^\mathrm{op}} :
	{\mathcal S[D]}^{\overline{C}^\mathrm{op}}
	\to \mathcal M^{\overline{C}^\mathrm{op}}
\]
are left exact and accessible reflective localisation functors.
By \cref{PushoutTensor},
we deduce that $\mathcal L \otimes \mathcal M$ is equivalent to the intersection
$(\mathcal L \otimes \mathcal S[D]) \cap (\mathcal S[C] \otimes \mathcal M)$
and is thus, by \cref{thm:intersection_de_localisations_cc_lex},
an accessible and left exact localisation of $\mathcal S[C]
\otimes \mathcal S[D]$.
As we have
just shown above, $\mathcal S[C] \otimes \mathcal S[D]$ is equivalent to a free
$\infty$-logos, so that $\mathcal L \otimes \mathcal M$ is indeed an
$\infty$-logos.

Let $p^\ast : \mathcal S \to \mathcal L$ be a morphism
of $\infty$\=/logoses (unique up to contractible choice) and let
$q^\ast : \mathcal S \to \mathcal M$ be another. We
claim that the maps $p^\ast \otimes \mathrm{Id}_{\mathcal M} :
\mathcal M \to \mathcal L \otimes \mathcal M$ and $ \mathrm{Id}_{\mathcal L}
\otimes q^\ast : \mathcal L \to \mathcal L \otimes \mathcal M$ exhibit
$\mathcal L \otimes \mathcal M$ as
a pushout of $\mathcal L$ and $\mathcal M$ in $\mathcal L\mathrm{og}$. Notice
that both maps are left exact and cocontinuous: the first is the localisation
along left exact functors of the left exact cocontinuous map $\mathcal S[D]
\to \mathcal S[C] \otimes \mathcal S[D] \simeq \mathcal S[C \amalg D]$ induced
by the canonical map $D \hookrightarrow C \amalg D$. For a symmetric reason,
the second map is also a morphism of $\infty$\=/logoses.

For any $\infty$-logos $\mathcal E$, those two maps induce
a commutative square

\[
	\begin{tikzcd}
		{[\mathcal L \otimes \mathcal M, \mathcal E]}^\mathrm{lex}_\mathrm{cc}
		\arrow[d, hook]
		\arrow[r] & {[\mathcal L, \mathcal E]}^\mathrm{lex}_\mathrm{cc}
		\times {[\mathcal M,\mathcal E]}^\mathrm{lex}_\mathrm{cc}
		\arrow[d, hook] \\
		{[\mathcal S[C \amalg D], \mathcal E]}^\mathrm{lex}_\mathrm{cc} \arrow[r] 
		& {[\mathcal S[C], \mathcal E]}^\mathrm{lex}_\mathrm{cc}
		\times {[\mathcal S[D], \mathcal E]}^\mathrm{lex}_\mathrm{cc} \ .
	\end{tikzcd} 
\]

In the above diagram, the vertical arrows are inclusions and the bottom one is
an equivalence as $\mathcal S[C \amalg D]$ is the coproduct $\mathcal S[C]
\amalg \mathcal S[D]$. 

So we only need to show that if $(\varphi, \psi) \in {[\mathcal S[C], \mathcal
E]}^\mathrm{lex}_\mathrm{cc} \times
{[\mathcal S[D], \mathcal E]}^\mathrm{lex}_\mathrm{cc}$ factorises through
$\mathcal L$ and $\mathcal M$ then the
associated map $\varphi \amalg \psi$ factorises through
$\mathcal L \otimes \mathcal M$. Let $S$ be a
set of arrows of $\mathcal S[C]$ such that $\mathcal L
\simeq S^{-1} \mathcal S[C]$ and let be $T$ such that
$\mathcal M \simeq T^{-1} \mathcal S[D]$. If $\varphi$ and $\psi$
factorise, it means that $\varphi$ sends arrows in $S$ to equivalences and
$\psi$ sends arrows in $T$ to equivalences. Let $S \boxtimes T$ be the set of
arrows of the form $s \otimes x$ for $s \in S, x \in \mathcal S[D]$ or $y
\otimes t$ with $t \in T, y \in \mathcal S[C]$, in $\mathcal S[C] \otimes
\mathcal S[D]$. By the proof that $\mathcal S[C] \otimes \mathcal S[D] \simeq
\mathcal S[C \amalg D]$ above, we have that the map from $\mathcal S[C \amalg
D]$ to $\mathcal E$ associated to $(\varphi, \psi)$ is equivalent to the map
$\varphi \otimes \psi : \mathcal S[C] \otimes \mathcal S[D] \to \mathcal E$.
But $\varphi \otimes \psi$ sends arrows in $S \boxtimes T$ to equivalences so
it factorises through $\mathcal L \otimes \mathcal M
\simeq {(S \boxtimes T)}^{-1} \mathcal S[C] \otimes \mathcal S[D]$.
\end{proof}

\section{Coends for
\texorpdfstring{$\infty$\=/categories}{infinity-categories}}%
\label{coends}
As a prerequisite for the study of dualisable objects in $\widehat{\mathcal
C\mathrm{at}}_\mathrm{cc}$
and the $\infty$\=/category of $\omega$\=/continuous sheaves, we must
develop the theory of coends in the $\infty$\=/setting. Traditional references
on ends and coends for categories include \textsc{MacLane}%
~\cite{doi:10.1007/978-1-4757-4721-8}
and \textsc{Kelly}%
~\cite{isbn:0-521-28702-2}.
An introduction is given by \textsc{Upmeier}%
~\cite{upmeierendcoend}. The beginning of the theory of coends for
quasi-categories has been developed by \textsc{Cranch}%
~\cite{arXiv:1011.3243}
and \textsc{Glasman}%
~\cite{doi:10.1007/s00029-016-0228-z}.
In his thesis
\textsc{Cranch} develops the definition of dinatural transformations between
bifunctors and proves it extends the usual definition for categories,
while \textsc{Glasman} proves
that the space of  natural transformations can be written as an end. 

The tensor product of $\infty$\=/category of presheaves allow us to
develop the theory of coends in a straightforward way. It also does not depend
on a particular model for $\infty$\=/categories.

In this section we let $D$ be a small $\infty$\=/category and $\mathcal C$ be
a cocomplete $\infty$\=/category.

\subsection{Definition and first properties}

\begin{definition}
The left Kan extension of the map functor 
$\mathrm{Map}_D : D^\mathrm{op} \times D \to \mathcal S$
to $\mathcal P(D^\mathrm{op} \times D)$ is called the coend functor
and is denoted:
\[
	\int_D : \mathcal P(D^\mathrm{op} \times D) \to \mathcal S\thinspace.
\]
\end{definition}

\begin{remark}
By definition $\int_D$ is cocontinuous.
\end{remark}

\begin{proposition}%
\label{CoendCCCC}
The functor $\mathcal P(D^\mathrm{op}) \times \mathcal P(D) \to \mathcal S$
defined by:
\[
	(F,G) \mapsto \int_{c \thinspace\in\thinspace C} F(c) \times G(c)\thinspace,
\]
is cocontinuous in each variable.
\end{proposition}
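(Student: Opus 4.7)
The plan is to factor this functor through the external product. Let
$\boxtimes : \mathcal{P}(D^{\mathrm{op}}) \times \mathcal{P}(D)
\to \mathcal{P}(D^{\mathrm{op}} \times D)$
be the functor sending a pair $(F, G)$ to the bifunctor
$(c_1, c_2) \mapsto G(c_1) \times F(c_2)$.
Under this convention, $\int_{c \in D} F(c) \times G(c)$ is shorthand for $\int_D (F \boxtimes G)$, so the functor of the statement is the composite
\[
    \mathcal{P}(D^{\mathrm{op}}) \times \mathcal{P}(D)
    \xrightarrow{\boxtimes} \mathcal{P}(D^{\mathrm{op}} \times D)
    \xrightarrow{\int_D} \mathcal{S}\thinspace.
\]

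The right-hand arrow $\int_D$ is cocontinuous by the remark immediately preceding the statement. For the left-hand arrow, cocontinuity in each variable follows from two standard facts: colimits in the presheaf $\infty$\=/category $\mathcal{P}(D^{\mathrm{op}} \times D)$ are computed objectwise, and the Cartesian product on $\mathcal{S}$ is cocontinuous in each variable (since $\mathcal{S}$ is Cartesian closed). Composing a functor cocontinuous in each variable with a cocontinuous functor yields again a functor cocontinuous in each variable, which is the desired conclusion.

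More conceptually, $\boxtimes$ exhibits $\mathcal{P}(D^{\mathrm{op}} \times D)$ as the tensor product $\mathcal{P}(D^{\mathrm{op}}) \otimes \mathcal{P}(D)$ inside $\widehat{\mathcal{C}\mathrm{at}}_\mathrm{cc}$: this is a juxtaposition of two universal properties, since cocontinuous functors out of $\mathcal{P}(A \times B)$ are equivalent to functors $A \times B \to \mathcal{E}$, themselves equivalent to cocontinuous\=/in\=/each\=/variable functors out of $\mathcal{P}(A) \times \mathcal{P}(B)$. From this viewpoint the cocontinuity of $\boxtimes$ in each variable is tautological. I do not foresee any real obstacle: the only subtle point is simply to unwind the notation $\int_{c \in D} F(c) \times G(c)$ for an asymmetric pair of presheaves, after which the result is immediate from the cocontinuity of left Kan extensions to presheaf $\infty$\=/categories.
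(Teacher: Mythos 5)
Your proposal is correct and follows essentially the same route as the paper: the paper also writes the functor as the composite of the coend $\int_D$ with the canonical map $\mathcal P(D^\mathrm{op}) \times \mathcal P(D) \to \mathcal P(D^\mathrm{op})\otimes \mathcal P(D) \simeq \mathcal P(D^\mathrm{op} \times D)$, which is cocontinuous in each variable by the universal property of the tensor product. Your explicit identification of that canonical map as the external product, and the direct verification via objectwise colimits, merely spells out what the paper leaves implicit.
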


\begin{proof}
This functor is the composition of the cocontinuous functor
$\int_D$ with the canonical map
$\mathcal P(D^\mathrm{op}) \times \mathcal P(D) \to
\mathcal P(D^\mathrm{op})\otimes \mathcal P(D)
\simeq \mathcal P(D^\mathrm{op} \times D)$
which is cocontinuous in each variable.
\end{proof}

Thanks to the tensor product it is possible to extend the definition of the
coend to bimodules with values in any cocomplete $\infty$\=/category $\mathcal
C$.

\begin{proposition}%
\label{PropdefCoend}
The coend functor induces a cocontinuous functor
$\int_D : [D \times D^\mathrm{op}, \mathcal C] \longrightarrow \mathcal C$
still called the coend functor.
\end{proposition}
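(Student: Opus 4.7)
The plan is to extend $\int_D$ via the symmetric monoidal structure on $\widehat{\mathcal{C}\mathrm{at}}_{\mathrm{cc}}$. Since the space-valued coend $\int_D : \mathcal{P}(D^{\mathrm{op}} \times D) \to \mathcal{S}$ is cocontinuous by construction, it is a morphism in $\widehat{\mathcal{C}\mathrm{at}}_{\mathrm{cc}}$; tensoring it with $\mathrm{Id}_{\mathcal{C}}$ and using that $\mathcal{S}$ is the unit of $\otimes$ produces a cocontinuous functor
\[
	\mathcal{P}(D^{\mathrm{op}} \times D) \otimes \mathcal{C}
	\xrightarrow{\int_D \otimes \mathrm{Id}_{\mathcal{C}}}
	\mathcal{S} \otimes \mathcal{C} \simeq \mathcal{C} \thinspace .
\]

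The second step is to identify the source with the functor $\infty$-category $[D \times D^{\mathrm{op}}, \mathcal{C}]$. For this I would verify that both sides enjoy the same universal property: cocontinuous functors into an arbitrary cocomplete $\mathcal{D}$ correspond to functors $D^{\mathrm{op}} \times D \times \mathcal{C} \to \mathcal{D}$ cocontinuous in the $\mathcal{C}$-variable. On the tensor-product side this is immediate from the universal property of $\mathcal{P}(-)$ as a free cocompletion combined with the universal property of $\otimes$. On the functor-category side one appeals to a co-Yoneda decomposition, which expresses every bifunctor as a canonical colimit of elementary tensors $\mathrm{Map}_D(-, c') \times \mathrm{Map}_D(c, -) \otimes X$ (with $X \in \mathcal{C}$, using the canonical tensoring of a cocomplete $\mathcal{C}$ over $\mathcal{S}$), together with the pointwise computation of colimits. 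Transporting $\int_D \otimes \mathrm{Id}_{\mathcal{C}}$ across this equivalence produces the desired cocontinuous coend, and cocontinuity is preserved because each arrow in the composition is cocontinuous.

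The main obstacle I anticipate is the identification $\mathcal{P}(K) \otimes \mathcal{C} \simeq [K^{\mathrm{op}}, \mathcal{C}]$ when $\mathcal{C}$ is only assumed cocomplete rather than presentable: for presentable $\mathcal{C}$ the formula falls out of \cref{TensorProductFormula}, but in the general cocomplete case one must argue directly through universal properties, carefully tracking that $\mathcal{C}$ is canonically tensored over $\mathcal{S}$ so that the co-Yoneda generators make sense, and that the left Kan extensions of functors $D^{\mathrm{op}} \times D \times \mathcal{C} \to \mathcal{D}$ into $[D \times D^{\mathrm{op}}, \mathcal{C}]$ exist and remain cocontinuous in each variable.
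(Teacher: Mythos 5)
Your proposal is correct and follows essentially the same route as the paper: tensor the cocontinuous functor $\int_D$ with $\mathrm{Id}_{\mathcal C}$ and identify $\mathcal P(D^\mathrm{op}\times D)\otimes\mathcal C$ with $[D\times D^\mathrm{op},\mathcal C]$ (the paper simply asserts this identification). The obstacle you flag is not a real one: $\mathcal P(K)$ is dualisable in $\widehat{\mathcal C\mathrm{at}}_\mathrm{cc}$ with dual $\mathcal P(K^\mathrm{op})$, so $\mathcal P(K)\otimes\mathcal C\simeq[\mathcal P(K^\mathrm{op}),\mathcal C]_\mathrm{cc}\simeq[K^\mathrm{op},\mathcal C]$ for any cocomplete $\mathcal C$, with no presentability needed.
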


\begin{proof}
The map is obtained by tensoring with $\mathrm{Id}_\mathcal C$.
We then have a cocontinuous functor
$\int_D \otimes \, \mathrm{Id}_\mathcal C : \mathcal P(D^\mathrm{op} \times D)
\otimes \mathcal C \to \mathcal C$.
But the tensor product $\mathcal P(D^\mathrm{op} \times D) \otimes \mathcal C$
is canonically equivalent to the $\infty$\=/category
$[D \times D^\mathrm{op}, \mathcal C]$.
\end{proof}

\begin{proposition}[(Fubini)]
Let $C$ and $D$ be two small $\infty$\=/categories
and $\mathcal E$ be a cocomplete $\infty$\=/category. For any functor
$F : C^\mathrm{op} \times C \times D^\mathrm{op} \times D \to \mathcal E$
we have:
\[
	\int_{c \thinspace\in\thinspace C}\left[~
	\int_{d \thinspace\in\thinspace D}
	F(-,-,d,d)\right](c,c) \simeq
	\int_{d \thinspace\in\thinspace D} \left[~
	\int_{c \thinspace\in\thinspace C}
	F\left(c,c,-,-\right)\right](d,d) \ .
\]
\end{proposition}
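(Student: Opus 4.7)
The plan is to observe that both iterated coends are cocontinuous functors of $F$ and then to invoke the universal property of the tensor product of cocomplete $\infty$-categories to reduce the identity to a check on representables, where it will follow from two applications of the co-Yoneda lemma.

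Cocontinuity in $F$ of both sides follows from \Cref{PropdefCoend}. Indeed, the inner coend $\int_{d \in D}$ defines a cocontinuous functor
\[
	[D^\mathrm{op} \times D, [C^\mathrm{op} \times C, \mathcal E]] \longrightarrow [C^\mathrm{op} \times C, \mathcal E],
\]
which, post-composed with the outer cocontinuous $\int_{c \in C}$, expresses the left hand side as a cocontinuous functor of $F$; the right hand side is treated symmetrically. Combining the equivalences $\mathcal P(X) \otimes \mathcal P(Y) \simeq \mathcal P(X \times Y)$ and $[K, \mathcal E] \simeq \mathcal P(K^\mathrm{op}) \otimes \mathcal E$ gives an equivalence
\[
	[C^\mathrm{op} \times C \times D^\mathrm{op} \times D, \mathcal E]
	\simeq \mathcal P(C \times C^\mathrm{op}) \otimes \mathcal P(D \times D^\mathrm{op}) \otimes \mathcal E,
\]
and, by the universal property of the tensor product, two cocontinuous functors out of this category agree as soon as they do so on pure tensors $\alpha \otimes \beta \otimes X$ with $\alpha$, $\beta$ representable and $X \in \mathcal E$.

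It therefore suffices to verify the identity when
\[
	F(c_1, c_2, d_1, d_2) \simeq \mathrm{Map}_C(c_1, a) \times \mathrm{Map}_C(b, c_2) \times \mathrm{Map}_D(d_1, e) \times \mathrm{Map}_D(f, d_2) \otimes X,
\]
for fixed $a, b \in C$, $e, f \in D$, and $X \in \mathcal E$. Using \Cref{CoendCCCC} and the defining property of the coend as the left Kan extension of $\mathrm{Map}_D$, which yields $\int_{d \in D} \mathrm{Map}_D(d, e) \times \mathrm{Map}_D(f, d) \simeq \mathrm{Map}_D(f, e)$, one obtains
\[
	\int_{d \in D} F(c_1, c_2, d, d) \simeq \mathrm{Map}_C(c_1, a) \times \mathrm{Map}_C(b, c_2) \times \mathrm{Map}_D(f, e) \otimes X,
\]
and a second application of $\int_{c \in C}$ gives $\mathrm{Map}_C(b, a) \times \mathrm{Map}_D(f, e) \otimes X$. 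Since this computation is manifestly symmetric in $C$ and $D$, the right hand side produces the same answer.

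The main obstacle will be the bookkeeping needed to realise each iterated coend as a genuine cocontinuous functor on the triple tensor product above, which requires reconciling the pointwise inner-coend formula with the tensor-product structure. Once this compatibility is established, the reduction to representables and the subsequent co-Yoneda computation are immediate.
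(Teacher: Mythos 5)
Your argument is correct and shares its starting point with the paper's proof --- both rest on the identification of $[C^\mathrm{op}\times C\times D^\mathrm{op}\times D,\mathcal E]$ with the tensor product $\mathcal P(C\times C^\mathrm{op})\otimes\mathcal P(D\times D^\mathrm{op})\otimes\mathcal E$ --- but you conclude differently. The paper observes that under this identification the inner and outer coends are, by the very construction of the $\mathcal E$-valued coend in \cref{PropdefCoend}, the functors $\mathrm{Id}\otimes\int_D\otimes\,\mathrm{Id}_{\mathcal E}$ and $\int_C\otimes\,\mathrm{Id}\otimes\mathrm{Id}_{\mathcal E}$, so the two composites are each canonically $\int_C\otimes\int_D\otimes\,\mathrm{Id}_{\mathcal E}$ by functoriality of $\otimes$, and the square commutes with no computation on generators at all. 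You instead invoke cocontinuity in $F$ to reduce to representable pure tensors and finish with the co-Yoneda identity $\int_{d}[d,e]\times[f,d]\simeq[f,e]$; this is a legitimate alternative and the computation is right, but two remarks are in order. First, the ``bookkeeping'' you defer to the end --- reconciling the pointwise inner coend with the tensor-factor description --- is precisely the content of the paper's one-line proof, and once it is in place the reduction to representables becomes redundant work. Second, in the $\infty$-categorical setting ``agreeing on pure tensors'' must mean exhibiting an equivalence of the restricted functors on $C\times C^\mathrm{op}\times D\times D^\mathrm{op}\times\mathcal E$ (which the universal property of $\otimes$ then promotes to an equivalence of cocontinuous functors), not a mere objectwise coincidence of values; your co-Yoneda identifications are indeed natural, so this is fixable by phrasing, but it should be said. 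What your route buys is an explicit, formula-level verification that makes the symmetry of the answer visible; what the paper's route buys is brevity and the complete absence of any coherence issue.
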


\begin{proof}
Thanks to the equivalences
\[
	[C^\mathrm{op} \times C \times D^\mathrm{op} \times D, \mathcal E] \simeq
	\mathcal P(C \times C^\mathrm{op}) \otimes [D^\mathrm{op} \times
	D, \mathcal E] \simeq \mathcal P(D \times D^\mathrm{op})
	\otimes [C^\mathrm{op} \times C, \mathcal E]\thinspace ,
\]
the following coend square commutes:
\[
	\begin{tikzcd}
		{[C^\mathrm{op} \times C \times D^\mathrm{op} \times D, \mathcal E]}
		\arrow[dd, "\int_C"]
		\arrow[r, "\int_D"] & \mathcal P(C \times C^\mathrm{op}) \otimes \mathcal E
		\arrow[dd, "\int_\mathrm{c}"]
		\\ \\
		\mathcal P(D \times D^\mathrm{op}) \otimes \mathcal E \arrow[r, "\int_D"] &
		\mathcal E\thinspace.
	\end{tikzcd}
\]
\end{proof}

\subsection{Yoneda lemma}
Let us recall the definition of tensoring of a cocomplete
$\infty$\=/category over $\mathcal S$.
Let $(K,c)
\in \mathcal S \times \mathcal C$. Then the cotensor $c^K$ defined by:
\[
	K \otimes c = \underset{K}{\varinjlim}\thinspace c \ .
\]
It is a cocontinuous functor in each variable.

\begin{proposition}[(Yoneda)]%
\label{coYoneda}
Let $F : D \to \mathcal C$ be a functor.
Then for any
$c \thinspace\in\thinspace D$,
\[
	F(c) \simeq \int_{d \thinspace\in\thinspace C} [d,c] \otimes F(d)\thinspace ,
\]
where $[d,c]$ is a shorthand notation for the $\mathrm{Map}(d,c)$.
\end{proposition}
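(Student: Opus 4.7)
The plan is to reduce the formula to representable functors $F$ via cocontinuity, and then to invoke the construction of $\int_D$ as a left Kan extension of $\mathrm{Map}_D$.

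First I would observe that both sides of the desired equivalence are cocontinuous functors of $F \in [D, \mathcal{C}]$. The left-hand side is evaluation at $c$, which clearly preserves colimits. For the right-hand side, the assignment $F \mapsto \bigl((d_1, d_2) \mapsto [d_1, c] \otimes F(d_2)\bigr)$ is cocontinuous into $[D^{\mathrm{op}} \times D, \mathcal{C}]$ (tensoring by a fixed space is cocontinuous in $\mathcal{C}$ and limits/colimits in functor categories are computed pointwise), and $\int_D$ itself is cocontinuous by \cref{PropdefCoend}. It therefore suffices to verify the formula on a generating family for $F$.

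I would then dispose of the case $\mathcal{C} = \mathcal{S}$. Here $[D, \mathcal{S}] = \mathcal{P}(D^{\mathrm{op}})$ is cocontinuously generated by the covariant representables $F_e = \mathrm{Map}_D(e, -)$ for $e \in D$. On such an $F_e$, the left-hand side is just $F_e(c) = [e, c]$. For the right-hand side, the bimodule $(d_1, d_2) \mapsto [d_1, c] \times [e, d_2]$, viewed as a presheaf on $D^{\mathrm{op}} \times D$, is exactly the Yoneda image of the object $(e, c) \in D^{\mathrm{op}} \times D$. Since $\int_D$ is by construction the left Kan extension of $\mathrm{Map}_D : D^{\mathrm{op}} \times D \to \mathcal{S}$ along Yoneda, its value on this representable is $\mathrm{Map}_D(e, c) = [e, c]$, matching the left-hand side.

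Finally, I would bootstrap to an arbitrary cocomplete $\mathcal{C}$ using the equivalence $[D, \mathcal{C}] \simeq \mathcal{P}(D^{\mathrm{op}}) \otimes \mathcal{C}$ already invoked in the proof of \cref{PropdefCoend}. Both sides of the formula are cocontinuous in $F$, so it is enough to check them on ``pure tensors'' $d \mapsto G(d) \otimes e$ with $G \in \mathcal{P}(D^{\mathrm{op}})$ and $e \in \mathcal{C}$. The left-hand side gives $G(c) \otimes e$, while the right-hand side gives $\bigl(\int^d [d, c] \otimes G(d)\bigr) \otimes e$, which reduces to $G(c) \otimes e$ by the $\mathcal{S}$-valued case. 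The main delicate point I expect to negotiate is the variance bookkeeping required to identify the integrand $(d_1, d_2) \mapsto [d_1, c] \times [e, d_2]$ with the representable at $(e, c) \in D^{\mathrm{op}} \times D$, and to commute the tensor product with $\mathcal{C}$ past the $\mathcal{S}$-valued coend; once these identifications are in place, the rest is a direct application of the tensor-product formalism and of the universal property defining $\int_D$.
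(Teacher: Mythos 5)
Your proposal is correct and follows essentially the same route as the paper: reduce to the case $\mathcal C = \mathcal S$, use cocontinuity of the coend to check the formula on representables (where it holds by the very definition of $\int_D$ as the left Kan extension of $\mathrm{Map}_D$ along Yoneda), and then transport the resulting natural equivalence to a general cocomplete $\mathcal C$ by tensoring with $\mathrm{Id}_{\mathcal C}$. The only cosmetic difference is that the paper tensors the identification of endofunctors of $\mathcal P(D^{\mathrm{op}})$ with $\mathrm{Id}_{\mathcal C}$ all at once rather than re-checking on pure tensors, which amounts to the same thing.
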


\begin{proof}
Let's prove the case where
$\mathcal C = \mathcal S$. Let $F : D \to \mathcal S$ be any functor and let $y
: D^\mathrm{op} \to \mathcal P(D^\mathrm{op})$ be the Yoneda embedding.

By cocontinuity of the coend functor, we have for $c \in D$,
\[
	\int_{d \thinspace\in\thinspace D} [d,c] \times F(d) \simeq
	\underset{x \thinspace\in\thinspace \mathrm{el}(F)}{\varinjlim}
	\int_{d \thinspace\in\thinspace D} [d,c] \times [x,d]\thinspace.
\]
But by definition of the coend functor
$\int_{d \thinspace\in\thinspace D} [d,c] \times [x,d] \simeq x(c)$
And the formula is proved:
\[
	\int_{d \thinspace\in\thinspace D} [d,c] \times F(d) \simeq
	\underset{x \thinspace\in\thinspace \mathrm{el}(F)}{\varinjlim}
	[x,c] \simeq F(c)\thinspace.
\]

Hence the functor $F \mapsto \int_{d \thinspace\in\thinspace D} [d,-]
\times F(d)$ is
homotopic to the identity. Tensoring it with the identity of $\mathcal C$, we
obtain an endofunctor of $[D, \mathcal C]$
$F \mapsto \int_{d \thinspace\in\thinspace D} [d,-] \otimes F(d)$
homotopic to the identity, which proves the formula.
\end{proof}

\subsection{Left Kan extensions as coends}

When a bimodule $D^\mathrm{op} \times D \to \mathcal C$ is given by the tensor
product of two functors, the coend is easily expressible in terms of colimits.
In return, we are able to calculate left Kan extension along the Yoneda
embedding with the coend functor.

\begin{proposition}%
\label{CoendAsColim}
Let $G$ be an object of $\mathcal P(D)$
and let $F : D \to \mathcal C$ be a functor. Then:
\[
	\int_{d\thinspace \in\thinspace D} G(d) \otimes F(d) \simeq
	\underset{d\thinspace \in\thinspace \mathrm{el}(G)}{\varinjlim}\thinspace
	F (d)\thinspace.
\]
\end{proposition}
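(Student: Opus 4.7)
The plan is to reduce the statement to the Yoneda lemma of \cref{coYoneda} by expressing $G$ as a colimit of representables and then invoking cocontinuity of the coend functor. Writing $G \simeq \varinjlim_{x \thinspace\in\thinspace \mathrm{el}(G)} y(d_x)$, where $y(d_x) = [-, d_x]$ is the representable presheaf and $d_x \in D$ denotes the object underlying $x \in \mathrm{el}(G)$, and recalling that $\int_{d \thinspace\in\thinspace D} [d, d_x] \otimes F(d) \simeq F(d_x)$ by the Yoneda lemma, it suffices to show that for fixed $F$ the assignment
\[
	\Phi_F : \mathcal{P}(D) \longrightarrow \mathcal{C}, \quad G \longmapsto \int_{d \thinspace\in\thinspace D} G(d) \otimes F(d)
\]
is cocontinuous.

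To establish this cocontinuity, I would factor $\Phi_F$ as the composition of two cocontinuous functors. The first is the operation $G \mapsto G \boxtimes F$ sending a presheaf to the bimodule $(d, d') \mapsto G(d) \otimes F(d')$ in $[D^\mathrm{op} \times D, \mathcal{C}]$; this is cocontinuous in $G$ because colimits in functor $\infty$\=/categories are computed pointwise and the action $\otimes : \mathcal{S} \times \mathcal{C} \to \mathcal{C}$ is cocontinuous in its first variable. The second is the coend functor $\int_D : [D^\mathrm{op} \times D, \mathcal{C}] \to \mathcal{C}$ of \cref{PropdefCoend}, which is cocontinuous by construction. Combining the colimit presentation of $G$ with the cocontinuity of the composite $\Phi_F$ yields
\[
	\int_{d \thinspace\in\thinspace D} G(d) \otimes F(d) \simeq \varinjlim_{x \thinspace\in\thinspace \mathrm{el}(G)} \int_{d \thinspace\in\thinspace D} [d, d_x] \otimes F(d) \simeq \varinjlim_{x \thinspace\in\thinspace \mathrm{el}(G)} F(d_x),
\]
which is precisely the announced $\varinjlim_{d \thinspace\in\thinspace \mathrm{el}(G)} F(d)$ under the usual abuse of notation.

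The main obstacle is the verification that $G \mapsto G \boxtimes F$ is genuinely cocontinuous as a functor into $[D^\mathrm{op} \times D, \mathcal{C}]$: a naive pointwise argument works in the $1$\=/categorical case but in the $\infty$\=/setting one must argue through the universal property of the tensor product, identifying $[D^\mathrm{op} \times D, \mathcal{C}]$ with $\mathcal{P}(D) \otimes \mathcal{P}(D^\mathrm{op}) \otimes \mathcal{C}$ so that cocontinuity in each variable of $\boxtimes$ is automatic. Once this identification is in place, the rest of the proof is entirely formal and reduces to the two already\=/established ingredients above.
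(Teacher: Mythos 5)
Your proof is correct and follows essentially the same route as the paper's: both write $G$ as the colimit of representables over $\mathrm{el}(G)$, use cocontinuity of the composite of $(G,F)\mapsto G\otimes F$ (in the first variable) with the coend functor of \cref{PropdefCoend}, and conclude by the Yoneda lemma of \cref{coYoneda}. The only difference is that you spell out the justification of the cocontinuity of $G\mapsto G\boxtimes F$ via the tensor-product identification, which the paper simply asserts.
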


\begin{proof}
The functor $\mathcal P(D) \times [D,\mathcal C] \to [D^\mathrm{op}
\times D, \mathcal C]$ sending $(G,F)$ to $G \otimes F$ is cocontinuous in the
first variable and the coend functor is cocontinuous. We then have that
$G \simeq \underset{c\thinspace \in
\thinspace\mathrm{el}(G)}{\varinjlim}\thinspace [-,c]$ implies:
\[
	\begin{tikzcd}[row sep=0pt, column sep=0pt]
		\displaystyle\int_{d\thinspace \in\thinspace D}
		G(d) \otimes F(d)
		& \simeq
		& \displaystyle\int_{d\thinspace \in\thinspace D}\Bigg(
		\underset{d\thinspace \in\thinspace
		\mathrm{el}(G)}{\varinjlim}\thinspace
		[c,d]\Bigg) \otimes F(c)
		&
		\\
		\simeq \displaystyle\int_{d
		\in D}\underset{d\thinspace \in\thinspace
		\mathrm{el}(G)}{\varinjlim}\thinspace
		[d,c] \otimes F(c)
		& \simeq
		&\underset{c\thinspace \in\thinspace \mathrm{el}(G)}{\varinjlim}\thinspace
		\displaystyle\int_{d\thinspace \in\thinspace D} [d,c]
		\otimes F(c)
		&
		\\
		& \simeq & \underset{d\thinspace \in\thinspace
		\mathrm{el}(G)}{\varinjlim}\thinspace
		F(d) \ . & \text{(Yoneda)}
	\end{tikzcd}
\] \end{proof}

\begin{corollary}
Let $F : D \to \mathcal C$ be a functor. Then the
left Kan extension of $F$ along the inclusion $i : D \to \mathcal P(D)$ is
given by:
\[
	\mathrm{Lan}_i F : G \mapsto \int_{d\thinspace \in\thinspace D} G(d)
	\otimes F(d) \thinspace.
\]
\end{corollary}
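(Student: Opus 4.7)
The plan is to combine the two results we have just proved: \cref{CoendAsColim} rewrites the coend as a colimit over the category of elements, and the Yoneda formula for $G$ in $\mathcal P(D)$ expresses $G$ itself as such a colimit.

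Concretely, I would define the candidate functor
\[
	L : \mathcal P(D) \to \mathcal C, \qquad G \mapsto \int_{d\thinspace \in\thinspace D} G(d) \otimes F(d),
\]
and verify the two defining properties of a left Kan extension along $i$. First, cocontinuity of $L$: the assignment $G \mapsto G \otimes F$ is cocontinuous in $G$ (pointwise tensor with a fixed functor), and the coend functor of \cref{PropdefCoend} is cocontinuous; composing them gives that $L$ is cocontinuous. Alternatively, one can invoke \cref{CoendAsColim} to write $L(G) \simeq \varinjlim_{\mathrm{el}(G)} F$, which makes the pointwise formula for $\mathrm{Lan}_i F$ manifest.

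Second, the restriction $L \circ i \simeq F$: applied to a representable $G = [-,c]$, the coYoneda formula of \cref{coYoneda} gives
\[
	L(i(c)) = \int_{d\thinspace \in\thinspace D} [d,c] \otimes F(d) \simeq F(c).
\]
Since $L$ is cocontinuous and $\mathcal P(D)$ is the free cocompletion of $D$, the restriction functor $i^\ast : [\mathcal P(D), \mathcal C]_{\mathrm{cc}} \to [D, \mathcal C]$ is an equivalence with inverse $\mathrm{Lan}_i$, so the equivalence $i^\ast L \simeq F$ upgrades uniquely to $L \simeq \mathrm{Lan}_i F$.

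I do not expect any serious obstacle: the only slightly delicate point is the cocontinuity of $L$, where one must be careful that \emph{both} the coend and the pairing with $F$ contribute, but this is exactly what \cref{PropdefCoend} and the bilinearity of $\otimes$ in $\widehat{\mathcal C\mathrm{at}}_{\mathrm{cc}}$ supply. The argument via \cref{CoendAsColim} bypasses even this and makes the identification with the standard colimit formula for $\mathrm{Lan}_i F$ completely transparent.
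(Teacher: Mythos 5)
Your proposal is correct, and it contains the paper's proof as its own ``alternatively'' remark: the paper simply observes that $G \simeq \varinjlim_{\mathrm{el}(G)} d$ in $\mathcal P(D)$, so that the standard pointwise formula gives $(\mathrm{Lan}_i F)(G) \simeq \varinjlim_{\mathrm{el}(G)} F(d)$, and then applies \cref{CoendAsColim} to identify this colimit with the coend --- two lines in total. Your primary route instead verifies the universal property directly: you check that $G \mapsto \int_d G(d) \otimes F(d)$ is cocontinuous (via \cref{PropdefCoend} and bilinearity of the pairing), that it restricts to $F$ on representables (via \cref{coYoneda}), and then invoke the equivalence $i^\ast : {[\mathcal P(D), \mathcal C]}_{\mathrm{cc}} \simeq [D, \mathcal C]$ coming from the free cocompletion. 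Both arguments are sound and ultimately rest on the same ingredients, since \cref{CoendAsColim} is itself proved from the Yoneda formula; the paper's version is shorter because it takes the colimit formula for $\mathrm{Lan}_i$ as known, whereas yours makes the characterising properties of the Kan extension explicit and so is marginally more self-contained. No gap in either direction.
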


\begin{proof}
For any functor $G \in \mathcal P(D)$, write $\mathrm{el}(G)$ for
its $\infty$\=/category of elements. Then
$G \simeq \underset{c\thinspace \in\thinspace
\mathrm{el}(G)}{\varinjlim} d$
in $\mathcal P(D)$, so  the left Kan extension is given by:
\[
	(\mathrm{Lan}_i F)(G) \simeq \underset{d\thinspace \in\thinspace
	\mathrm{el}(G)}{\varinjlim}
	\thinspace F(d)\thinspace.
\]
Then apply \cref{CoendAsColim}.
\end{proof}

\begin{remark}
Dualising the proofs, one can obtain the results of the theory of ends.
\end{remark}

\section{Exponentiable \texorpdfstring{$\infty$\=/toposes}{infinity-toposes}}

In this section we prove that exponentiable
$\infty$\=/toposes $\mathcal X$ are those whose $\infty$-logos
$\mathcal S\mathrm{h}(\mathcal X)$ is continuous. This result is an
$\infty$\=/version of the theorem of \textsc{Johnstone} and
\textsc{Joyal}~\cite[Theorem 4.10]{doi:10.1016/0022-4049(82)90083-4}.

\begin{definition}
Let $\mathcal X$ be an $\infty$\=/topos, we will say that
$\mathcal X$ is exponentiable if the functor $\mathcal Y \mapsto
\mathcal Y \times \mathcal X$ has a right adjoint.

For an $\infty$\=/topos $\mathcal Y$ we will say that the particular
exponential $\mathcal Y^\mathcal X$ exists if there is an $\infty$\=/topos
$\mathcal Y^\mathcal X$ and a map $\mathcal X \times \mathcal Y^\mathcal X \to
\mathcal Y$ such that the induced map $\mathrm{Map}(\mathcal Z, \mathcal
Y^\mathcal X) \to \mathrm{Map}(\mathcal Z \times \mathcal X, \mathcal Y)$ is an
isomorphism in $\widehat{\mathcal H}$ for every $\mathcal Z \in \mathcal
T\mathrm{op}$.
\end{definition}

\begin{remark}%
\label{PartExp=>Exp}
By proposition 5.2.2.12 in HTT~\cite{doi:10.1515/9781400830558}, an
$\infty$\=/topos $\mathcal X$ is exponentiable if and only if for any
$\mathcal Y \in \mathcal T\mathrm{op}$,
the particular exponential $\mathcal Y^\mathcal X$ exists.
\end{remark}

\subsection{Injective \texorpdfstring{$\infty$\=/toposes}{infinity-toposes}
            and their points}

\begin{definition}
We shall say that $f : \mathcal X \to \mathcal Y$ is an inclusion or that
$\mathcal X$ is a subtopos of $\mathcal Y$ if $f^\ast$ has a fully faithful
right adjoint.
\end{definition}

\begin{definition}
An $\infty$\=/topos $\mathcal X$ is injective if for every
subtopos $m : \mathcal Y \to \mathcal Z$, the composition morphism
$\mathrm{Map}(\mathcal Z, \mathcal X) \to \mathrm{Map}(\mathcal Y, \mathcal X)$
has a section.
\end{definition}

\begin{remark}
This notion of injective $\infty$\=/topos corresponds to the
notion of weakly injective topos defined in
\emph{Sketches of an Elephant}~\cite{isbn:0-19-851598-7}. We do not
investigate the notions of complete injective and strongly
injective $\infty$\=/toposes.
\end{remark}

\begin{proposition}%
\label{thm:les_injectifs_sont_les_retracts_d_affines}
All affine $\infty$\=/toposes are injective. Furthermore
an $\infty$\=/topos is injective if and only if
it is a retract in $\mathcal T\mathrm{op}$ of an affine $\infty$-toposes.
\end{proposition}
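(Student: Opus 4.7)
My plan is to establish the three assertions in turn: (a) every affine $\infty$-topos is injective, (b) retracts in $\mathcal T\mathrm{op}$ of injectives are injective, and (c) every injective is a retract of an affine. The key translation I would use throughout is that a subtopos inclusion $m : \mathcal Y \to \mathcal Z$ corresponds on the logos side to a left exact accessible localisation $m^\ast : \mathcal S\mathrm{h}(\mathcal Z) \to \mathcal S\mathrm{h}(\mathcal Y)$ with fully faithful right adjoint $m_\ast$, so $m^\ast m_\ast \simeq \mathrm{Id}$.

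For (a), I would take $\mathcal X = \mathbb A^D$ and invoke the universal property of the free $\infty$-logos. This yields a natural equivalence $\mathrm{Map}_{\mathcal T\mathrm{op}}(-, \mathbb A^D) \simeq [D, \mathcal S\mathrm{h}(-)]$. Under this identification, restriction along the inclusion $m$ becomes post-composition with $m^\ast$, and a section is given by post-composition with the fully faithful right adjoint $m_\ast$; the verification that this is a section reduces precisely to the identity $m^\ast m_\ast \simeq \mathrm{Id}$.

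For (b), I would argue that if $\mathcal X$ is a retract of $\mathcal X'$ via $s : \mathcal X \to \mathcal X'$ and $r : \mathcal X' \to \mathcal X$ with $r \circ s \simeq \mathrm{Id}_{\mathcal X}$, then $\mathrm{Map}(-, \mathcal X)$ is a retract of $\mathrm{Map}(-, \mathcal X')$ as a functor on $\mathcal T\mathrm{op}^\mathrm{op}$. Consequently any section of the restriction map for $\mathcal X'$ transports, by conjugation with $s_\ast$ and $r_\ast$, to a section for $\mathcal X$. Combined with (a), this proves one direction of the equivalence.

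For (c), I would begin with an injective $\mathcal X$ and recall that by the very definition of an $\infty$-logos, $\mathcal S\mathrm{h}(\mathcal X)$ is an accessible left exact reflective localisation of some free logos $\mathcal S[D]$; this provides a canonical subtopos inclusion $m : \mathcal X \to \mathbb A^D$. Applying injectivity to this inclusion produces a section of the restriction $\mathrm{Map}(\mathbb A^D, \mathcal X) \to \mathrm{Map}(\mathcal X, \mathcal X)$, and evaluating it on the point $[\mathrm{Id}_{\mathcal X}]$ delivers a retraction $r : \mathbb A^D \to \mathcal X$ with $r \circ m \simeq \mathrm{Id}_{\mathcal X}$. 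The only potentially subtle point is producing a genuine $\infty$-categorical retract from injectivity data living in $\widehat{\mathcal H}$, but since we only need $r \circ m \simeq \mathrm{Id}_\mathcal X$ at the homotopy level to speak of a retract in $\mathcal T\mathrm{op}$, no further coherence is required here.
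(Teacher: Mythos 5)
Your proposal is correct and follows essentially the same route as the paper: affines are injective via the universal property of free $\infty$\=/logoses and postcomposition with the fully faithful right adjoint $m_\ast$; retracts of injectives are injective; and an injective $\mathcal X$ splits its canonical subtopos inclusion $\mathcal X \to \mathbb A^D$ coming from the definition of an $\infty$\=/logos. Your retract-of-mapping-spaces formulation of step (b) is a slightly cleaner packaging of the paper's pointwise extension argument, but the content is identical.
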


\begin{proof}
Let $\mathcal X$ be an injective $\infty$\=/topos, then by
definition, there exists an inclusion $\mathcal X \to \mathbb A^D$ with $D$ a
small $\infty$\=/category. Because $\mathcal X$ is injective, this morphism must
split.

On the contrary we will prove that any affine $\infty$-topos is injective: let
$\mathcal F = \mathcal S\mathrm{h}(\mathcal Y)$ and $\mathcal G = \mathcal
S\mathrm{h}(\mathcal Z)$ be two $\infty$\=/toposes and $f : \mathcal Y \to
\mathcal Z$ be an inclusion of $\infty$\=/toposes. Thanks to the universal
property of affine $\infty$-toposes, we have the following equivalences
$\mathrm{Map}(\mathcal Y, \mathbb A^D) \simeq \mathrm{core}([D, \mathcal F])$
and $\mathrm{Map}(\mathcal Z, \mathbb A^D) \simeq \mathrm{core}([D, \mathcal
G])$, where $\mathrm{core}$ designates the maximal subgroupoid of an
$\infty$\=/category.
Then the reflective localisation $f^\ast$ gives the desired reflective
localisation ${(f^\ast)}^D$. 

Finally, let's prove that a retract of an injective $\infty$\=/topos is still
injective: let $r : \mathcal X \to \mathcal X'$ be a retraction in $\mathcal
T\mathrm{op}$ with $\mathcal X$ injective and $s : \mathcal X' \to \mathcal X$
a section. Let $i : \mathcal Y \hookrightarrow \mathcal Z$ be an inclusion and
$f : \mathcal Y \to \mathcal X'$ be any map. Then $sf : \mathcal Y \to \mathcal
X$ can be extended in $g : \mathcal Z \to \mathcal X$ because $\mathcal X$ is
injective. Then $rg : \mathcal Z \to \mathcal X$ extends $f$.  \end{proof}

Injective $\infty$\=/toposes have the particular property of being
characterised by their $\infty$\=/categories of points. That is, knowing
$\mathrm{pt}(\mathcal X)$, we can recover $\mathcal X$ in the case where
$\mathcal X$ is injective. 

\begin{remark}
Generally there are several ways to build the opposite category of an 
($\infty$,2)-category
depending on whether one would like to `op' 1-arrows and/or 2-arrows. The
definition of the $\infty$\=/category of points of an $\infty$\=/topos
here reflects the choice of a definition
$\mathcal T\mathrm{op} = \mathcal L\mathrm{og}^\mathrm{op}$
where we choose to `op' only 1-arrows. Having said that, the
definition of the $\infty$\=/category of points of an $\infty$\=/topos
$\mathcal X$ is just
$\mathrm{pt}(\mathcal X) = {[
\thinspace\ast\thinspace, \mathcal X]}_{\mathcal T\mathrm{op}}$
in this ($\infty$,2)-categorical framework.

In the $\infty$\=/category of points, morphisms correspond to `generisation'
of points. Morphisms in the opposite $\infty$\=/category would correspond
to `specialisation' of points.
\end{remark}

\begin{definition}
Let $\mathcal I\mathrm{nj}$ be the full subcategory of $\mathcal T\mathrm{op}$
made of injective $\infty$\=/toposes.

Let us also define $\mathrm{pt}(\mathcal I\mathrm{nj})$ as a  non-full
subcategory of $\widehat{\mathcal C\mathrm{at}}_\infty$.
Its objects are presheaves
$\infty$\=/categories $\mathcal P(D)$ with $D$ a small $\infty$\=/category and
their retracts by $\omega$\=/continuous functors; its morphisms are
the $\omega$\=/continuous functors.
\end{definition}

\begin{proposition}%
\label{PointInj}
The functor of points $\mathrm{pt} : \mathcal
I\mathrm{nj} \to \mathrm{pt}(\mathcal I\mathrm{nj})$ is an equivalence of
$\infty$\=/categories.
\end{proposition}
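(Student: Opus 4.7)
My plan is to reduce to affine $\infty$-toposes via Proposition~\ref{thm:les_injectifs_sont_les_retracts_d_affines}, compute $\mathrm{pt}$ there using the universal properties of free $\infty$-logoses and Ind-completion, and extend to retracts. The functor $\mathrm{pt}$ sends an injective $\infty$-topos $\mathcal{X}$ to $[\ast, \mathcal{X}]_{\mathcal{T}\mathrm{op}} = \mathrm{Fun}^{\mathrm{lex,cc}}(\mathcal{S}\mathrm{h}(\mathcal{X}), \mathcal{S})$; on morphisms it is precomposition with $f^{\ast}$. For affines, the universal property of free $\infty$-logoses yields
\[
\mathrm{pt}(\mathbb{A}^D) \simeq \mathrm{Fun}^{\mathrm{lex,cc}}(\mathcal{S}[D], \mathcal{S}) \simeq \mathrm{Fun}(D, \mathcal{S}) = \mathcal{P}(D^{\mathrm{op}}),
\]
so $\mathrm{pt}$ takes values in presheaf $\infty$-categories, and essential surjectivity onto the generators of $\mathrm{pt}(\mathcal{I}\mathrm{nj})$ is immediate.

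The heart of the proof is full faithfulness on affines, i.e., showing that the natural map
\[
\mathrm{Map}_{\mathcal{T}\mathrm{op}}(\mathbb{A}^C, \mathbb{A}^D) \longrightarrow \mathrm{Fun}^{\omega}(\mathcal{P}(C^{\mathrm{op}}), \mathcal{P}(D^{\mathrm{op}}))
\]
is an equivalence. By the universal property of $\mathcal{S}[D]$, the left-hand side is $\mathrm{Fun}(D, \mathcal{S}[C]) = \mathrm{Fun}(D, \mathcal{P}(\overline{C}))$. On the right, I would use the identification $\mathcal{P}(C^{\mathrm{op}}) \simeq \mathrm{Ind}(\overline{C}^{\mathrm{op}})$, which follows from the universal properties of Ind-completion and finite-colimit completion: since $\overline{C}^{\mathrm{op}}$ is the free finite-colimit completion of $C^{\mathrm{op}}$, one has $\mathrm{Ind}(\overline{C}^{\mathrm{op}}) \simeq \mathrm{Fun}^{\mathrm{lex}}(\overline{C}, \mathcal{S}) = \mathrm{Fun}(C, \mathcal{S}) = \mathcal{P}(C^{\mathrm{op}})$. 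Under this identification, an $\omega$-continuous functor to $\mathcal{P}(D^{\mathrm{op}})$ should be determined by its restriction to the compact objects $\overline{C}^{\mathrm{op}}$, yielding
\[
\mathrm{Fun}^{\omega}(\mathcal{P}(C^{\mathrm{op}}), \mathcal{P}(D^{\mathrm{op}})) \simeq \mathrm{Fun}(\overline{C}^{\mathrm{op}}, \mathcal{P}(D^{\mathrm{op}})) = \mathrm{Fun}(D, \mathcal{P}(\overline{C})),
\]
which matches the left-hand side.

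To extend from affines to arbitrary injective $\infty$-toposes, I use that $\widehat{\mathcal{C}\mathrm{at}}_\infty$ is idempotent complete, so that any idempotent endomorphism of some $\mathcal{P}(D)$ in $\mathrm{pt}(\mathcal{I}\mathrm{nj})$ lifts through the fully faithful $\mathrm{pt}$ on affines to an idempotent on $\mathbb{A}^{D^{\mathrm{op}}}$ in $\mathcal{I}\mathrm{nj}$; splitting it produces a preimage, giving essential surjectivity onto all retracts. For full faithfulness between general retracts, the mapping $\infty$-categories are cut out of those between affine covers by the same retract data on both sides, and this structure is preserved by $\mathrm{pt}$.

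The main obstacle is the affine full faithfulness computation, specifically the matching between the $\omega$-continuity condition and the unconstrained data $\mathrm{Fun}(D, \mathcal{P}(\overline{C}))$. The subtlety is that a filtered-colimit-preserving functor out of an Ind-completion does not automatically preserve finite limits, so one must exploit the specific structure of $\overline{C}^{\mathrm{op}}$ as the finite-colimit completion of $C^{\mathrm{op}}$ together with the fact that finite limits in $\mathrm{Ind}(\overline{C}^{\mathrm{op}})$ commute with filtered colimits in order to verify that the finite-limit condition translates precisely—no more, no less—to the unconstrained restriction data on $\overline{C}^{\mathrm{op}}$.
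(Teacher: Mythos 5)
Your proposal is correct, and it shares the paper's overall skeleton --- establish the equivalence on affine $\infty$\-/toposes versus presheaf $\infty$\-/categories, then extend to retracts/Cauchy completions --- but it handles the affine step by a genuinely different mechanism. The paper constructs an explicit inverse functor $\psi : \mathcal A \mapsto {[\mathcal A, \mathcal S]}_\omega$ and checks that it lands in $\infty$\-/logoses (the key input being that filtered colimits in $\mathcal S$ are left exact, so finite limits and colimits in ${[\mathcal A,\mathcal S]}_\omega$ are pointwise and precomposition by an $\omega$\-/continuous functor is left exact and cocontinuous), together with the computation $\psi(\mathcal P(D^\mathrm{op})) \simeq \mathcal S[D]$. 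You instead compute both mapping objects directly: the source via the universal property of free $\infty$\-/logoses, the target via $\mathcal P(C^\mathrm{op}) \simeq \mathrm{Ind}\bigl({(\overline{C})}^\mathrm{op}\bigr)$ and the universal property of $\mathrm{Ind}$\-/completion, landing both times on $[D, \mathcal P(\overline{C})]$. Your route avoids having to produce a two-sided inverse but pays for it by requiring the check that the two identifications are intertwined by $\mathrm{pt}$ itself --- it is not enough that the two sides are abstractly equivalent. That check is short: a corepresentable point $\mathrm{Map}_{\overline{C}}(k,-)$ corresponds to the evaluation functor $\mathrm{ev}_k$ on $\mathcal P(\overline{C})$, so restricting $(- \circ g^\ast)$ to compact objects recovers exactly the functor $D \to \mathcal P(\overline{C})$ classifying $g^\ast$. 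One remark on your final paragraph: the worry about matching a finite\-/limit condition is a red herring, since morphisms of $\mathrm{pt}(\mathcal I\mathrm{nj})$ are merely $\omega$\-/continuous with no left\-/exactness imposed, and the left\-/exactness on the logos side is already absorbed by the universal property of $\mathcal S[D]$; the only thing to verify is the compatibility just described. Your extension to retracts by splitting idempotents through the fully faithful restriction of $\mathrm{pt}$ to affines is the same Cauchy\-/completion argument the paper invokes, made slightly more explicit.
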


\begin{proof}
Let $D$ be a small $\infty$\=/category, then $\mathrm{pt}(\mathbb
A^D) \simeq \mathcal P(D^\mathrm{op})$ so, by
\cref{thm:les_injectifs_sont_les_retracts_d_affines},
$\mathrm{pt}$ is a well defined functor from
$\mathcal I\mathrm{nj}$ to $\mathrm{pt}(\mathcal I\mathrm{nj})$. 

We build a new functor $\psi : \mathcal A \mapsto {[\mathcal A, \mathcal
S]}_\omega$, where ${[\mathcal A, \mathcal S]}_\omega$ is the
$\infty$\=/category of $\omega$\=/continuous functors between
$\mathcal A$ and $\mathcal S$.

We claim that $\psi$ is a functor from $\mathrm{pt}(\mathcal I\mathrm{nj})$ to
$\mathcal I\mathrm{nj}^\mathrm{op}$. For this, let $m : \mathcal A \to \mathcal
B$ be an $\omega$\=/continuous functor, then it induces a functor $m^\ast :
{[\mathcal B, \mathcal S]}_\omega \to {[\mathcal A, \mathcal S]}_\omega$.
Because filtered colimits are left exact in $\mathcal S$, we see that finite
limits and all colimits in $\psi(\mathcal A)$ and $\psi(\mathcal B)$ are
computed pointwise, so $m^\ast$ is cocontinous and left exact. Finally,
$\psi(\mathcal P(D^\mathrm{op})) \simeq \mathcal S[D] = \mathcal
S\mathrm{h}(\mathbb A^D)$, so that by
\cref{thm:les_injectifs_sont_les_retracts_d_affines} $\psi$ is well defined. 

By the above computation, the functor of points $\mathrm{pt}$ induces an
equivalence on the subcategory of affine $\infty$-toposes to the subcategory
of $\mathrm{pt}(\mathcal I\mathrm{nj})$ made of presheaves
$\infty$\=/categories.  This equivalence extends to their Cauchy-completion.
\end{proof}

\subsection{Continuous
\texorpdfstring{$\infty$\=/categories}{infinity-categories}}

The definition of a continuous category was first
given in \emph{Continuous categories and exponentiable toposes}~%
\cite{doi:10.1016/0022-4049(82)90083-4}. We shall prove here the
same propositions in the $\infty$\=/setting.

\begin{definition}
Let $\mathcal C$ be an $\infty$\=/category with filtered
colimits. We will say that $\mathcal C$ is continuous if the
colimit functor $\varepsilon : \mathrm{Ind}(\mathcal C) \to \mathcal C$ has
a left adjoint $\beta : \mathcal C \to \mathrm{Ind}(\mathcal C)$.
\end{definition}

The $\infty$\=/category $\mathrm{Ind}(\mathcal C)$ is
not presentable even when $\mathcal C$ is.
We will now focus on continuous categories with smallness
properties. Namely, we wish to replace $\mathrm{Ind}(\mathcal C)$ by a
presentable $\infty$\=/category $\mathrm{Ind}(D)$.

\begin{definition}
Let $\mathcal C$ be a continuous $\infty$\=/category. If
there exists a small full subcategory $D \subset \mathcal C$, such that $D$ is
stable in $\mathcal C$ under finite limits and colimits and such that the
evaluation functor $\mathrm{Ind}(D) \to \mathcal C$ has a fully faithful left
adjoint, then we call the triple adjunction: 
\[
\begin{tikzcd}
\mathrm{Ind}(D) \arrow[r, "\varepsilon" description] 
& \mathcal C \arrow[l, "\alpha", shift left=2]
\arrow[l, "\beta", shift right=2,swap] \thinspace,
\end{tikzcd}
\]
a standard presentation.
\end{definition}

\begin{proposition}%
\label{thm:presentation_standard}
Let $\mathcal C$ be a presentable and continuous $\infty$\=/category.
Then $\mathcal C$ has a standard presentation.
\end{proposition}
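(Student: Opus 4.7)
The plan is to mimic the construction of \textsc{Johnstone} and \textsc{Joyal}: produce a small full subcategory $D \subseteq \mathcal C$, stable under finite limits and colimits, for which the left adjoint $\beta : \mathcal C \to \mathrm{Ind}(\mathcal C)$ factors through the fully faithful cocontinuous inclusion $\iota : \mathrm{Ind}(D) \hookrightarrow \mathrm{Ind}(\mathcal C)$ arising as the $\mathrm{Ind}$-extension of $D \hookrightarrow \mathcal C$. The resulting factorisation $\alpha : \mathcal C \to \mathrm{Ind}(D)$ will serve as the desired left adjoint. Indeed, $\iota$ admits a right adjoint $r : \mathrm{Ind}(\mathcal C) \to \mathrm{Ind}(D)$ given by restriction along $D \hookrightarrow \mathcal C$ (well defined since $D$ is closed under finite colimits). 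Composing the two adjunctions $\beta \dashv \varepsilon$ and $\iota \dashv r$ gives $\alpha \dashv \varepsilon \circ \iota = \varepsilon_D$, where $\varepsilon_D : \mathrm{Ind}(D) \to \mathcal C$ is the cocontinuous extension of the inclusion; and $\varepsilon_D \alpha = \varepsilon\iota\alpha = \varepsilon\beta \simeq \mathrm{id}_{\mathcal C}$, the last identity expressing the fully-faithfulness of $\beta$ (a standard consequence of continuity, since $\varepsilon$ admits the Yoneda embedding as right adjoint), forces the unit of $\alpha \dashv \varepsilon_D$ to be an equivalence and hence $\alpha$ to be fully faithful.

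To construct $D$, the key is to fix a regular uncountable cardinal $\kappa$ satisfying simultaneously: (i) $\mathcal C$ is $\kappa$-accessible; (ii) $\beta$ preserves $\kappa$-filtered colimits (achievable because $\beta$, being a left adjoint, is accessible); (iii) for every $c \in \mathcal C^\kappa$, the ind-object $\beta(c)$ lies in the essential image of the fully faithful functor $\mathrm{Ind}(\mathcal C^\kappa) \hookrightarrow \mathrm{Ind}(\mathcal C)$. I then define $D$ to be the closure of $\mathcal C^\kappa$ inside $\mathcal C$ under finite limits and colimits. Since $\kappa$ is uncountable, finite limits and colimits preserve $\kappa$-compactness, so $D \subseteq \mathcal C^\kappa$ remains essentially small; after one further enlargement of $\kappa$ the analogue of (iii) also holds with $D$ in place of $\mathcal C^\kappa$, i.e.\ $\beta(D) \subseteq \iota(\mathrm{Ind}(D))$.

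The desired factorisation of $\beta$ through $\iota$ then follows formally: any $c \in \mathcal C$ is a $\kappa$-filtered colimit $c \simeq \varinjlim_i c_i$ with $c_i \in \mathcal C^\kappa \subseteq D$, so by cocontinuity of $\beta$ combined with (ii) one has $\beta(c) \simeq \varinjlim_i \beta(c_i)$ in $\mathrm{Ind}(\mathcal C)$. Each $\beta(c_i)$ lies in $\iota(\mathrm{Ind}(D))$ by (iii), and this essential image is closed under filtered colimits in $\mathrm{Ind}(\mathcal C)$ because $\iota$ is cocontinuous and fully faithful; hence $\beta(c) \in \iota(\mathrm{Ind}(D))$, as required.

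The hard part is condition (iii) in the choice of $\kappa$: absorbing into a single cardinal the size of the filtered diagrams representing $\beta$ on all $\kappa$-compact inputs. This requires a careful closure argument in the style of HTT~5.4~\cite{doi:10.1515/9781400830558}: iterate the operation ``enlarge $\kappa$ to absorb the ind-representations of $\beta(c)$ for $c \in \mathcal C^\kappa$'', together with condition (ii), and pass to a sufficiently large fixed point. Once such a $\kappa$ is in hand the remainder of the proof is purely formal, resting only on the adjunctions $\beta \dashv \varepsilon$ and $\iota \dashv r$ and on the continuity-given identity $\varepsilon\beta \simeq \mathrm{id}_{\mathcal C}$.
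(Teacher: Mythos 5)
Your argument is correct in outline but takes a genuinely different route from the paper's. You construct the fully faithful left adjoint by \emph{factoring} the globally defined $\beta : \mathcal C \to \mathrm{Ind}(\mathcal C)$ through $\mathrm{Ind}(D)$ for a carefully chosen $D$, which forces you into the cardinal bookkeeping of condition (iii); this is close in spirit to the original Johnstone--Joyal argument and has the virtue of exhibiting the left adjoint explicitly as a corestriction of $\beta$. The paper instead takes $D$ to be the closure under finite limits and colimits of \emph{any} small dense subcategory, observes that density gives a fully faithful right adjoint $\alpha : \mathcal C \to \mathrm{Ind}(D)$, and then produces the left adjoint non-constructively by the adjoint functor theorem: $\varepsilon : \mathrm{Ind}(D) \to \mathcal C$ preserves limits because it factors through the limit-preserving inclusion $\mathrm{Ind}(D) \subset \mathrm{Ind}(\mathcal C)$ followed by the (continuity-given) limit-preserving $\varepsilon : \mathrm{Ind}(\mathcal C) \to \mathcal C$. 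Full faithfulness of the left adjoint is then free from full faithfulness of $\alpha$ via the adjoint triple. The paper's route buys you freedom in the choice of $D$ (no absorption condition) and avoids all cardinal arithmetic; yours buys an explicit description of $\beta$ on $\mathrm{Ind}(D)$, which is what one actually uses downstream when computing with wavy arrows.

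Two caveats on your write-up. First, the assertion that ``since $\kappa$ is uncountable, finite limits preserve $\kappa$-compactness'' is too strong for a general presentable $\mathcal C$: closure of $\mathcal C^\kappa$ under finite limits holds only for suitably large $\kappa$ (a proper class of such $\kappa$ exists, so this is repairable, but it must be folded into the choice of $\kappa$ rather than deduced from uncountability alone). Second, the iteration in your closure argument for (iii) can be short-circuited: once $\kappa$ absorbs the ind-presentations of $\beta(c)$ for $c$ in a fixed small set of generators, the cocontinuity of $\beta$ together with the closure of the essential image of $\mathrm{Ind}(\mathcal C^\kappa)$ under colimits already gives $\beta(\mathcal C) \subseteq \iota(\mathrm{Ind}(D))$, with no fixed-point passage needed. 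You should also record that $\varepsilon_D$ has a right adjoint (adjoint functor theorem, both categories being presentable), which is then automatically fully faithful by the same adjoint-triple lemma you invoke; the definition of a standard presentation asks for the full triple.
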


\begin{proof}
Because $\mathcal C$ is presentable, there exists a small
and dense full subcategory $D \subset \mathcal C$. We can then take $D'$ the
smallest full subcategory of $\mathcal C$ containing $D$ and closed in
$\mathcal C$ under finite limits and colimits. As such, $D'$ is dense in
$\mathcal C$ so that the evaluation functor $\varepsilon : \mathrm{Ind}(D') \to
\mathcal C$ has a fully faithful right adjoint $\alpha : \mathcal C \to
\mathrm{Ind}(D')$.

As $\varepsilon : \mathrm{Ind}(\mathcal C) \to \mathcal C$ is continuous and
$\mathrm{Ind}(D') \subset \mathrm{Ind}(\mathcal C)$ commutes with limits, we
deduce that $\varepsilon : \mathrm{Ind}(D') \to \mathcal C$ is continuous and
then has a left adjoint $\beta$ because $\mathrm{Ind}(D')$ and $\mathcal C$ are
presentable.
\end{proof}

\begin{proposition}\label{Ind=Cont}
Let $\mathcal D$ be an $\infty$\=/category.
Then $\mathrm{Ind}(\mathcal D)$ is continuous.
\end{proposition}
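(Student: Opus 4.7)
The plan is to build an explicit left adjoint $\beta$ to the colimit functor $\varepsilon : \mathrm{Ind}(\mathrm{Ind}(\mathcal{D})) \to \mathrm{Ind}(\mathcal{D})$ and to verify the adjunction by combining the standard $\mathrm{Hom}$-formula for $\mathrm{Ind}$-categories with the compactness of the objects of $\mathcal{D}$ inside $\mathrm{Ind}(\mathcal{D})$.

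First I would define $\beta$ via the universal property of $\mathrm{Ind}$. Since $\mathrm{Ind}(\mathcal{D})$ is the free cocompletion of $\mathcal{D}$ under filtered colimits, there is an essentially unique filtered-colimit-preserving functor $\beta : \mathrm{Ind}(\mathcal{D}) \to \mathrm{Ind}(\mathrm{Ind}(\mathcal{D}))$ extending the composite of Yoneda embeddings $\mathcal{D} \hookrightarrow \mathrm{Ind}(\mathcal{D}) \hookrightarrow \mathrm{Ind}(\mathrm{Ind}(\mathcal{D}))$. Concretely, an object $X \in \mathrm{Ind}(\mathcal{D})$ comes equipped with its canonical filtered diagram $\mathcal{D}_{/X} \to \mathcal{D}$ whose colimit, computed in $\mathrm{Ind}(\mathcal{D})$, recovers $X$; then $\beta(X)$ is this same diagram viewed as a \emph{formal} filtered colimit inside $\mathrm{Ind}(\mathrm{Ind}(\mathcal{D}))$. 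By construction $\varepsilon \circ \beta \simeq \mathrm{Id}$ tautologically.

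To establish the adjunction $\mathrm{Map}(\beta(X), Y) \simeq \mathrm{Map}(X, \varepsilon Y)$, I would write $X \simeq \varinjlim_i d_i$ with $d_i \in \mathcal{D}$ and $Y \simeq \varinjlim_j Y_j$ as a formal filtered colimit in $\mathrm{Ind}(\mathrm{Ind}(\mathcal{D}))$ with $Y_j \in \mathrm{Ind}(\mathcal{D})$. The $\mathrm{Hom}$-formula for $\mathrm{Ind}$-categories gives
\[
    \mathrm{Map}_{\mathrm{Ind}(\mathrm{Ind}(\mathcal{D}))}(\beta(X), Y)
    \simeq \varprojlim_i \varinjlim_j \mathrm{Map}_{\mathrm{Ind}(\mathcal{D})}(d_i, Y_j) \thinspace ,
\]
while on the other side $\varepsilon Y \simeq \varinjlim_j Y_j$ is computed inside $\mathrm{Ind}(\mathcal{D})$, whence
\[
    \mathrm{Map}_{\mathrm{Ind}(\mathcal{D})}(X, \varepsilon Y)
    \simeq \varprojlim_i \mathrm{Map}_{\mathrm{Ind}(\mathcal{D})}(d_i, \varinjlim_j Y_j)
    \simeq \varprojlim_i \varinjlim_j \mathrm{Map}_{\mathrm{Ind}(\mathcal{D})}(d_i, Y_j) \thinspace ,
\]
the crucial second equivalence being the defining property of $d_i \in \mathcal{D}$ as a compact object of $\mathrm{Ind}(\mathcal{D})$. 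Both sides agree and the comparison is natural in $X$ and $Y$.

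The main obstacle will be the size bookkeeping and coherence: $\mathrm{Ind}(\mathrm{Ind}(\mathcal{D}))$ is very large, and what reads as a hand-calculation must be upgraded to an actual equivalence of mapping spaces in a functorial way. I expect to handle this either by appealing directly to the mapping-space formula for $\mathrm{Ind}$ (HTT, Proposition 5.3.5.10) to obtain the equivalence at once, or by invoking the universal property of $\mathrm{Ind}(\mathcal{D})$ to reduce the check of the adjunction to the representables $d_i \in \mathcal{D}$, where both sides become literally $\varinjlim_j \mathrm{Map}(d_i, Y_j)$.
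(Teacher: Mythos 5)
Your proposal is correct and follows essentially the same route as the paper: both define $\beta$ as the filtered-colimit-preserving extension of the double Yoneda embedding $\mathcal D \hookrightarrow \mathrm{Ind}(\mathcal D) \hookrightarrow \mathrm{Ind}(\mathrm{Ind}(\mathcal D))$, observe $\varepsilon\beta \simeq \mathrm{Id}$, and verify the adjunction on mapping spaces via the $\mathrm{Ind}$-hom formula together with the compactness of objects of $\mathcal D$ in $\mathrm{Ind}(\mathcal D)$. The paper merely expands one further level (writing each $Y_j$ as a formal colimit of objects of $\mathcal D$), which is a cosmetic difference.
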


\begin{proof}
We denote a generic object of
$\mathrm{Ind}(\mathrm{Ind}(\mathcal D))$ as $``\varinjlim_I''
`\varinjlim_{J_i}' d_{ij}$.

Then, the functor $\alpha : \mathrm{Ind}(\mathcal D) \to
\mathrm{Ind}(\mathrm{Ind}(\mathcal D))$ right adjoint to $\varepsilon$ is given
by $``\varinjlim'' d_i \mapsto `\varinjlim' d_i$.

We claim that the left adjoint $\beta$ is given by sending $``\varinjlim'' d_i$
in $\mathrm{Ind}(\mathcal D)$ on $``\varinjlim'' d_i$ in
$\mathrm{Ind}(\mathrm{Ind}(\mathcal D))$, that is $\beta =
\mathrm{Ind}(\alpha)$.  We have a unit transformation of $(\beta,
\varepsilon)$: $\mathrm{Id} \simeq \varepsilon \beta$.  So we can check the
adjunction on mapping spaces.  Any $d \in \mathcal D$ is an
$\omega$-compact object of $\mathrm{Ind}(\mathcal D)$, so that for any $d =
``\varinjlim_i d_i''$, $\beta(d)$ is a formal colimit of $\omega$-compact
objects of $\mathrm{Ind}(D)$. Let $a = ``\underset{j \in J}{\varinjlim}''
`\underset{k \in K_j}{\varprojlim}' d_{jk}$. Then we have:
\[
	\mathrm{Map}(\beta(d), a) \simeq \underset{j \in J}{\varinjlim}\thinspace
	\underset{k \in K_j}{\varinjlim}\thinspace \underset{i \in I}{\varprojlim}
	\thinspace\mathrm{Map}(d_i, d_{jk}) \simeq \mathrm{Map}(d, \varepsilon(a))
	\thinspace.
\]
\end{proof}

\begin{proposition}%
\label{RetractOfCont}
Any retract by $\omega$\=/continuous
functors of a continuous $\infty$\=/category is continuous.
\end{proposition}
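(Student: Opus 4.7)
The plan is to transport the adjunction $\beta_{\mathcal C} \dashv \varepsilon_{\mathcal C}$ witnessing continuity of $\mathcal C$ through the retract structure. Let $i \colon \mathcal D \to \mathcal C$ and $r \colon \mathcal C \to \mathcal D$ be the $\omega$-continuous functors satisfying $r \circ i \simeq \mathrm{Id}_{\mathcal D}$. Applying the $\mathrm{Ind}$-construction, which is functorial on $\omega$-continuous maps, yields $\mathrm{Ind}(i)$ and $\mathrm{Ind}(r)$ with $\mathrm{Ind}(r) \circ \mathrm{Ind}(i) \simeq \mathrm{Id}$; moreover, the $\omega$-continuity of $i$ and $r$ yields the commutation equivalences
\[
	\varepsilon_{\mathcal C} \circ \mathrm{Ind}(i) \simeq i \circ \varepsilon_{\mathcal D}
	\qquad \text{and} \qquad
	\varepsilon_{\mathcal D} \circ \mathrm{Ind}(r) \simeq r \circ \varepsilon_{\mathcal C} \thinspace ,
\]
since passing from a formal filtered colimit to its value commutes with any functor that preserves filtered colimits. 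Together these data exhibit $\varepsilon_{\mathcal D}$ as a retract of $\varepsilon_{\mathcal C}$ inside the arrow $\infty$-category of $\widehat{\mathcal C\mathrm{at}}$.

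The natural candidate left adjoint to $\varepsilon_{\mathcal D}$ is then
\[
	\beta_{\mathcal D} := \mathrm{Ind}(r) \circ \beta_{\mathcal C} \circ i \thinspace .
\]
Unit and counit transformations for $\beta_{\mathcal D} \dashv \varepsilon_{\mathcal D}$ are produced by whiskering those of $\beta_{\mathcal C} \dashv \varepsilon_{\mathcal C}$: the unit $\eta_{\mathcal C}$ provides
\[
	\mathrm{Id}_{\mathcal D} \simeq r \circ i \xrightarrow{\;r \thinspace \eta_{\mathcal C}\thinspace  i\;} r \circ \varepsilon_{\mathcal C} \beta_{\mathcal C} \circ i \simeq \varepsilon_{\mathcal D} \beta_{\mathcal D} \thinspace ,
\]
and the counit $\epsilon_{\mathcal C}$ provides
\[
	\beta_{\mathcal D} \varepsilon_{\mathcal D} \simeq \mathrm{Ind}(r) \beta_{\mathcal C} \varepsilon_{\mathcal C} \mathrm{Ind}(i) \xrightarrow{\;\mathrm{Ind}(r)\thinspace \epsilon_{\mathcal C} \thinspace \mathrm{Ind}(i)\;} \mathrm{Ind}(r) \circ \mathrm{Ind}(i) \simeq \mathrm{Id} \thinspace .
\]

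The main obstacle is verifying the triangle identities for $(\beta_{\mathcal D}, \varepsilon_{\mathcal D})$. At the strict $2$-categorical level, these reduce almost tautologically to those for $(\beta_{\mathcal C}, \varepsilon_{\mathcal C})$ together with the retraction equivalence $r \circ i \simeq \mathrm{Id}$: this is the standard fact that, in an arrow $2$-category, a retract of a $1$-morphism admitting a left adjoint admits one itself. The only genuine work needed in the $\infty$-setting is the coherent bookkeeping of the equivalence data attached to the retract and to the two commutation squares above. Since admitting a left adjoint is a property rather than additional structure, it suffices to arrange these coherences on the level of homotopy categories together with the standard higher compatibilities coming from the functoriality of $\mathrm{Ind}$ on $\omega$-continuous functors.
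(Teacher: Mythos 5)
There is a genuine gap, and it sits exactly at the point you wave through. Your candidate $\beta_{\mathcal D} = \mathrm{Ind}(r)\circ\beta_{\mathcal C}\circ i$ and its whiskered unit and counit are the same as the paper's (there the candidate is called $\theta$ and the section is called $s$), but the triangle identities do \emph{not} reduce tautologically to those of $(\beta_{\mathcal C},\varepsilon_{\mathcal C})$. Only one of the two composites is homotopic to the identity; when you unwind the other one, $\varepsilon_{\mathcal D} \to \varepsilon_{\mathcal D}\beta_{\mathcal D}\varepsilon_{\mathcal D} \to \varepsilon_{\mathcal D}$, the retraction data appears in the wrong order and you pick up a factor of the form $\mathrm{Ind}(i)\circ\mathrm{Ind}(r)$ rather than $r\circ i$, which is not the identity. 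So $\beta_{\mathcal D}$ is only a \emph{weak} left adjoint. The ``standard fact'' you invoke --- that in an arrow $2$-category a retract of a $1$-morphism admitting a left adjoint admits one --- is not a formal consequence of whiskering: it is true only when idempotent $2$-cells split in the relevant hom-categories, and its proof requires actually splitting one. Declaring that the rest is ``coherent bookkeeping'' therefore skips the one nontrivial step.

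The paper's proof supplies exactly this missing ingredient: the failed triangle composite is an idempotent endomorphism of $\theta$ in $[\mathcal D, \mathrm{Ind}(\mathcal D)]$, and since that $\infty$-category admits filtered colimits, its idempotents split (HTT, Corollary 4.4.5.16). Writing $\theta \to \beta \to \theta$ for such a splitting and conjugating the unit and counit by the splitting maps produces new transformations for which \emph{both} triangle identities hold, so that $\beta$ (not $\theta$) is the genuine left adjoint to $\varepsilon_{\mathcal D}$. Your argument becomes correct once you add this idempotent-splitting step; without it, the conclusion that $\varepsilon_{\mathcal D}$ has a left adjoint is not established.
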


\begin{proof}
Let $r : \mathcal C \to \mathcal D$ be a retraction by
$\omega$\=/continuous functors and suppose $\mathcal C$ is continuous. Let $s$
be an $\omega$\=/continuous section of $r$. Because both commute with filtered
colimits, we have $\varepsilon_\mathcal D \circ \mathrm{Ind}(r) \simeq r \circ
\varepsilon_\mathcal C$ and $s \circ \varepsilon_\mathcal D \simeq
\varepsilon_{\mathcal C} \circ \mathrm{Ind}(s)$.  This means we get the
following retract diagram:
\[
	\begin{tikzcd}
		\mathrm{Ind}(\mathcal D)
		\arrow[r,"\mathrm{Ind}(s)"] \arrow[d,"\varepsilon_\mathcal D"]
		& \mathrm{Ind}(\mathcal C) \arrow[d,"\varepsilon_\mathcal C"]
		\arrow[r, "\mathrm{Ind}(r)"] & \mathrm{Ind}(\mathcal D)
		\arrow[d,"\varepsilon_\mathcal D"] \\
		\mathcal D \arrow[r,"s"] &  \mathcal C \arrow[r, "r"] & \mathcal D
		\thinspace.
	\end{tikzcd}
\]

Let $\theta = \mathrm{Ind}(r) \circ \beta_\mathcal C \circ s$. The functor
$\theta$ is a good candidate to be the left adjoint to $\varepsilon_\mathcal
D$.  Indeed, from the unit transformation $\mathrm{Id} \simeq
\varepsilon_\mathcal C \circ \beta_\mathcal C$ we get $u : \mathrm{Id} \simeq
\varepsilon_\mathcal D \circ \theta$. From the counit transformation
$\beta_\mathcal C \circ \varepsilon_\mathcal C \to \mathrm{Id}$ we also get a
counit transformation $k : \theta \circ \varepsilon_\mathcal D \to
\mathrm{Id}$. Finally $k \theta \circ \theta u: \theta \to \theta$ is homotopic
to the identity transformation.  Unfortunately $\varepsilon_\mathcal D  k \circ
u  \varepsilon_\mathcal D : \varepsilon_\mathcal D \to \varepsilon_\mathcal D$
is \emph{not} homotopic to the identity transformation (in this case, one
would call $\theta$ a weak adjoint). Instead $\varepsilon_\mathcal D k$ is
idempotent.

Fortunately, the category $[\mathcal D, \mathrm{Ind}(\mathcal D)]$ has all
filtered colimits; thus idempotents
split~\cite[corollary 4.4.5.16]{doi:10.1515/9781400830558}. Let $\theta
\overset{\tau}{\longrightarrow} \beta \overset{\sigma}{\longrightarrow} \theta$
be such a splitting. We get a new counit map $k' = k \circ (\sigma
\varepsilon_\mathcal D) : \beta \varepsilon_\mathcal C \to \mathrm{Id}$ and a
new unit map $u' = (\varepsilon_\mathcal D  \tau) \circ u : \mathrm{Id} \simeq
\varepsilon_\mathcal D \beta$. This time $\varepsilon_\mathcal D k' \circ u'
\varepsilon_\mathcal D$ is homotopic to the unit transformation, as well as $k'
\beta \circ \beta u'$.

So $\beta$ is a left adjoint to $\varepsilon_\mathcal D$, hence $\mathcal D$ is
a continuous $\infty$\=/category.
\end{proof}

\begin{proposition}
A presentable $\infty$\=/category $\mathcal C$
is the $\infty$\=/category of points
of an injective $\infty$\=/topos $\mathcal X$ if and only if $\mathcal C$ is
continuous.
\end{proposition}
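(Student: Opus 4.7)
The plan is to translate the question through \cref{PointInj}, which identifies the $\infty$\=/category of points of an injective $\infty$\=/topos with an $\omega$\=/continuous retract of some presheaf $\infty$\=/category $\mathcal P(D)$. So I need to show that a presentable $\infty$\=/category $\mathcal C$ is such a retract if and only if it is continuous. The forward implication is immediate from the preceding results: $\mathcal P(D)$ is of the form $\mathrm{Ind}(\mathcal P(D)^\omega)$ and hence continuous by \cref{Ind=Cont}, so any $\omega$\=/continuous retract of it is continuous by \cref{RetractOfCont}.

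For the converse, I would begin by invoking \cref{thm:presentation_standard} to obtain a standard presentation: a small $D \subset \mathcal C$ closed under finite limits and colimits, together with a triple adjunction $\beta \dashv \varepsilon \dashv \alpha$ between $\mathrm{Ind}(D)$ and $\mathcal C$ in which $\beta$ and $\alpha$ are fully faithful. The evaluation functor $\varepsilon$ is then both a left and a right adjoint, hence preserves all limits and all colimits, and in particular is $\omega$\=/continuous. I would next argue that $\beta$ is also $\omega$\=/continuous: cocontinuity is automatic as $\beta$ is a left adjoint, while the substantive point is its left exactness, which I would derive along the lines of the classical argument of \textsc{Johnstone} and \textsc{Joyal}---namely that in a continuous presentable $\infty$\=/category the way\=/below relation is stable under finite limits. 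This makes $\mathcal C$ an $\omega$\=/continuous retract of $\mathrm{Ind}(D)$. Finally, because $D$ has finite colimits, $\mathrm{Ind}(D)$ identifies with the $\infty$\=/category of left exact functors $D^{\mathrm{op}} \to \mathcal S$, which is a left exact accessible reflective localisation of $\mathcal P(D)$; both the inclusion and its reflection are then $\omega$\=/continuous. Composing exhibits $\mathcal C$ as an $\omega$\=/continuous retract of $\mathcal P(D)$, and \cref{PointInj} concludes.

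The main obstacle is the left exactness of $\beta$. The rest of the argument is an assembly of the triple adjunction with standard facts about the inclusion $\mathrm{Ind}(D) \subset \mathcal P(D)$; it is precisely this left exactness that genuinely exploits the continuity of $\mathcal C$, while presentability is what allowed a \emph{small} $D$ to be chosen in the first place.
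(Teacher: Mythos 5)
Your overall route is the paper's own: the forward direction via \cref{PointInj}, \cref{Ind=Cont} and \cref{RetractOfCont} is exactly what is done there, and the converse likewise passes through \cref{thm:presentation_standard} and the chain of retractions $\mathcal C \to \mathrm{Ind}(D) \to \mathcal P(D)$. The problem is the step you single out as ``the main obstacle'', the left exactness of $\beta$. In this paper an $\omega$\-/continuous functor is one that preserves filtered colimits --- nothing more. This is forced by the proof of \cref{PointInj}: there $\psi(\mathcal P(D^{\mathrm{op}})) = {[\mathcal P(D^{\mathrm{op}}), \mathcal S]}_\omega$ must recover $\mathcal S[D] = \mathcal P(\overline D)$, i.e.\ \emph{all} presheaves on $\overline D$ and not only the left exact ones, and finite limits in ${[\mathcal A, \mathcal S]}_\omega$ are argued to be pointwise precisely because finite limits commute with filtered colimits in $\mathcal S$, with no exactness assumption on the objects. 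Consequently, to exhibit $\mathcal C$ as an object of $\mathrm{pt}(\mathcal I\mathrm{nj})$ one only needs the section and the retraction to preserve filtered colimits; since $\beta \dashv \varepsilon \dashv \alpha$, both $\beta$ and $\varepsilon$ are left adjoints, hence cocontinuous, and there is nothing left to check. This is all the paper does.

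If left exactness of $\beta$ really were part of the definition, your proof would have a genuine gap: you do not establish it, you only point to ``the classical argument of Johnstone and Joyal'', and that argument is carried out for the continuous categories underlying exponentiable toposes, where the logos structure is available; for an arbitrary continuous presentable $\infty$\-/category there is no reason to expect $\beta$ to be left exact. So the left-exactness discussion should simply be removed: as written it is an unproved claim, and once the definition of $\omega$\-/continuity is read correctly it is also an unnecessary one. The remainder of your argument, including the observation that the inclusion $\mathrm{Ind}(D) \hookrightarrow \mathcal P(D)$ and its reflection are both $\omega$\-/continuous, coincides with the paper's.
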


\begin{proof}
Suppose $\mathcal C \in
\mathrm{pt}(\mathcal I\mathrm{nj})$, then by \cref{PointInj}, we know
that $\mathcal C$ is a retract by $\omega$\=/continuous
functors of an $\infty$\=/category of presheaves $\mathcal P(D)$. But $\mathcal
P(D)$ is finitely presentable, so it is continuous by
\cref{Ind=Cont}, and $\mathcal C$ is continuous by
\cref{RetractOfCont}.

Conversely, assume that $\mathcal C$ is continuous.
Because it is smally
presentable, by \cref{thm:presentation_standard}, we get a standard presentation
$\mathrm{Ind}(D) \to \mathcal C$. In particular,
$\mathcal C$ is a retract by $\omega$\=/continuous
functors of $\mathrm{Ind}(D)$, and $\mathrm{Ind}(D)$ is itself such a retract of
$\mathcal P(D)$, so that $\mathcal C \in
\mathrm{pt}(\mathcal I\mathrm{nj})$.
\end{proof}

\begin{corollary}%
\label{Exp=>Cont}
If $\mathcal X$ is an exponentiable $\infty$\=/topos, then the
$\infty$\=/category $\mathcal S\mathrm{h}(\mathcal X)$ is continuous.
\end{corollary}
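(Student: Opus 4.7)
The plan is to identify $\mathcal S\mathrm{h}(\mathcal X)$ with the $\infty$\=/category of points of a specific injective $\infty$\=/topos, after which the preceding proposition immediately delivers continuity. The natural candidate is $\mathcal W := \mathbb A^{\mathcal X}$, which exists as a particular exponential by \cref{PartExp=>Exp} since $\mathcal X$ is exponentiable.

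To see that $\mathcal W$ is injective, let $m : \mathcal Y \hookrightarrow \mathcal Z$ be any subtopos inclusion. I first claim that $m \times \mathrm{Id}_{\mathcal X}$ is again a subtopos inclusion. Dually, this amounts to showing that $m^{\ast} \otimes \mathrm{Id}_{\mathcal S\mathrm{h}(\mathcal X)}$ is a left exact accessible reflective localisation of $\mathcal S\mathrm{h}(\mathcal Z) \otimes \mathcal S\mathrm{h}(\mathcal X)$. By \cref{thm:le_produit_tensoriel_est_le_coproduit_des_logos}, this tensor product is an $\infty$\=/logos and the tensored map is left exact and cocontinuous; by \cref{TensOfLoc} the map is, in addition, a reflective accessible localisation. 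Combining this with the injectivity of $\mathbb A$ (\cref{thm:les_injectifs_sont_les_retracts_d_affines}) and the exponential adjunction, any section of $\mathrm{Map}(\mathcal Z \times \mathcal X, \mathbb A) \to \mathrm{Map}(\mathcal Y \times \mathcal X, \mathbb A)$ transports into a section of $\mathrm{Map}(\mathcal Z, \mathcal W) \to \mathrm{Map}(\mathcal Y, \mathcal W)$, witnessing the injectivity of $\mathcal W$.

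It remains to identify $\mathrm{pt}(\mathcal W)$ with $\mathcal S\mathrm{h}(\mathcal X)$. Upgrading the defining adjunction of $\mathcal W$ to $\infty$\=/categories of morphisms (in the $(\infty, 2)$\=/categorical framework alluded to in the preceding remark), the universal property of the free $\infty$\=/logos on one generator yields
\[
	\mathrm{pt}(\mathcal W) \simeq [\mathcal X, \mathbb A]_{\mathcal T\mathrm{op}}
	\simeq {[\mathcal S[X], \mathcal S\mathrm{h}(\mathcal X)]}^{\mathrm{lex}}_{\mathrm{cc}} \simeq \mathcal S\mathrm{h}(\mathcal X)\thinspace.
\]
The preceding proposition then gives that $\mathcal S\mathrm{h}(\mathcal X) \simeq \mathrm{pt}(\mathcal W)$ is continuous. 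The main technical hurdle is checking that products with $\mathcal X$ preserve subtopos inclusions; this relies crucially on the interplay between the tensor product of presentable $\infty$\=/categories and the coproduct structure on $\mathcal L\mathrm{og}$ established earlier.
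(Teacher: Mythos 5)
Your proof is correct and follows essentially the same route as the paper: both show that $\mathbb A^{\mathcal X}$ is injective because $\mathbb A$ is injective and $(-)\times\mathcal X$ preserves inclusions, then identify $\mathrm{pt}(\mathbb A^{\mathcal X})$ with $\mathcal S\mathrm{h}(\mathcal X)$ and invoke the characterisation of continuous $\infty$\=/categories as points of injective $\infty$\=/toposes. The only difference is that you spell out, via \cref{TensOfLoc} and \cref{thm:le_produit_tensoriel_est_le_coproduit_des_logos}, why the product with $\mathcal X$ preserves subtopos inclusions, a step the paper leaves implicit.
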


\begin{proof}
By \cref{thm:les_injectifs_sont_les_retracts_d_affines} the $\infty$\=/topos
$\mathbb A$ is injective and the functor $(-) \times
\mathcal X$ preserves inclusions, so ${\mathbb A}^{\mathcal X}$ is also
injective.  Now, by definition of $\mathbb A$, we have the equivalence of
$\infty$\=/categories
$\mathrm{pt}(\mathbb A^{\mathcal X})
\simeq  \mathcal S\mathrm{h}(\mathcal X)$
which implies the result.
\end{proof}

\subsection{\texorpdfstring{$\omega$}{omega}-continuous sheaves}

Let $X$ be a locally quasi-compact and Hausdorff topological space and let
$\mathcal C$ be an $\infty$\=/category where filtered colimts are left exact.
Then the $\infty$\=/category of $\mathcal C$-valued sheaves on $X$ has an
alternative description%
~\cite[Corollary 7.3.4.10]{doi:10.1515/9781400830558}:
it is the $\infty$\=/category of functors $\mathcal F : \mathcal K^\mathrm{op}
\to \mathcal C$, where $\mathcal K$ is the poset of compact subsets of $X$,
that preserve finite limits and some filtered colimits. We call 
such sheaves $\omega$\=/continuous sheaves.

We wish to prove that, more generally the category of $\mathcal C$-valued
sheaves on an exponentiable $\infty$\=/topos can be described with small
colimits and finite limits condition instead of small limits conditions.

Let us recall the following definition.
\begin{definition}
By an idempotent comonad on an
$\infty$\=/category $\mathcal C$, we
mean the following data: an endofunctor $W : \mathcal C \to \mathcal C$
together with a natural transformation $\varepsilon : W \Rightarrow
\mathrm{Id}_{\mathcal C}$ such that both $\varepsilon W : W^2 \Rightarrow W$
and $W\varepsilon : W^2 \Rightarrow W$ are point-wise equivalences of
endofunctors. This data is
equivalent to the data of the coreflective subcategory of fixed points of
$W$ inside $\mathcal C$%
~\cite[Proposition 5.2.7.4]{doi:10.1515/9781400830558}.
\end{definition}

\subsubsection{\texorpdfstring{$\omega$}{omega}-continuous sheaves of spaces}

Given an exponentiable $\infty$\=/topos $\mathcal X$, as $\mathcal
S\mathrm{h}(\mathcal X)$ is a continuous $\infty$\=/category we have a standard
presentation:
\[
	\begin{tikzcd}
		\mathrm{Ind}(D) \arrow[r, "\varepsilon" description] &
		\mathcal S\mathrm{h}(\mathcal X)\ . \arrow[l, "\alpha", shift left=2]
		\arrow[l, "\beta", shift right=2,swap]
	\end{tikzcd}
\]
We then obtain an idempotent cocontinuous
comonad $W = \beta \varepsilon$ on $\mathrm{Ind}(D)$ and an identification
between $\mathcal S\mathrm{h}(\mathcal X)$ and the $\infty$\=/category 
$\mathrm{Fix}(W)$ of fixed points of $W$ in $\mathrm{Ind}(D)$.

\begin{definition}
The idempotent comonad $W : \mathrm{Ind}(D) \to
\mathrm{Ind}(D)$ is cocontinuous, we write 
$w : D \nrightarrow D$ 
for the corresponding bimodule. That is for $(a,b) \in D^\mathrm{op}\times
D$, we set
$w(a,b) = \mathrm{Map}_D(a, \beta \varepsilon \, b)$.
\end{definition}

\begin{remark}
An object of $w(a,b)$ is what is called a
\emph{wavy arrow} and denoted
$a \rightsquigarrow b$
in \emph{Continuous categories and
exponentiable toposes}%
~\cite{doi:10.1016/0022-4049(82)90083-4}.
\end{remark}

\begin{proposition}%
\label{ContinuousSheaves}
Let $\mathcal X$
be an exponentiable $\infty$\=/topos together with a standard presentation of
its associated $\infty$\=/logos. Then
$\mathcal S\mathrm{h}(\mathcal X)$ is equivalent to the
$\infty$\=/category of left exact functors
$\mathcal F : D^\mathrm{op} \to \mathcal S$ satisfying the condition:
\[
	\mathcal F(a) \simeq \int_{b\thinspace \in\thinspace D}
	w(a,b) \times \mathcal F(b)\thinspace,
\]
for all $a \in D$.
\end{proposition}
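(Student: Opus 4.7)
The plan is to realise $\mathcal S\mathrm{h}(\mathcal X)$ as the fixed points of the idempotent comonad $W$ on $\mathrm{Ind}(D)$, to describe $\mathrm{Ind}(D)$ concretely as left exact presheaves, and to translate the fixed-point condition into the desired coend identity using the results of \cref{coends}.

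First I would observe that since $\beta$ is a fully faithful left adjoint, $\varepsilon \beta \simeq \mathrm{Id}_{\mathcal S\mathrm{h}(\mathcal X)}$, so $W = \beta\varepsilon$ satisfies $W^2 \simeq W$ and its counit $W \to \mathrm{Id}_{\mathrm{Ind}(D)}$ endows it with the structure of an idempotent comonad whose coreflective subcategory of fixed points coincides with the essential image of $\beta$, that is, $\mathcal S\mathrm{h}(\mathcal X)$. Next, since $D$ is closed in $\mathcal C$ under finite colimits, the inclusion $\mathrm{Ind}(D) \hookrightarrow \mathcal P(D)$ identifies $\mathrm{Ind}(D)$ with the full subcategory of functors $D^{\mathrm{op}} \to \mathcal S$ preserving finite limits (HTT 5.3.5.4). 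So it remains to prove that for every $F \in \mathrm{Ind}(D)$ and every $a \in D$,
\[
	W(F)(a) \simeq \int_{b\thinspace\in\thinspace D} w(a,b) \times F(b)\thinspace.
\]

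To establish this coend formula I would introduce the left Kan extension $\widetilde W : \mathcal P(D) \to \mathcal P(D)$ of the composite $D \hookrightarrow \mathrm{Ind}(D) \xrightarrow{W} \mathrm{Ind}(D) \hookrightarrow \mathcal P(D)$ along the Yoneda embedding. On representables $\widetilde W(h_b) = W(h_b) = \beta(b)$, which as a presheaf satisfies $\beta(b)(a) = \mathrm{Map}_{\mathrm{Ind}(D)}(h_a,\beta(b)) = w(a,b)$ by the very definition of the bimodule. Applying the corollary that follows \cref{CoendAsColim} then yields $\widetilde W(G)(a) \simeq \int_{b \in D} w(a,b) \times G(b)$ for every $G \in \mathcal P(D)$. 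It remains to verify that $\widetilde W$ and $W$ agree on $\mathrm{Ind}(D)$: both are filtered-colimit-preserving functors $\mathrm{Ind}(D) \to \mathcal P(D)$ and they coincide on the generating subcategory $D$, so they are naturally equivalent by the universal property of $\mathrm{Ind}(D)$ as the free filtered cocompletion of $D$.

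Combining these three observations, $\mathcal S\mathrm{h}(\mathcal X) \simeq \mathrm{Fix}(W)$ is identified with the full subcategory of $\mathcal P(D)$ spanned by the left exact functors satisfying the coend condition, which is exactly the claim. The main technical obstacle I anticipate is precisely the identification $W \simeq \widetilde W|_{\mathrm{Ind}(D)}$: cocontinuity of $W$ on $\mathrm{Ind}(D)$ is strictly weaker than cocontinuity on $\mathcal P(D)$, since the inclusion $\mathrm{Ind}(D) \hookrightarrow \mathcal P(D)$ only preserves filtered colimits; one must therefore be careful to argue that the ambient coend expression computed inside $\mathcal P(D)$ really reproduces $W(F)$ on ind-objects, rather than merely on representables.
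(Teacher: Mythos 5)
Your proposal is correct and follows essentially the same route as the paper's proof: both identify $\mathcal S\mathrm{h}(\mathcal X)$ with the fixed points of the idempotent comonad $W = \beta\varepsilon$, realise the coend $\int_b w(-,b)\times\mathcal F(b)$ as the left Kan extension of $iW|_D$ along the Yoneda embedding, and then match it with $iW$ on $\mathrm{Ind}(D)$ by checking agreement on $D$ together with preservation of filtered colimits. The subtlety you flag at the end (that $i$ only preserves filtered colimits, so the comparison must be made via filtered-colimit generation of $\mathrm{Ind}(D)$ by $D$ rather than via full cocontinuity) is exactly the point the paper's proof also leans on, so there is no gap.
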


\begin{proof}
Let $i : \mathrm{Ind}(D) \to \mathcal P(D)$ be the canonical
embedding and write $(w \otimes -)$ for the left Kan extension of
$w : D \to \mathcal P(D)$ along $D \to \mathcal P(D)$.  That is for
$\mathcal F : D^\mathrm{op} \to \mathcal S$ we have: 
\[
	w \underset{D}{\otimes} \mathcal F = \int_{b\thinspace \in\thinspace D} w(-,b)
	\times \mathcal F(b)\thinspace.
\]

Now suppose $\mathcal F$ is a left exact functor, then we claim that
$w\otimes
\mathcal F \simeq i W \mathcal F$. Indeed, the comonad $W$ is cocontinuous,
hence it coincides with the left Kan extension of its own restriction to $D$.
Furthermore, the embedding $i$ commutes with filtered colimits and $D$
generates $\mathrm{Ind}(D)$ under filtered colimits, hence $i W$ is also a left
Kan extension of its restriction to $D$.

The next step is to show that the two functors $(w \otimes -)$ and
$iW$ coincide on $D$.
This is true by definition as $w(-,b) = iW b$. The conclusion is that $W$ is a
restriction to $\mathrm{Ind}(D)$ of the functor $(w \otimes -)$. Because of
this, we can deduce that 
$i(\mathrm{Fix}(W)) \simeq \mathrm{Fix}(w \otimes -) \cap i(\mathrm{Ind}(D))$
which proves the theorem: the functor
$\beta i : \mathcal S\mathrm{h}(\mathcal X)
\to \mathrm{Fix}(w \otimes -) \cap i(\mathrm{Ind}(D))$
is an equivalence of $\infty$\=/categories.
\end{proof}

\begin{definition}
Let $(w \otimes -)$ be the left Kan extension of $w : D \to \mathcal
P(D)$ along the Yoneda embedding $y : D \to \mathcal P(D)$.  We call an object
of $\mathrm{Fix}(w_!) \cap i(\mathrm{Ind}(D))$ an
$\omega$\=/continuous sheaf (of
spaces). In other words, an $\omega$\=/continuous sheaf of spaces is a
left exact functor
$D^\mathrm{op} \to \mathcal S$ such that:
\[
	\mathcal F(a) \simeq \int_{b\thinspace\in\thinspace D} w(a,b)
	\times \mathcal F(b)\thinspace,
\]
for all $a \in D$.
\end{definition}

\subsubsection{\texorpdfstring{$\mathcal C$}{C}-valued sheaves}

Let $\mathcal X$ be an $\infty$\=/topos and $\mathcal C$ be any
$\infty$\=/category.  The usual definition of $\mathcal C$\=/valued sheaves on
$\mathcal X$ is the following:
$\mathrm{Sh}(\mathcal X, \mathcal C)
= {\left[{\mathcal S\mathrm{h}(\mathcal X)}^\mathrm{op},
\mathcal C\right]}^{\thinspace\mathrm{c}}$.
However, in the case where $\mathcal C$ is a
bicomplete $\infty$\=/category, we wish to show there is another useful
expression to work with:
$\mathrm{Sh}(\mathcal X, \mathcal C) \simeq \mathcal S\mathrm{h}(\mathcal X)
\otimes \mathcal C$.

This result is a slightly different version of \cref{TensorProductFormula}
where the assumptions on the two $\infty$\=/categories are weakened;
essentially by replacing the presentability condition by a
small generation one.
We begin with the simplest case.

\begin{lemma}
Let $D$ be a small $\infty$\=/category and $\mathcal C$ be a
bicomplete $\infty$\=/category, then
${\left[{\mathcal P(D)}^\mathrm{op}, \mathcal C\right]}^{\thinspace\mathrm{c}}
\simeq \mathcal P(D) \otimes \mathcal C$.
\end{lemma}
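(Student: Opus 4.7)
The plan is to factor the desired equivalence through the intermediate $\infty$-category $[D^\mathrm{op}, \mathcal C]$, establishing
$$
\mathcal P(D) \otimes \mathcal C \simeq [D^\mathrm{op}, \mathcal C] \simeq {[\mathcal P(D)^\mathrm{op}, \mathcal C]}^\mathrm{c}\thinspace.
$$

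For the first equivalence I would compare universal properties. The $\mathcal S$-tensoring of the cocomplete $\mathcal C$ gives rise to a functor $\mathcal P(D) \times \mathcal C \to [D^\mathrm{op}, \mathcal C]$ sending $(X, c)$ to the pointwise tensor $d \mapsto X(d) \otimes c$. Because colimits in $[D^\mathrm{op}, \mathcal C]$ are pointwise, evaluation $\mathcal P(D) \to \mathcal S$ is cocontinuous, and the $\mathcal S$-tensoring is cocontinuous in each variable, this functor is cocontinuous in each variable; the universal property of the tensor product thus produces a cocontinuous comparison $\Psi : \mathcal P(D) \otimes \mathcal C \to [D^\mathrm{op}, \mathcal C]$. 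To show $\Psi$ is an equivalence, I would verify that for any cocomplete $\mathcal E$ both sides corepresent the $\infty$-groupoid of functors $D \times \mathcal C \to \mathcal E$ that are cocontinuous in the $\mathcal C$ variable. For the left-hand side this combines the universal property of $\otimes$ with the free cocompletion property of $\mathcal P(D)$. For the right-hand side, the enriched co-Yoneda lemma of \cref{coYoneda} presents every $\xi \in [D^\mathrm{op}, \mathcal C]$ as a coend $\int_{d \in D} y(d) \otimes \xi(d)$, so the objects $y(d) \otimes c$ are dense, and any cocontinuous functor out of $[D^\mathrm{op}, \mathcal C]$ is computed from its restriction to them via \cref{CoendAsColim}.

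For the second equivalence I would use restriction and continuous extension along the Yoneda embedding $y : D \hookrightarrow \mathcal P(D)$. The restriction functor $\mathrm{res} : {[\mathcal P(D)^\mathrm{op}, \mathcal C]}^\mathrm{c} \to [D^\mathrm{op}, \mathcal C]$ has an inverse $F \mapsto \tilde F$ defined by $\tilde F(Y) = \varprojlim_{d \in \mathrm{el}(Y)^\mathrm{op}} F(d)$; this is continuous in $Y$ because limits commute with limits, and it restricts to $F$ on representables by the Yoneda lemma. Conversely, any continuous $G : \mathcal P(D)^\mathrm{op} \to \mathcal C$ satisfies $G(Y) \simeq \varprojlim_{d \in \mathrm{el}(Y)^\mathrm{op}} G(y(d))$, because $Y \simeq \varinjlim_{d \in \mathrm{el}(Y)} y(d)$ in $\mathcal P(D)$; so $G$ is determined by its restriction to $D^\mathrm{op}$, and the two operations are mutually inverse.

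The main obstacle is the first equivalence: since $\mathcal C$ is only assumed bicomplete and not presentable, one cannot directly invoke the internal hom of $\mathcal P\mathrm{res}$ that was used in \cref{TensorProductFormula}. The proof must instead rely on the free cocompletion property of $\mathcal P(D)$ together with the coend machinery from Section \ref{coends}, which is precisely what makes the statement specific to the presheaf case.
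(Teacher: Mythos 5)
Your proof is correct, and it reaches the same pivot $[D^\mathrm{op},\mathcal C]$ and uses the same final step (restriction/right Kan extension along Yoneda, valid because $\mathcal C$ is complete) as the paper; the difference lies entirely in how the first equivalence $\mathcal P(D)\otimes\mathcal C\simeq[D^\mathrm{op},\mathcal C]$ is obtained. The paper invokes \cref{thm:dualisabilite_dans_les_categories_presentables}: $\mathcal P(D)$ is dualisable in $\widehat{\mathcal C\mathrm{at}}_\mathrm{cc}$ with dual $\mathcal P(D^\mathrm{op})$, so $\mathcal P(D)\otimes\mathcal C\simeq{[\mathcal P(D^\mathrm{op}),\mathcal C]}_\mathrm{cc}\simeq[D^\mathrm{op},\mathcal C]$ by free cocompletion. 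You instead verify directly that both sides corepresent functors $D\times\mathcal C\to\mathcal E$ cocontinuous in the second variable, using the coend machinery (\cref{coYoneda}, \cref{CoendAsColim}) to show the objects $y(d)\otimes c$ are dense and that the extension of such a functor is given by the coend formula. This is essentially an inlining of the proof of the dualisability theorem, so the mathematical content is the same; what your route buys is self-containedness (the paper's citation is a forward reference to a theorem proved only in the final section), at the cost of having to spell out the density and extension arguments. One small correction: your closing remark that the internal hom of $\mathcal P\mathrm{res}$ is unavailable is true but not the relevant obstruction --- the monoidal structure on $\widehat{\mathcal C\mathrm{at}}_\mathrm{cc}$ is already closed for arbitrary cocomplete $\infty$\=/categories, and it is this internal hom, combined with dualisability rather than \cref{TensorProductFormula}, that the paper exploits.
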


\begin{proof}
By \cref{thm:dualisabilite_dans_les_categories_presentables},
$\mathcal P(D)$ is a dualisable
object of $\widehat{\mathcal C\mathrm{at}}_\mathrm{cc}$;
its dual is $\mathcal
P(D^\mathrm{op})$. Because $\mathcal C$ is supposed to be bicomplete, we now
have the equivalences:
\[
	\mathcal P(D) \otimes \mathcal C \simeq {[\mathcal P(D^\mathrm{op}), \mathcal
	C]}_\mathrm{cc} \simeq [D^\mathrm{op}, \mathcal C]
	\simeq {\left[{\mathcal P(D)}^\mathrm{op},
	\mathcal C\right]}^{\thinspace\mathrm{c}} .
\]
\end{proof}

\begin{definition}
A cocomplete $\infty$\=/category $\mathcal C$ shall be called smally generated
if it admits a small and dense subcategory $D \subset \mathcal C$. Equivalently,
$\mathcal C$ is smally generated when it is a reflective localisation of
an $\infty$\=/category of presheaves on a small $\infty$\=/category.
\end{definition}

\begin{proposition}%
\label{UberTensorProductFormula}
Let $\mathcal A$ and $\mathcal
B$ be two $\infty$\=/categories. Suppose that $\mathcal A$ is cocomplete and
smally generated and $\mathcal B$ is bicomplete, then:

\[
	\mathcal A \otimes \mathcal B
	\simeq {[\mathcal A^\mathrm{op}, \mathcal B]}^\mathrm{c} \ .
\]
\end{proposition}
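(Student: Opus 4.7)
The plan is to bootstrap from the previous lemma, which gives $\mathcal{P}(D)\otimes\mathcal{B}\simeq[\mathcal{P}(D)^{\mathrm{op}},\mathcal{B}]^{\mathrm{c}}$ for every small $\infty$\=/category $D$. Using that $\mathcal{A}$ is smally generated, pick a reflective localisation $L:\mathcal{P}(D)\rightleftarrows\mathcal{A}:i$ with $L$ cocontinuous and $i$ fully faithful, and present $\mathcal{A}$ as $S^{-1}\mathcal{P}(D)$ for a small set of arrows $S$. This identifies $\mathcal{A}$ as the cocontinuous localisation of $\mathcal{P}(D)$ at $S$ inside $\widehat{\mathcal{C}\mathrm{at}}_{\mathrm{cc}}$: cocontinuous functors out of $\mathcal{A}$ are exactly the $S$\=/inverting cocontinuous functors out of $\mathcal{P}(D)$. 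Since $(-)\otimes\mathcal{B}$ is a left adjoint on $\widehat{\mathcal{C}\mathrm{at}}_{\mathrm{cc}}$ (with right adjoint $[\mathcal{B},-]_{\mathrm{cc}}$ by \cref{thm:pr_est_symetrique_fermee}), it preserves this localisation, so $\mathcal{A}\otimes\mathcal{B}$ is the cocontinuous localisation of $\mathcal{P}(D)\otimes\mathcal{B}$ at the class $S\otimes\mathcal{B}=\{s\otimes b:s\in S,\, b\in\mathcal{B}\}$.

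The next step is to identify the $S\otimes\mathcal{B}$\=/local objects on the other side of the lemma. A pure tensor $x\otimes b$ corresponds to the continuous functor $p\mapsto\mathrm{Map}_{\mathcal{P}(D)}(p,x)\otimes b$, where $\otimes$ on the right denotes the tensoring of $\mathcal{B}$ over $\mathcal{S}$. By Yoneda and the tensoring adjunction, a continuous $G:\mathcal{P}(D)^{\mathrm{op}}\to\mathcal{B}$ is local with respect to $s\otimes b$ if and only if $\mathrm{Map}_{\mathcal{B}}(b,G(s))$ is an equivalence; letting $b$ vary and applying Yoneda in $\mathcal{B}$ shows that $G$ is $S\otimes\mathcal{B}$\=/local if and only if it inverts every $s\in S$. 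Since $L^{\mathrm{op}}:\mathcal{P}(D)^{\mathrm{op}}\to\mathcal{A}^{\mathrm{op}}$ is the dual of a reflective localisation, continuous and right adjoint to $i^{\mathrm{op}}$, precomposition along $L^{\mathrm{op}}$ yields a fully faithful embedding $[\mathcal{A}^{\mathrm{op}},\mathcal{B}]^{\mathrm{c}}\hookrightarrow[\mathcal{P}(D)^{\mathrm{op}},\mathcal{B}]^{\mathrm{c}}$ whose essential image is precisely the $S$\=/inverting continuous functors.

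The main obstacle is to upgrade this inclusion to an identification of $[\mathcal{A}^{\mathrm{op}},\mathcal{B}]^{\mathrm{c}}$ with the cocontinuous localisation of $\mathcal{P}(D)\otimes\mathcal{B}$ at $S\otimes\mathcal{B}$: since $\mathcal{B}$ is only bicomplete and not necessarily presentable, \cref{TensOfLoc} does not apply verbatim. The approach is to construct a cocontinuous reflector $R:[\mathcal{P}(D)^{\mathrm{op}},\mathcal{B}]^{\mathrm{c}}\to[\mathcal{A}^{\mathrm{op}},\mathcal{B}]^{\mathrm{c}}$ directly, which under the previous lemma should correspond to $L\otimes\mathrm{Id}_{\mathcal{B}}$, to verify that it is left adjoint to the embedding above and inverts the class $S\otimes\mathcal{B}$ with the expected universal property, and to conclude by uniqueness of cocontinuous localisations in $\widehat{\mathcal{C}\mathrm{at}}_{\mathrm{cc}}$ that $\mathcal{A}\otimes\mathcal{B}\simeq[\mathcal{A}^{\mathrm{op}},\mathcal{B}]^{\mathrm{c}}$.
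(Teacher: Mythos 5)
Your overall strategy coincides with the paper's: present $\mathcal A$ as $S^{-1}\mathcal P(D)$, use the preceding lemma to identify $\mathcal P(D)\otimes\mathcal B$ with ${[{\mathcal P(D)}^{\mathrm{op}},\mathcal B]}^{\mathrm{c}}$, and then identify $\mathcal A\otimes\mathcal B$ with the full subcategory of $S$\-/inverting continuous functors, which is ${[\mathcal A^{\mathrm{op}},\mathcal B]}^{\mathrm{c}}$. Your second paragraph carries out the identification of local objects by a direct Yoneda/tensoring computation where the paper instead embeds everything into ${[{\mathcal P(D)}^{\mathrm{op}}\times\mathcal B^{\mathrm{op}},\mathcal S]}^{\mathrm{c,c}}$ and quotes the proof of HTT 5.5.4.20; both routes work, and your conclusion that $G$ is $(s\otimes b)$\-/local if and only if $\mathrm{Map}_{\mathcal B}(b,G(s))$ is invertible is correct. (One small slip: the continuous functor attached to a pure tensor $x\otimes b$ is $p\mapsto\varprojlim_{d\in\mathrm{el}(p)}\bigl(x(d)\otimes b\bigr)$, not $\mathrm{Map}(p,x)\otimes b$, since tensoring with $b$ does not commute with limits; this does not affect your locality computation, which only needs $\mathrm{Map}(x\otimes b,G)\simeq\mathrm{Map}_{\mathcal B}(b,G(x))$.)

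The genuine issue is your third paragraph, and you have located it accurately: everything hinges on knowing that the localisation of $\mathcal P(D)\otimes\mathcal B$ at $S\boxtimes\mathcal B$ is computed by the full subcategory of local objects, and \cref{TensOfLoc} is stated for presentable $\mathcal B$. But as written your resolution is a programme rather than an argument, and the step you defer is exactly the nontrivial one. There is no off-the-shelf candidate for the reflector $R$: the inclusion $i:\mathcal A\to\mathcal P(D)$ is not cocontinuous, so $i\otimes\mathrm{Id}_{\mathcal B}$ is not defined, and restriction along $i^{\mathrm{op}}$ does not preserve continuity except on $S$\-/inverting inputs, so it does not land in ${[\mathcal A^{\mathrm{op}},\mathcal B]}^{\mathrm{c}}$. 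Constructing $R$ and checking its universal property is precisely the content of the relevant portion of the proof of HA 4.8.1.15, which is what the paper is implicitly relying on when it invokes \cref{TensOfLoc} (note the citation there is to the \emph{proof} of that proposition, the relevant part of which only uses cocompleteness of $\mathcal B$). Alternatively one can bypass the reflector, as the paper's commutative square suggests, by working inside the explicit model of the tensor product as the smallest cocomplete subcategory of bifunctors containing the pure tensors, and checking that ${[\mathcal A^{\mathrm{op}},\mathcal B]}^{\mathrm{c}}$ is such a subcategory and is generated by the pure tensors (this is where small generation of $\mathcal A$ enters). Either way, this last step has to be carried out, not just announced.
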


\begin{proof}
Let $D$ be a small $\infty$\=/category and let $S$ be a large set
or arrows of $\mathcal P(D)$
such that $\mathcal A$ is the subcategory of $S$-local objects of $\mathcal
P(D)$.  Let $f : \mathcal P(D) \times B \to \mathcal P(D) \otimes \mathcal B$
be the canonical map and let $T$ be the large set of all morphisms in $\mathcal
P(D) \otimes \mathcal B$ having the form $f(s \times \mathrm{Id}_{b})$ for
every $s \in S$ and $b \in \mathcal B$. Then by \cref{TensOfLoc}, we
have:
\[
	T^{-1} (\mathcal P(D) \otimes \mathcal B) \simeq \mathcal A
	\otimes \mathcal B\thinspace.
\]
By the previous lemma we have $\mathcal P(D) \otimes \mathcal B \simeq
{\left[{\mathcal P(D)}^\mathrm{op}, \mathcal B\right]}^{\thinspace\mathrm{c}}$
so that $\mathcal A \otimes
\mathcal B$ correspond to the $\infty$\=/category of $T'$-local objects of
${\left[{\mathcal P(D)}^\mathrm{op}, \mathcal B\right]}^{\thinspace\mathrm{c}}$,
where $T'$ is the
large set of all morphisms of the form $f'(s \times \mathrm{Id}_b)$ with $f' :
\mathcal P(D) \times \mathcal B \to {[{\mathcal P(D)}^\mathrm{op}, \mathcal
B]}^\mathrm{c}$ the corresponding canonical map.

We only need to check that the $\infty$\=/category
${[\mathcal A^\mathrm{op}, \mathcal B]}^\mathrm{c}$ is the subcategory
of ${[{\mathcal P(D)}^\mathrm{op}, \mathcal B]}^\mathrm{c}$ made of $T'$-local
objects. For this, we draw the following commutative diagram:
\[
	\begin{tikzcd}
		{\left[{\mathcal P(D)}^\mathrm{op},
		\mathcal B\right]}^{\thinspace\mathrm{c}}
		\arrow["\varphi", r] 
		& {\left[{\mathcal P(D)}^\mathrm{op} \times
		\mathcal B^\mathrm{op}, \mathcal S\right]}^{\thinspace\mathrm{c,c}}
		\\
		{[\mathcal A^\mathrm{op}, \mathcal B]}^\mathrm{c}
		\arrow[u] \arrow[r, "\psi"]
		& {[\mathcal A^\mathrm{op} \times \mathcal B^\mathrm{op},
		\mathcal S]}^\mathrm{c,c} \arrow[u] \thinspace,
	\end{tikzcd}
\]
where all arrows are fully faithful.
Let $T''$ be the large set of objects of $\varphi(T')$ and let $F$ be an object
of ${\left[{\mathcal P(D)}^\mathrm{op},
\mathcal B\right]}^{\thinspace\mathrm{c}}$.
The morphism $F$ is $T'$\=/local
if and only if $\varphi(F)$ is $T''$\=/local and $T''$\=/local objects of
${\left[{\mathcal P(D)}^\mathrm{op}
\times {\mathcal B}^\mathrm{op}, \mathcal S\right]}^{\thinspace\mathrm{c,c}}$
are precisely the objects of ${[\mathcal A^\mathrm{op} \times \mathcal
B^\mathrm{op}, \mathcal S]}^\mathrm{c,c}$
by direct computation (use Yoneda lemma and
the proof of proposition 5.5.4.20 of HTT%
~\cite{doi:10.1515/9781400830558}).
Hence, $\varphi(F)$ is $T''$\=/local if and only if it lies in the image of
$\psi$. We have proved the desired equivalence.
\end{proof}

\begin{corollary}%
\label{SheafTensor}
Let $\mathcal X$ be an $\infty$\=/topos and
$\mathcal C$ be a bicomplete $\infty$\=/category, then:
\[
	\mathrm{Sh}(\mathcal X,
	\mathcal C) \simeq \mathcal S\mathrm{h}(\mathcal X) \otimes \mathcal C
	\thinspace.
\]
\end{corollary}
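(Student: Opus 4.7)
The plan is to recognise this as an immediate instance of \cref{UberTensorProductFormula}. By the definition recalled just before the statement,
\[
	\mathrm{Sh}(\mathcal X, \mathcal C) = {\left[\mathcal S\mathrm{h}(\mathcal X)^{\mathrm{op}}, \mathcal C\right]}^{\thinspace\mathrm{c}},
\]
so it suffices to apply \cref{UberTensorProductFormula} with $\mathcal A = \mathcal S\mathrm{h}(\mathcal X)$ and $\mathcal B = \mathcal C$.

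To do so I need to verify the two hypotheses on $\mathcal A$: that $\mathcal S\mathrm{h}(\mathcal X)$ is cocomplete and smally generated. But by the very definition of an $\infty$\=/logos, there exists a small $\infty$\=/category $D$ together with an accessible left exact reflective localisation $\mathcal P(D) \to \mathcal S\mathrm{h}(\mathcal X)$. Hence $\mathcal S\mathrm{h}(\mathcal X)$ is a reflective localisation of a presheaf $\infty$\=/category on a small $\infty$\=/category, which is precisely the characterisation of smally generated cocomplete $\infty$\=/categories recorded above.

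Since $\mathcal C$ is assumed bicomplete by hypothesis, the proposition applies and yields the desired equivalence
\[
	\mathcal S\mathrm{h}(\mathcal X) \otimes \mathcal C \simeq {\left[\mathcal S\mathrm{h}(\mathcal X)^{\mathrm{op}}, \mathcal C\right]}^{\thinspace\mathrm{c}} = \mathrm{Sh}(\mathcal X, \mathcal C).
\]
There is essentially no obstacle here: all the work has been done in \cref{UberTensorProductFormula}; the only thing to check is the unpacking of definitions, namely that an $\infty$\=/logos satisfies the smally generated cocomplete hypothesis.
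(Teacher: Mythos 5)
Your proof is correct and matches the paper's intent exactly: the paper states this corollary without proof precisely because it is the instance of \cref{UberTensorProductFormula} with $\mathcal A = \mathcal S\mathrm{h}(\mathcal X)$ and $\mathcal B = \mathcal C$, and your verification that an $\infty$\=/logos is cocomplete and smally generated is the only point that needed spelling out.
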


\begin{corollary}%
\label{AdjToTensf}
Let $\mathcal A$ and $\mathcal B$ be two
cocomplete and smally generated $\infty$\=/categories and let $\mathcal C$ be a
bicomplete $\infty$\=/category.  Then for every cocontinuous functor $f :
\mathcal A \to \mathcal B$, the cocontinuous functor $f' = f \otimes
\mathrm{Id}_{\mathcal C}$ has a right adjoint
$f^\ast : {[\mathcal A^\mathrm{op}, \mathcal C]}^\mathrm{c}
\to {[\mathcal B^\mathrm{op},\mathcal C]}^\mathrm{c}$
given by precomposition by $f^\mathrm{op}$.
\end{corollary}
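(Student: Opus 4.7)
The plan is to identify both sides via Proposition~\ref{UberTensorProductFormula} and verify the adjunction on pure tensors through a Yoneda-type computation. Since $f:\mathcal{A}\to\mathcal{B}$ is cocontinuous, $f^\mathrm{op}:\mathcal{A}^\mathrm{op}\to\mathcal{B}^\mathrm{op}$ is continuous, so precomposition with $f^\mathrm{op}$ sends a continuous $G:\mathcal{B}^\mathrm{op}\to\mathcal{C}$ to a continuous $G\circ f^\mathrm{op}:\mathcal{A}^\mathrm{op}\to\mathcal{C}$; this defines the candidate right adjoint $f^\ast:[\mathcal{B}^\mathrm{op},\mathcal{C}]^\mathrm{c}\to[\mathcal{A}^\mathrm{op},\mathcal{C}]^\mathrm{c}$.

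Next, I would trace the canonical map $\mathcal{A}\times\mathcal{C}\to\mathcal{A}\otimes\mathcal{C}$ through the equivalence $\mathcal{A}\otimes\mathcal{C}\simeq[\mathcal{A}^\mathrm{op},\mathcal{C}]^\mathrm{c}$ of Proposition~\ref{UberTensorProductFormula}. Starting from $\mathcal{A}=\mathcal{P}(D)$, where the pure tensor $a\otimes c$ visibly becomes the functor $d\mapsto a(d)\otimes c$ (with $\otimes$ the tensoring of $\mathcal{C}$ over $\mathcal{S}$), then descending through the reflective localisation $\mathcal{P}(D)\to\mathcal{A}$ via Proposition~\ref{TensOfLoc}, one obtains for every $H\in[\mathcal{A}^\mathrm{op},\mathcal{C}]^\mathrm{c}$ the Yoneda-type formula
\[
\mathrm{Map}_{\mathcal{A}\otimes\mathcal{C}}(a\otimes c, H) \simeq \mathrm{Map}_{\mathcal{C}}(c, H(a)),
\]
and analogously for $\mathcal{B}\otimes\mathcal{C}$.

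Armed with this, the adjunction reduces to a chain of equivalences on pure tensors: for $a\in\mathcal{A}$, $c\in\mathcal{C}$, $G\in[\mathcal{B}^\mathrm{op},\mathcal{C}]^\mathrm{c}$,
\begin{align*}
\mathrm{Map}(f'(a\otimes c), G)
&\simeq \mathrm{Map}(f(a)\otimes c, G) \\
&\simeq \mathrm{Map}_{\mathcal{C}}(c, G(f(a))) \\
&\simeq \mathrm{Map}_{\mathcal{C}}(c, (f^\ast G)(a)) \\
&\simeq \mathrm{Map}(a\otimes c, f^\ast G),
\end{align*}
natural in $a\otimes c$ and $G$. Since the pure tensors generate $\mathcal{A}\otimes\mathcal{C}$ under colimits, $f'$ is cocontinuous by construction, and mapping spaces convert colimits in the first variable into limits, this equivalence extends from pure tensors to all of $\mathcal{A}\otimes\mathcal{C}$, yielding the adjunction $f'\dashv f^\ast$.

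The main obstacle is the second step: establishing the Yoneda-type formula in the smally-generated-but-not-necessarily-presentable setting. It is transparent for $\mathcal{A}=\mathcal{P}(D)$, but carrying it through the reflective localisation appearing in the proof of Proposition~\ref{UberTensorProductFormula} requires verifying that the continuous-functor description of $\mathcal{A}\otimes\mathcal{C}$ interacts correctly with the canonical map $\mathcal{A}\times\mathcal{C}\to\mathcal{A}\otimes\mathcal{C}$; this is essentially a compatibility check between the equivalences of Propositions~\ref{TensOfLoc} and~\ref{UberTensorProductFormula}.
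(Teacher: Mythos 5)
Your argument is correct in outline, but it takes a genuinely different route from the paper. You verify the adjunction directly: you establish a Yoneda-type formula $\mathrm{Map}_{\mathcal A\otimes\mathcal C}(a\otimes c,H)\simeq\mathrm{Map}_{\mathcal C}(c,H(a))$ for pure tensors (first on $\mathcal P(D)\otimes\mathcal C$, then descending through the localisation), check the adjunction equivalence on pure tensors, and extend by the fact that both $\mathrm{Map}(f'(-),G)$ and $\mathrm{Map}(-,f^\ast G)$ send colimits to limits and that pure tensors generate --- note that for the extension step you really need the universal property of $\otimes$ applied to functors valued in $\widehat{\mathcal S}^{\mathrm{op}}$, not merely ``generation under colimits'', since a priori you only have objectwise equivalences on the generators and must produce an actual natural comparison map. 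The paper instead sidesteps every mapping-space computation: it embeds $\mathcal A\otimes\mathcal C$ and $\mathcal B\otimes\mathcal C$ into the full presheaf $\infty$\=/categories $[\mathcal A^{\mathrm{op}}\times\mathcal C^{\mathrm{op}},\widehat{\mathcal S}]$ and $[\mathcal B^{\mathrm{op}}\times\mathcal C^{\mathrm{op}},\widehat{\mathcal S}]$, where the adjunction $f_!\dashv f^\ast$ is already known (HTT 4.3.3.7), observes that $f^\ast$ preserves the relevant subcategories (this is exactly where \cref{UberTensorProductFormula} enters) and that $Lf_!$ restricts to $f'$, and concludes because the restriction of an adjunction to full subcategories preserved by both adjoints is still an adjunction. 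The paper's route is shorter and delegates all the work to a known adjunction; yours is more computational but self-contained, makes the unit and counit explicit on generators, and gives as a by-product the useful formula $\mathrm{Map}(a\otimes c,H)\simeq\mathrm{Map}_{\mathcal C}(c,H(a))$, at the cost of the compatibility check you yourself flag between \cref{TensOfLoc} and \cref{UberTensorProductFormula}.
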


\begin{proof}
For this proof, we need to understand concretely how $f'$ is
built. We draw the diagram:

\[
	\begin{tikzcd}
		\mathcal A \otimes \mathcal C
		\arrow[r]
		\arrow[d, "{f'}", swap, shift right]
		& {\left[\mathcal A^\mathrm{op} \times \mathcal C^\mathrm{op},
		\widehat{\mathcal S}\thinspace\right]}^{\thinspace\mathrm{c,c}}
		\arrow[r, shift right]
		\arrow[d, "Lf_!", swap, shift right] 
		& {\left[\mathcal A^\mathrm{op} \times \mathcal C^\mathrm{op},
		\widehat{\mathcal S}\thinspace\right]}
		\arrow[l, shift right]
		\arrow[d, "f_!", swap,  shift right] \\
		\mathcal B \otimes \mathcal C
		\arrow[u, "f^\ast", swap, shift right]
		\arrow[r] 
		& {\left[\mathcal B^\mathrm{op} \times \mathcal C^\mathrm{op},
		\widehat{\mathcal S}\thinspace\right]}^{\thinspace\mathrm{c,c}}
		\arrow[u, "f^\ast", swap, shift right]
		\arrow[r, shift right]
		& {\left[\mathcal B^\mathrm{op} \times
		\mathcal C^\mathrm{op}, \widehat{\mathcal S}\thinspace\right]} \thinspace.
		\arrow[u, "f^\ast", swap, shift right]
		\arrow[l, shift right]
	\end{tikzcd}
\]
With the horizontal arrows going to the right being fully faithful.

By left Kan extension, we get the functor $f_{!}$;
localising it we have $Lf_{!}$. Then by construction of the tensor product,
$Lf_{!}$ sends the subcategory $\mathcal A \otimes \mathcal C$ to $\mathcal B
\otimes \mathcal C$, the restriction of $Lf_{!}$ to $\mathcal A \otimes \mathcal
B$ is the desired $f'$.

Meanwhile, $f^\ast$ is well defined on the right and restricts to the central
column. The key point is that it can also be restricted to the first column
thanks to \cref{UberTensorProductFormula}.

By proposition 4.3.3.7 in HTT%
~\cite{doi:10.1515/9781400830558},
$f_{!}$ is left adjoint to $f^\ast$. This
implies that $Lf_{!}$ is left adjoint to $f^\ast$ and because the restriction of
an adjunction is still an adjunction, we deduce that $f'$ is left adjoint to
$f^\ast$.
\end{proof}

\begin{remark}
\Cref{SheafTensor,AdjToTensf} imply in
particular that for every topological space $X$, there always exists a
sheafification functor adjoint to the natural inclusion:
\[
	\begin{tikzcd}
		\mathcal P\mathrm{Sh}(X) \otimes \mathcal C
		\arrow[r, shift left] & \mathcal S\mathrm{h}(X)
		\otimes \mathcal C \arrow[l, shift left]\thinspace.
	\end{tikzcd}
\]
where $\mathcal P\mathrm{Sh}(X)$ denotes the $\infty$\=/category of
presheaves in $\mathcal S$ on $X$, as long as $\mathcal C$ is bicomplete.
However this sheafification functor is usually not left exact.
\end{remark}

\begin{corollary}%
\label{thm:tensoriser_avec_cclex_preserve_les_lex}
Let $D$ be a small $\infty$\=/category which has all finite colimits and let
$\varphi : \mathcal C \to \mathcal E$ be a left exact and cocontinuous functor
between bicomplete $\infty$\=/categories. Then the functors
$\mathrm{Id}_{\mathrm{Ind}(D)} \otimes \varphi :
\mathrm{Ind}(D) \otimes \mathcal C \longrightarrow
\mathrm{Ind}(D) \otimes \mathcal E$
and
$(\varphi \circ -) : {[D^\mathrm{op}, \mathcal C]}^\mathrm{lex} \longrightarrow
{[D^\mathrm{op}, \mathcal E]}^\mathrm{lex}$
are canonically equivalent. In particular both are left exact and
cocontinuous.
\end{corollary}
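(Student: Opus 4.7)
The plan is to identify $\mathrm{Ind}(D) \otimes \mathcal C$ with ${[D^\mathrm{op}, \mathcal C]}^\mathrm{lex}$ (and similarly for $\mathcal E$) and then to check that, under this identification, $\mathrm{Id}_{\mathrm{Ind}(D)} \otimes \varphi$ is exactly $\varphi \circ -$. The identification itself follows from \cref{UberTensorProductFormula}: since $\mathrm{Ind}(D)$ is cocomplete and smally generated (by the image of $D$) and $\mathcal C$ is bicomplete, one has $\mathrm{Ind}(D) \otimes \mathcal C \simeq {[\mathrm{Ind}(D)^\mathrm{op}, \mathcal C]}^\mathrm{c}$; because $D$ has finite colimits, $\mathrm{Ind}(D)$ is the free cocompletion of $D$ preserving finite colimits, so restriction along the Yoneda embedding identifies continuous functors $\mathrm{Ind}(D)^\mathrm{op} \to \mathcal C$ with left exact functors $D^\mathrm{op} \to \mathcal C$.

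To match the two functors, I would view $\mathrm{Ind}(D) \hookrightarrow \mathcal P(D)$ as an accessible reflective localisation at a small class $S$ of maps of $\mathcal P(D)$ whose local objects are the left exact presheaves; concretely $S$ is generated by the canonical maps $\varinjlim_j y(I(j)) \to y(\varinjlim_j I(j))$ for finite diagrams $I$ in $D$. By \cref{TensOfLoc}, $\mathrm{Ind}(D) \otimes \mathcal C$ is then the reflective localisation of $\mathcal P(D) \otimes \mathcal C \simeq [D^\mathrm{op}, \mathcal C]$ at $S \boxtimes \mathcal C$, with local objects precisely ${[D^\mathrm{op}, \mathcal C]}^\mathrm{lex}$. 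In this presheaf picture, $\mathrm{Id}_{\mathcal P(D)} \otimes \varphi$ becomes the postcomposition functor $\varphi \circ -$; because $\varphi$ is cocontinuous it sends $S \boxtimes \mathcal C$ into $S \boxtimes \mathcal E$, and because $\varphi$ is left exact it sends lex presheaves to lex presheaves. The naturality of \cref{TensOfLoc} then identifies the descended functor on localisations with $\mathrm{Id}_{\mathrm{Ind}(D)} \otimes \varphi$, yielding the claimed canonical equivalence with $\varphi \circ -$.

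For the final clause, $\mathrm{Id}_{\mathrm{Ind}(D)} \otimes \varphi$ is cocontinuous by construction of the tensor product of cocontinuous functors, and $\varphi \circ -$ is left exact because finite limits in ${[D^\mathrm{op}, \mathcal C]}^\mathrm{lex}$ are computed pointwise (a pointwise finite limit of lex functors remains lex) and $\varphi$ preserves finite limits. There is no real obstacle here; the only care required is to keep straight which reflector applies where, and invoking \cref{TensOfLoc} rather than manipulating reflectors by hand handles this cleanly.
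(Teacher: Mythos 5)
Your proof is correct and follows essentially the same route as the paper's: both identify $\mathrm{Ind}(D)\otimes\mathcal C$ with ${[D^\mathrm{op},\mathcal C]}^\mathrm{lex}$ via \cref{UberTensorProductFormula}, observe that at the presheaf level $\mathrm{Id}_{\mathcal P(D)}\otimes\varphi$ is postcomposition by $\varphi$, and descend through the reflective localisation onto left exact functors using that $\varphi$, being left exact, preserves lex presheaves. The paper phrases the descent via the commuting square from functoriality of $\otimes$ together with \cref{AdjToTensf}, while you invoke \cref{TensOfLoc} with an explicit localising class; this is only a cosmetic difference.
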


\begin{proof}
We know from
\cref{UberTensorProductFormula}
that the functors $(\mathrm{Ind}(D) \otimes -)$ and
${[D^\mathrm{op}, -]}^\mathrm{lex}$ are equivalent when applied to bicomplete
$\infty$\=/categories.

Let us denode by $L : \mathcal P(D) \longrightarrow \mathrm{Ind}(D)$ the left
adjoint to the natural inclusion of ind-objects inside presheaves. Then by
the previous corollary, the functor $L \otimes \mathcal C : \mathcal P(D)
\otimes \mathcal C \simeq {[D^\mathrm{op}, \mathcal C]} \longrightarrow
\mathrm{Ind}(D) \otimes \mathcal C
\simeq {[D^\mathrm{op}, \mathcal C]}^\mathrm{lex}$ has a right adjoint given
by the inclusion ${[D^\mathrm{op}, \mathcal C]}^\mathrm{lex} \subset
[D^\mathrm{op}, \mathcal C]$. The same is true for $\mathcal E$. By 
functoriality of the tensor product, we know that the following diagram
commutes:
\[
	\begin{tikzcd}[ampersand replacement=\&]
		\mathcal P(D) \otimes \mathcal C
		\arrow[rr, "\mathrm{Id}_{\mathcal P(D)}\thinspace \otimes\thinspace
		\varphi"]
		\arrow[d, "L\thinspace \otimes\thinspace \mathrm{Id}_{\mathcal C}", swap]
		\&\& \mathcal P(D) \otimes \mathcal E
		\arrow[d, "L\thinspace \otimes\thinspace \mathrm{Id}_{\mathcal E}"] \\
		\mathrm{Ind}(D) \otimes \mathcal C
		\arrow[rr, "\mathrm{Id}_{\mathrm{Ind}(D)}\thinspace \otimes
		\thinspace \varphi", swap]
		\&\& \mathrm{Ind}(D) \otimes \mathcal E\thinspace.
	\end{tikzcd}
\]
Under the usual identification, the top map is equivalent to the composition
functor $(\varphi \circ -) : [D^\mathrm{op}, \mathcal C] \to
[D^\mathrm{op}, \mathcal E]$. Since $\varphi$ is assumed to be left exact, we
know it sends left exact functors to left exact functors. From this we get
that the restriction of $(\varphi \circ -)$ to the subcategories of left exact
functors coincide with the tensor product map $\mathrm{Id}_{\mathrm{Ind}(D)}
\otimes \varphi$.
\end{proof}

\subsubsection{\texorpdfstring{$\mathcal C$}{C}-valued
\texorpdfstring{$\omega$}{omega}-continuous sheaves}

Going back to an exponentiable $\infty$\=/topos $\mathcal X$ and a standard
presentation:
\[
\begin{tikzcd}
\mathrm{Ind}(D) \arrow[r, "\varepsilon" description]
& \mathcal S\mathrm{h}(\mathcal X)\thinspace, \arrow[l, "\alpha", shift left=2]
\arrow[l, "\beta", shift right=2,swap]
\end{tikzcd}
\]
we let $\mathcal C$ be a bicomplete $\infty$\=/category. Tensoring the
standard presentation with $\mathcal C$ be get another triple adjunction:

\[
	\begin{tikzcd}
		\mathrm{Ind}(D) \otimes \mathcal C \arrow[r, "{\varepsilon'}"
		description] & \mathcal S\mathrm{h}(\mathcal X) \otimes \mathcal C \ .
		\arrow[l, "{\alpha'}", shift left=2]
		\arrow[l, "{\beta'}", shift right=2,swap]
	\end{tikzcd}
\]

The $\infty$\=/category $\mathrm{Ind}(D) \otimes \mathcal C$ is
canonically equivalent to ${[D^\mathrm{op}, \mathcal C]}^\mathrm{lex}$. In
the same way $\mathcal S\mathrm{h}(\mathcal X) \otimes \mathcal C$ can
be identified
with $\mathcal S\mathrm{h}(\mathcal X, \mathcal C)$. The functors $\beta'$ and
$\varepsilon'$ are given by $\beta' = \beta \otimes \mathrm{Id}_{\mathcal C}$,
$\varepsilon' = \varepsilon \otimes \mathrm{Id}_{\mathcal C}$. And we also
identify $\varepsilon'$ with $\beta^\ast$ and $\alpha'$ with
$\varepsilon^\ast$.

Exactly as in the case of $\omega$\=/continuous sheaves of spaces we
obtain an idempotent cocontinuous comonad $W'$ on
$\mathrm{Ind}(D) \otimes \mathcal C$ by letting
$W = \beta' \varepsilon'$, as well as an identification between the
$\infty$\=/category of $\mathcal C$-valued sheaves on $\mathcal X$ and the
$\infty$\=/category of fixed points of $W'$.

Note $L : \mathcal P(D) \to \mathrm{Ind}(D)$ the localisation functor adjoint
to $i : \mathrm{Ind}(D) \to \mathcal P(D)$, and $L' = L \otimes
\mathrm{Id}_{\mathcal C} : \mathcal P(D) \otimes \mathcal C \to \mathrm{Ind}(D)
\otimes \mathcal C$. Note $i'$ the right adjoint to $L'$.
By construction, in the proof of \cref{ContinuousSheaves}, the left Kan
extension $w_! : \mathcal P(D) \to \mathcal P(D)$ satisfies 
$L w_! \simeq W L$.
and as $w_{!}$ is cocontinuous, also by construction,%
\label{DefOfw'_!}
if $w'_{!}$ is defined as $w_! \otimes \mathrm{Id}_{\mathcal C}$, we get
$L' w'_!  \simeq W' L'$.
This guaranties the following inclusion:
$\mathrm{Fix}(w'_{!}) \cap i'({[D^\mathrm{op}, \mathcal C]}^\mathrm{lex})
\subset i'(\mathrm{Fix}(W'))$.

We define $\mathcal C$-valued $\omega$\=/continuous sheaves as:

\begin{definition}
Let $\mathcal X$ be an exponentiable $\infty$\=/topos together
with a standard presentation with generators $D$ and let $\mathcal C$ be a
cocomplete and finitely complete $\infty$\=/category. We define the
$\infty$\=/category ${\mathcal S\mathrm{h}}_\omega(D, \mathcal C)$ of $\mathcal
C$\=/valued $\omega$\=/continuous sheaves on $\mathcal X$ as the
$\infty$\=/category of left exact functors $\mathcal F : D^\mathrm{op}
\to \mathcal C$ such that:
\[
	\mathcal F(a) =
	\int_b w(a,b) \otimes \mathcal F(b)\thinspace ,
\]
for all $a \in D$. Where
$\otimes$ denotes the canonical tensoring of the cocomplete $\infty$\=/category
$\mathcal C$ over $\mathcal S$.
\end{definition}

We deduce the following proposition.

\begin{proposition}
Let $\mathcal X$ be an exponentiable $\infty$\=/topos, $D$ an
$\infty$\=/category of generators of a standard presentation and $\mathcal C$ a
bicomplete $\infty$\=/category, then $\varepsilon' L' :
{\mathcal S\mathrm{h}}_\omega(D, \mathcal C)
\to \mathcal S\mathrm{h}(\mathcal X, \mathcal C)$ is fully
faithful.
\end{proposition}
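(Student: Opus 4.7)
The plan is to identify $\mathcal S\mathrm{h}_\omega(D, \mathcal C)$ as a full subcategory of $\mathcal P(D) \otimes \mathcal C$ and then exploit the adjunction $L' \dashv i'$ together with the fully faithfulness of $\beta'$ to reduce the computation of mapping spaces.

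By the corollary following \cref{CoendAsColim}, the defining coend condition $\mathcal F(a) \simeq \int_b w(a,b) \otimes \mathcal F(b)$ is exactly the condition $\mathcal F \simeq w'_! \mathcal F$ inside $\mathcal P(D) \otimes \mathcal C$. Hence $\mathcal S\mathrm{h}_\omega(D, \mathcal C)$ identifies, as a full subcategory of $\mathcal P(D) \otimes \mathcal C$, with $\mathrm{Fix}(w'_!) \cap i'([D^{\mathrm{op}}, \mathcal C]^{\mathrm{lex}})$. Combined with the intertwining $L' w'_! \simeq W' L'$ recorded just before the definition of $\mathcal C$\=/valued $\omega$\=/continuous sheaves, this has the following consequence: for any $\mathcal F \in \mathcal S\mathrm{h}_\omega(D, \mathcal C)$, one has $W' L' \mathcal F \simeq L' w'_! \mathcal F \simeq L' \mathcal F$, so $L' \mathcal F$ lies in $\mathrm{Fix}(W')$. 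Since $\beta'$ is fully faithful with essential image $\mathrm{Fix}(W')$, the restriction of $\varepsilon'$ to $\mathrm{Fix}(W')$ is an equivalence onto $\mathcal S\mathrm{h}(\mathcal X, \mathcal C)$, inverse to $\beta'$.

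For $\mathcal F, \mathcal G \in \mathcal S\mathrm{h}_\omega(D, \mathcal C)$, I then chain the equivalences:
\begin{align*}
	\mathrm{Map}(\varepsilon' L' \mathcal F, \varepsilon' L' \mathcal G)
	& \simeq \mathrm{Map}(\beta' \varepsilon' L' \mathcal F, \beta' \varepsilon' L' \mathcal G) \\
	& \simeq \mathrm{Map}(W' L' \mathcal F, W' L' \mathcal G) \\
	& \simeq \mathrm{Map}(L' \mathcal F, L' \mathcal G) \\
	& \simeq \mathrm{Map}(\mathcal F, i' L' \mathcal G) \\
	& \simeq \mathrm{Map}(\mathcal F, \mathcal G),
\end{align*}
where the first equivalence uses that $\beta'$ is fully faithful, the second unfolds $W' = \beta' \varepsilon'$, the third uses that both $L' \mathcal F$ and $L' \mathcal G$ are fixed by $W'$, the fourth is the adjunction $L' \dashv i'$, and the last uses $L' i' \simeq \mathrm{Id}$ applied to $\mathcal G \in i'([D^{\mathrm{op}}, \mathcal C]^{\mathrm{lex}})$. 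As $\mathcal S\mathrm{h}_\omega(D, \mathcal C)$ is a full subcategory of $\mathcal P(D) \otimes \mathcal C$, this chain is exactly the fully faithfulness of $\varepsilon' L'$.

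There is no substantive obstacle beyond carefully tracking the various adjunctions; the whole content is packaged in the intertwining $L' w'_! \simeq W' L'$, which has already been supplied.
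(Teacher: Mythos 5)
Your argument is correct and is exactly the deduction the paper intends: the proposition is stated there without an explicit proof, as a consequence of the preceding discussion (the intertwining $L' w'_! \simeq W' L'$ and the resulting inclusion $\mathrm{Fix}(w'_!) \cap i'({[D^{\mathrm{op}},\mathcal C]}^{\mathrm{lex}}) \subset i'(\mathrm{Fix}(W'))$, combined with the fully faithfulness of $i'$ and $\beta'$). Your chain of mapping-space equivalences simply unwinds that deduction; nothing differs in substance from the paper's route.
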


The remaining key proposition is to show that $(w \otimes -)$ sends left exact
functors
to left exact functors. For this, we need to make the assumption
that $\mathcal C$ is an $\infty$\=/logos.

\begin{definition}
Let $\mathcal C$ be an $\infty$\=/logos. Let us denote by
$|\thinspace.\thinspace| : \mathcal S \longrightarrow \mathcal C$
the (essentially unique) morphism of $\infty$\=/logoses between $\mathcal S$
and $\mathcal C$.
\end{definition}

\begin{theorem}%
\label{thm:faisceaux_dans_un_logos}
Let $\mathcal X$ be an exponentiable $\infty$\=/topos together with $D$ an
$\infty$\=/category of generators of a standard presentation for
$\mathcal S\mathrm{h}(\mathcal X)$.  Let $\mathcal C$ be an $\infty$\=/logos,
then the embedding
${\mathcal S\mathrm{h}}_\omega(D, \mathcal C)
\longrightarrow \mathcal S\mathrm{h}(\mathcal X,
\mathcal C)$
is an equivalence of $\infty$\=/categories. Besides, this equivalence is
functorial along morphisms of $\infty$\=/logoses.
\end{theorem}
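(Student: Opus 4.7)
The preceding proposition gives fully faithfulness of $\varepsilon' L'$, so it remains to prove essential surjectivity together with naturality along morphisms of $\infty$\=/logoses. Given $\mathcal F \in \mathcal S\mathrm{h}(\mathcal X, \mathcal C) \simeq \mathrm{Fix}(W')$, seen as a left exact functor under the identification $\mathrm{Ind}(D) \otimes \mathcal C \simeq {[D^{\mathrm{op}}, \mathcal C]}^{\mathrm{lex}}$, the relation $L' w'_! \simeq W' L'$ combined with $L' i' \simeq \mathrm{Id}$ gives $\mathcal F \simeq W' \mathcal F \simeq L'(w'_! i' \mathcal F)$. If one shows that $w'_! i' \mathcal F$ is itself left exact, then $L'$ acts trivially on it, so $\mathcal F \simeq w'_! i' \mathcal F$ in $[D^{\mathrm{op}}, \mathcal C]$, which is exactly the coend equation defining $\mathcal S\mathrm{h}_\omega(D, \mathcal C)$.

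Everything therefore reduces to the core technical claim: for every left exact $F : D^{\mathrm{op}} \to \mathcal C$, the functor $a \mapsto \int_b w(a, b) \otimes F(b)$ is again left exact. The plan rests on three observations. First, for each $b \in D$, the functor $w(-, b) = \mathrm{Map}(-, W(b))$ is itself left exact because $W(b) \in \mathrm{Ind}(D)$; in particular $w(-, b)$ is a filtered colimit of representables. Second, by \cref{CoendAsColim} the coend $\int_b w(a, b) \otimes F(b)$ is computed as a colimit indexed by a category of elements associated to the bimodule $w$; the continuity of $\mathcal S\mathrm{h}(\mathcal X)$, re-encoded as $W$ factoring through $\mathrm{Ind}(D)$, is precisely what makes this indexing filtered over $D$. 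Third, in an $\infty$\=/logos $\mathcal C$ filtered colimits commute with finite limits. Combining these, once $a \mapsto \int_b w(a, b) \otimes F(b)$ is realized as a filtered colimit of left exact functors in the variable $a$, its value is again left exact. The main obstacle is bookkeeping: the filtered indexing depends on $b$, so some care is needed to unstraighten $W : D \to \mathrm{Ind}(D)$ into a single cocartesian fibration with filtered fibres so that the coend truly becomes a filtered colimit rather than a pointwise collection of them. The logos hypothesis on $\mathcal C$ is used exactly at this step.

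For naturality in $\mathcal C$, a morphism of $\infty$\=/logoses $\varphi : \mathcal C \to \mathcal D$ is left exact and cocontinuous, so by \cref{thm:tensoriser_avec_cclex_preserve_les_lex} the functor $\mathrm{Id}_{\mathrm{Ind}(D)} \otimes \varphi$ is canonically equivalent to post-composition $(\varphi \circ -)$ on left exact functors. Both sides of the equivalence, namely $\mathcal S\mathrm{h}_\omega(D, -)$ and $\mathcal S\mathrm{h}(\mathcal X) \otimes (-) \simeq \mathcal S\mathrm{h}(\mathcal X, -)$, are then visibly compatible with this identification, yielding the asserted naturality.
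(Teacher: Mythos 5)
Your reduction of the theorem to the core claim --- that $\mathcal F\mapsto\int_b|w(-,b)|\times\mathcal F(b)$ carries left exact functors to left exact functors --- is exactly the reduction made in the paper, and your treatment of naturality via \cref{thm:tensoriser_avec_cclex_preserve_les_lex} also matches. The gap is in the core claim itself. You propose to exhibit $a\mapsto\int_b w(a,b)\otimes F(b)$ as a filtered colimit of left exact functors by unstraightening $W:D\to\mathrm{Ind}(D)$ into a cocartesian fibration with filtered fibres, and you describe the $b$-dependence of the filtered indexing as ``bookkeeping''. It is more than that: the total space of a cocartesian fibration over $D$ with filtered fibres is not filtered unless $D$ is, and the coend over $b\in D$ is a colimit over a genuinely non-filtered index category. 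For each fixed $b$ the functor $a\mapsto w(a,b)\otimes F(b)\simeq|w(a,b)|\times F(b)$ is indeed left exact in $a$ (since $w(-,b)=\mathrm{Map}(-,Wb)$ with $Wb\in\mathrm{Ind}(D)$, and $K\otimes c\simeq|K|\times c$ in a logos), but the coend glues these along all of $D$, so left exactness of the result does not follow from the commutation of filtered colimits with finite limits alone. No rearrangement of the unstraightening of $W$ turns the coend, as a functor of $a$, into a single filtered colimit; this step, as proposed, fails.

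The paper resolves the difficulty by decomposing $F$ rather than $w$: the identification $[D^{\mathrm{op}},\mathcal C]^{\mathrm{lex}}\simeq\mathrm{Ind}(D)\otimes\mathcal C\simeq\mathrm{Ind}_{\mathcal C}(D)$ exhibits every left exact $F$ as a filtered $\mathcal C$-colimit of enriched representables $|\mathrm{Map}_D(-,d)|$, and these colimits are computed pointwise in $[D^{\mathrm{op}},\mathcal C]$. Since the coend is cocontinuous and collapses on representables by the enriched Yoneda lemma, $\int_b|w(-,b)|\times|\mathrm{Map}_D(b,d)|\simeq|w(-,d)|$, which is left exact; and left exact functors are closed under the relevant colimits precisely because filtered colimits are left exact in the logos $\mathcal C$. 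If you want to keep your route, you would instead need a genuine filteredness and cofinality statement about the categories of wavy arrows in the spirit of Johnstone and Joyal, together with a comparison of these categories across a finite limit diagram in the variable $a$; that is a substantive argument which your proposal does not supply.
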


\begin{proof}
From the discussion above, we only need to prove that the coend:
\[
	(w \underset{D}{\otimes} -)
	: \mathcal F \mapsto w \underset{D}{\otimes} \mathcal F =
	\int_{b\thinspace\in\thinspace D} |w(-,b)|\times \mathcal F(b)\thinspace,
\]
sends left exact functors $\mathcal F : D^\mathrm{op} \to \mathcal C$ to
left exact functors. The idea of the proof is exactly the same as the one of
\cref{ContinuousSheaves}, modulo a change of base $\infty$\=/logos to
$\mathcal C$. Using the cocontinuous and left exact map $|\thinspace.\thinspace|
: \mathcal S \to \mathcal C$, the $\infty$\=/category $D$ becomes enriched
over $\mathcal C$. Given an object $d \in D$, the functor
$|\mathrm{Map}_D(-,d)| : D^\mathrm{op} \longrightarrow \mathcal C$
defines an object of ${[D^\mathrm{op}, \mathcal C]}^\mathrm{lex}$ and using the
$\mathcal C$\=/enriched version of the Yoneda embedding, we get a fully
faithful functor:
\[
	D \hookrightarrow {[D^\mathrm{op}, \mathcal C]}^\mathrm{lex}
	\simeq \mathrm{Ind}(D) \otimes \mathcal C\thinspace,
\]
that describes $\mathrm{Ind}(D) \otimes \mathcal C$ as the
$\mathcal C$\=/enriched
$\infty$\=/category freely generated by $D$ under
filtered $\mathcal C$\=/colimits:
$\mathrm{Ind}(D) \otimes \mathcal C \simeq \mathrm{Ind}_{\mathcal C}(D)$.
Moreover the embedding:
\[
	\mathrm{Ind}_{\mathcal C}(D) \simeq {[D^\mathrm{op},\mathcal C]}^\mathrm{lex}
	\hookrightarrow [D^\mathrm{op}, \mathcal C]
	\simeq \mathcal P(D) \otimes \mathcal C\thinspace,
\]
commutes with filtered $\mathcal C$-colimits as they are computed pointwise.

As a consequence, we need only to check that $(w \otimes -)$ sends representable
functors to left exact ones. Let $d$ be an object of $D$, when:
\[
	\int_{b\thinspace\in\thinspace D} |w(-,b)|\times|\mathrm{Map}_D(b,d)|
	\simeq \left|\thinspace\int_{b\thinspace\in\thinspace D} w(-,b)\times
	\mathrm{Map}_D(b,d)\right| \simeq |w(-,d)|\thinspace,
\]
which is left exact as a composite of left exact functors.

The functoriality of the equivalence is a consequence of the functoriality
of the tensor product coupled with
\cref{thm:tensoriser_avec_cclex_preserve_les_lex}.
\end{proof}

\subsection{Exponentiability theorem}

In \cref{Exp=>Cont}, we have seen that the $\infty$\=/category $\mathcal
S\mathrm{h}(\mathcal X)$ is continuous for an exponentiable $\infty$\=/topos
$\mathcal X$. In the next theorem we wish to show the reciprocal statement.

\begin{theorem}%
\label{ExpTh}
An $\infty$\=/topos $\mathcal X$ is exponentiable if
and only if the $\infty$\=/category $\mathcal S\mathrm{h}(\mathcal X)$ is
continuous.
\end{theorem}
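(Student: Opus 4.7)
\Cref{Exp=>Cont} already yields the forward direction, so assume $\mathcal{S}\mathrm{h}(\mathcal X)$ is continuous and fix a standard presentation with generating $\infty$\=/category $D$ and bimodule $w$. By \cref{PartExp=>Exp} it is enough to produce a particular exponential $\mathcal Y^{\mathcal X}$ for every $\infty$\=/topos $\mathcal Y$. Particular exponentials, when they exist, satisfy $\left(\lim_i \mathcal Y_i\right)^{\mathcal X} \simeq \lim_i \mathcal Y_i^{\mathcal X}$ (both sides represent the same functor of $\mathcal Z$), and by \cref{AffGen} every $\infty$\=/topos is a limit of affine ones; hence we may assume $\mathcal Y = \mathbb A^E$ for some small $\infty$\=/category $E$.

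For any $\infty$\=/topos $\mathcal Z$, the universal property of affine $\infty$\=/toposes, \cref{thm:le_produit_tensoriel_est_le_coproduit_des_logos}, and \cref{thm:faisceaux_dans_un_logos} (whose proof only uses the existence of a standard presentation, not exponentiability) combine to give
\[
\mathrm{Map}(\mathcal Z\times\mathcal X,\mathbb A^E) \simeq \mathrm{core}\thinspace\bigl[E,\mathcal{S}\mathrm{h}(\mathcal Z)\otimes\mathcal{S}\mathrm{h}(\mathcal X)\bigr] \simeq \mathrm{core}\thinspace\bigl[E,\mathcal{S}\mathrm{h}_\omega(D,\mathcal{S}\mathrm{h}(\mathcal Z))\bigr].
\]
An element of the right-hand side is a functor $F \colon E \times D^{\mathrm{op}} \to \mathcal{S}\mathrm{h}(\mathcal Z)$ which, for each $e \in E$, is left exact in $D^{\mathrm{op}}$ and satisfies the coend condition $F(e,a) \simeq \int_b w(a,b)\otimes F(e,b)$.

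To represent this functor of $\mathcal Z$ by an $\infty$\=/logos, I would define $\mathcal{S}\mathrm{h}(\mathbb A^{E,\mathcal X})$ as the accessible left exact reflective localization of the free $\infty$\=/logos $\mathcal S[E \times D^{\mathrm{op}}]$ obtained by inverting a small set of morphisms which precisely encode the two conditions above: comparison maps associated to finite limits in $D^{\mathrm{op}}$ (parametrised by $e \in E$), and the morphisms $\int_b |w(a,b)|\otimes y(e,b) \to y(e,a)$ for each $(e,a) \in E\times D$. With this definition, the universal property $\mathrm{Map}(\mathcal Z,\mathbb A^{E,\mathcal X}) \simeq \mathrm{Map}(\mathcal Z\times\mathcal X,\mathbb A^E)$ is built in and manifestly natural in $\mathcal Z$. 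The main obstacle is verifying that the localization is left exact, so that the result is actually an $\infty$\=/logos: the first family of relations is handled by standard arguments inverting lex-comparison morphisms, while the coend relations require using that $(w\otimes -)$ restricts to a left exact cocontinuous idempotent comonad --- precisely the identification driving the proof of \cref{ContinuousSheaves} --- together with \cref{thm:tensoriser_avec_cclex_preserve_les_lex} to propagate left exactness through the bimodule.
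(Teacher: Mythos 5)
Your reduction to affine targets and your identification of the functor to be represented, $\mathcal Z\mapsto\mathrm{core}\thinspace[E,{\mathcal S\mathrm{h}}_\omega(D,\mathcal S\mathrm{h}(\mathcal Z))]$, match the paper's strategy (this is \cref{ExpIffExpOnAff} together with the first half of \cref{ExpA1}), and you are right that \cref{thm:faisceaux_dans_un_logos} only needs continuity. The gap is in the final step. You define the representing object as a reflective localisation of $\mathcal S[E\times D^{\mathrm{op}}]$ and correctly flag that its left exactness is the main obstacle, but the argument you sketch for it does not work. The comonad appearing in \cref{ContinuousSheaves} is $\mathcal F\mapsto\int_b w(-,b)\times\mathcal F(b)$ on $\mathcal P(D)$; it is \emph{not} left exact as an endofunctor of $\mathcal P(D)$ --- that proposition only shows it preserves the subcategory $\mathrm{Ind}(D)$, which is not an $\infty$-logos --- and \cref{thm:tensoriser_avec_cclex_preserve_les_lex} concerns tensoring $\mathrm{Ind}(D)$ with a left exact cocontinuous functor, which says nothing about left exactness of a localisation of a free $\infty$-logos. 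Left exactness of a reflective localisation at an arbitrary small set of maps fails in general, and establishing it here is the entire difficulty of the theorem; your proposal leaves it unproved.

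The paper sidesteps this by never presenting the exponential as a localisation of a free $\infty$-logos. In \cref{ExpA1} it works with the \emph{other} variance: since $w(-,b)=\mathrm{Map}(-,\beta\varepsilon\thinspace b)$ and $\beta\varepsilon\thinspace b$ is an ind-object, the bimodule $w$ is left exact in its \emph{first} variable, so $G\mapsto\int_c G(c)\times w(c,-)$ is a cocontinuous \emph{and left exact} idempotent comonad on $\mathcal P(D^{\mathrm{op}})$. Its fixed points $\mathcal P$ are therefore a retract of $\mathcal P(D^{\mathrm{op}})$ by left exact cocontinuous functors, hence automatically an $\infty$-logos (indeed an injective one), with ${[\mathcal P,\mathcal L]}^{\mathrm{lex}}_{\mathrm{cc}}\simeq{\mathcal S\mathrm{h}}_\omega(D,\mathcal L)$ falling out of the construction. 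The left exactness of $w$ in the first variable is the decisive input and is absent from your proposal. The remaining difference --- you treat a general exponent $E$ directly, whereas the paper reduces to $E=\ast$ via \cref{ExpOnAff}, using that $\llbracket E^{\mathrm{op}}\rrbracket$ is exponentiable and ${(\mathbb A^{E})}^{\mathcal X}\simeq{({\mathbb A}^{\mathcal X})}^{\llbracket E^{\mathrm{op}}\rrbracket}$ --- is legitimate in principle but only enlarges the construction whose key verification is missing.
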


The proof of this theorem will follow naturally from the lemmas below.

\begin{lemma}%
\label{ExpIffExpOnAff}
An $\infty$\=/topos $\mathcal X$ is
exponentiable if and only if the particular exponentials
${\left(\mathbb A^D\right)}^\mathcal
X$ exists for every $\mathbb A^D \in \mathcal A\mathrm{ff}$.
\end{lemma}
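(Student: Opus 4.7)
First, the forward implication is immediate from the definition: if $\mathcal X$ is exponentiable, then the right adjoint $\mathcal Y \mapsto \mathcal Y^{\mathcal X}$ exists globally, so in particular every $(\mathbb A^D)^{\mathcal X}$ exists. Conversely, assume that $(\mathbb A^D)^{\mathcal X}$ exists for every $\mathbb A^D \in \mathcal A\mathrm{ff}$. The plan is to show that the full subcategory $\mathcal T_{\mathcal X} \subset \mathcal T\mathrm{op}$ spanned by those $\mathcal Y$ for which the particular exponential $\mathcal Y^{\mathcal X}$ exists contains every affine by hypothesis and is stable under pullbacks in $\mathcal T\mathrm{op}$. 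Since \cref{AffGen} shows that $\mathcal T\mathrm{op}$ is generated under pullbacks by affine $\infty$\=/toposes, this forces $\mathcal T_{\mathcal X} = \mathcal T\mathrm{op}$, and then \cref{PartExp=>Exp} concludes.

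The key step is therefore closure of $\mathcal T_{\mathcal X}$ under pullbacks. Given a cospan $\mathcal Y_1 \to \mathcal Y_0 \leftarrow \mathcal Y_2$ with $\mathcal Y_0, \mathcal Y_1, \mathcal Y_2 \in \mathcal T_{\mathcal X}$, I would first produce canonical arrows $\mathcal Y_i^{\mathcal X} \to \mathcal Y_0^{\mathcal X}$ (for $i = 1,2$) using the universal property of $\mathcal Y_0^{\mathcal X}$ applied to the composition $\mathcal X \times \mathcal Y_i^{\mathcal X} \to \mathcal Y_i \to \mathcal Y_0$. I would then form the pullback $P := \mathcal Y_1^{\mathcal X} \times_{\mathcal Y_0^{\mathcal X}} \mathcal Y_2^{\mathcal X}$ in $\mathcal T\mathrm{op}$; this exists because $\mathcal L\mathrm{og}$ has pushouts (via the tensor product of \cref{thm:le_produit_tensoriel_est_le_coproduit_des_logos} together with the localisation description used in the proof of \cref{AffGen}). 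The claim is that $P$ represents the particular exponential of $\mathcal Y_1 \times_{\mathcal Y_0} \mathcal Y_2$, which follows from chaining the universal properties together with the fact that $\mathrm{Map}(-,-)$ preserves pullbacks in its second argument:
\begin{align*}
\mathrm{Map}(\mathcal Z \times \mathcal X, \mathcal Y_1 \times_{\mathcal Y_0} \mathcal Y_2)
&\simeq \mathrm{Map}(\mathcal Z \times \mathcal X, \mathcal Y_1) \times_{\mathrm{Map}(\mathcal Z \times \mathcal X, \mathcal Y_0)} \mathrm{Map}(\mathcal Z \times \mathcal X, \mathcal Y_2) \\
&\simeq \mathrm{Map}(\mathcal Z, \mathcal Y_1^{\mathcal X}) \times_{\mathrm{Map}(\mathcal Z, \mathcal Y_0^{\mathcal X})} \mathrm{Map}(\mathcal Z, \mathcal Y_2^{\mathcal X}) \\
&\simeq \mathrm{Map}(\mathcal Z, P),
\end{align*}
naturally in $\mathcal Z \in \mathcal T\mathrm{op}$.

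The argument is essentially formal, and the only real obstacle is making sure that the pullbacks used above genuinely exist in $\mathcal T\mathrm{op}$ and that the canonical maps between exponentials assemble compatibly with the mapping space identifications. Neither is serious: the first is a consequence of pushouts in $\mathcal L\mathrm{og}$ already exhibited in the paper, and the second is a routine Yoneda-type check. All the nontrivial content is absorbed into \cref{AffGen}, which reduces the problem of exponentiability to the affine case.
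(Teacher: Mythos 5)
Your proof is correct and follows essentially the same route as the paper: both reduce to \cref{PartExp=>Exp}, invoke \cref{AffGen} to present an arbitrary $\mathcal Y$ as a limit (pullback) of affines, and then verify via the mapping-space computation that the corresponding limit of the particular exponentials $\left(\mathbb A^{D_i}\right)^{\mathcal X}$ serves as $\mathcal Y^{\mathcal X}$. The paper phrases this as taking a single limit of exponentials rather than as closure of the subcategory $\mathcal T_{\mathcal X}$ under pullbacks, but the content is identical.
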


\begin{proof}
By \cref{PartExp=>Exp}, we only need to show that the
particular exponentials $\mathcal Y^\mathcal X$ exist for every $\mathcal Y \in
\mathcal T\mathrm{op}$. But by \cref{AffGen}, any $\mathcal Y \in
\mathcal T\mathrm{op}$ is a limit of affine $\infty$-toposes i.e $\mathcal Y
\simeq \underset{i \in I}{\varprojlim}\thinspace \mathbb A^{D_i}$.
As every exponential
${\left(\mathbb A^{D_i}\right)}^\mathcal X$ exists, we get a map:
\[
	\mathcal X \times \underset{i \in
I}{\varprojlim}\thinspace {\left(\mathbb A^{D_i}\right)}^\mathcal X \to
\underset{i \in I}{\varprojlim}\thinspace \mathbb A^{D_i}\thinspace,
\]
that exhibits
$\underset{i \in I}{\varprojlim}\thinspace
{\left(\mathbb A^{D_i}\right)}^\mathcal X$ as the
exponential $\mathcal Y^\mathcal X$.
\end{proof}

\begin{lemma}%
\label{ExpOnAff}
Let $\mathcal X$ be an $\infty$\=/topos for
which the exponential $\mathbb A^\mathcal X$ exists, then all exponentials
${\left(\mathbb A^D\right)}^\mathcal X$ exist for every affine $\infty$-topos
$\mathbb A^D$.
\end{lemma}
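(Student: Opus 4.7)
The plan is to construct $(\mathbb A^D)^\mathcal X$ as the splitting of an idempotent on an affine $\infty$\=/topos, exploiting the injectivity of $\mathbb A^\mathcal X$.

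First, iterating the universal property of free $\infty$\=/logoses yields, for any pair of small $\infty$\=/categories $D, D'$, an equivalence $(\mathbb A^{D'})^D \simeq \mathbb A^{D' \times D}$ in $\mathcal T\mathrm{op}$: the particular exponential by $D$ therefore exists for every affine $\infty$\=/topos, is again affine, and the assignment is functorial on morphisms of affines. Indeed, for any $\mathcal Z$ one has $\mathrm{Map}(\mathcal Z, \mathbb A^{D' \times D}) \simeq \mathrm{core}([D' \times D, \mathcal S\mathrm{h}(\mathcal Z)]) \simeq \mathrm{core}([D, [D', \mathcal S\mathrm{h}(\mathcal Z)]])$, and the right-hand side corepresents the cotensor.

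Next, by the argument appearing inside the proof of \cref{Exp=>Cont}, the hypothesis that $\mathbb A^\mathcal X$ exists already forces $\mathbb A^\mathcal X$ to be injective, since $\mathbb A$ is injective and $(-) \times \mathcal X$ preserves inclusions. By \cref{thm:les_injectifs_sont_les_retracts_d_affines} we may therefore present $\mathbb A^\mathcal X$ as a retract of some affine $\mathbb A^{D'}$, and denote by $e : \mathbb A^{D'} \to \mathbb A^{D'}$ the associated idempotent splitting through $\mathbb A^\mathcal X$. Functoriality of exponentiation by $D$ on affines promotes $e$ to an idempotent $e^D : \mathbb A^{D' \times D} \to \mathbb A^{D' \times D}$; since $\mathcal T\mathrm{op}$ admits all small (co)limits it is idempotent\=/complete, so we may form the splitting $\mathcal W$ of $e^D$ in $\mathcal T\mathrm{op}$.

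The final step is to check that $\mathcal W$ satisfies the universal property of $(\mathbb A^D)^\mathcal X$. Applying the representable functor $\mathrm{Map}(\mathcal Z, -)$, which preserves limits and therefore the splittings of idempotents, one obtains $\mathrm{Map}(\mathcal Z, \mathcal W)$ as the splitting of $\mathrm{Map}(\mathcal Z, e^D)$ acting on $\mathrm{core}([D, [D', \mathcal S\mathrm{h}(\mathcal Z)]])$ by post\=/composition in the $D'$\=/variable with the endofunctor induced by $e^\ast$ on $[D', \mathcal S\mathrm{h}(\mathcal Z)]$. I expect the main delicacy to be identifying this splitting with $\mathrm{core}([D, \mathcal S\mathrm{h}(\mathcal Z \times \mathcal X)]) \simeq \mathrm{Map}(\mathcal Z \times \mathcal X, \mathbb A^D)$, which amounts to promoting the space\=/level equivalence $\mathrm{core}(\mathcal S\mathrm{h}(\mathcal Z \times \mathcal X)) \simeq \mathrm{Map}(\mathcal Z, \mathbb A^\mathcal X)$ through the functor $\mathrm{core}([D, -])$; this compatibility is ensured by the retraction between $\mathcal S\mathrm{h}(\mathbb A^\mathcal X)$ and $\mathcal S[D']$ taking place at the level of $\infty$\=/logoses rather than merely at the level of mapping spaces.
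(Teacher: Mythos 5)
Your overall strategy is close in spirit to the paper's: both proofs reduce $(\mathbb A^D)^{\mathcal X}$ to a ``cotensor by $D$'' applied to $\mathbb A^{\mathcal X}$, using the computation $\mathrm{Map}(\mathcal Z, \mathbb A^{D'\times D}) \simeq \mathrm{core}([D,[D',\mathcal S\mathrm{h}(\mathcal Z)]])$ on affines. But your detour through injectivity and idempotent splitting creates a genuine gap at exactly the step you flag, and the justification you offer does not close it. Concretely: the splitting of the idempotent $E$ induced by $e^\ast$ on $[D', \mathcal S\mathrm{h}(\mathcal Z)] \simeq {[\mathcal S[D'], \mathcal S\mathrm{h}(\mathcal Z)]}^{\mathrm{lex}}_{\mathrm{cc}}$ is the $\infty$\=/category ${[\mathcal S\mathrm{h}(\mathbb A^{\mathcal X}), \mathcal S\mathrm{h}(\mathcal Z)]}^{\mathrm{lex}}_{\mathrm{cc}}$ --- this much does follow from the retraction living in $\mathcal L\mathrm{og}$. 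What you then need, before applying $[D,-]$ and $\mathrm{core}$, is an equivalence of $\infty$\=/categories ${[\mathcal S\mathrm{h}(\mathbb A^{\mathcal X}), \mathcal S\mathrm{h}(\mathcal Z)]}^{\mathrm{lex}}_{\mathrm{cc}} \simeq \mathcal S\mathrm{h}(\mathcal Z\times\mathcal X)$, natural in $\mathcal Z$. The defining property of $\mathbb A^{\mathcal X}$ only supplies an equivalence of the \emph{cores} of these two $\infty$\=/categories, and since $\mathrm{core}([D,-])$ does not factor through $\mathrm{core}(-)$, the space\=/level statement is not enough. The retraction does not help here either: it exhibits ${[\mathcal S\mathrm{h}(\mathbb A^{\mathcal X}),\mathcal S\mathrm{h}(\mathcal Z)]}^{\mathrm{lex}}_{\mathrm{cc}}$ as a retract of $\mathcal S\mathrm{h}(\mathcal Z)\otimes\mathcal P(D'^{\mathrm{op}})$, but $\mathcal S\mathrm{h}(\mathcal Z\times\mathcal X)$ is not presented as a compatible retract of that $\infty$\=/category ($\mathbb A^{\mathcal X}$ is a retract of $\mathbb A^{D'}$, not $\mathcal X$ of $\llbracket D'^{\mathrm{op}}\rrbracket$). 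The gap is repairable --- e.g.\ by testing against $\mathcal Z\times\llbracket (\Delta^n)^{\mathrm{op}}\rrbracket$ for all $n$ and invoking completeness of the resulting Segal spaces --- but that argument is absent. A secondary point: to split $e^D$ you need $(-)^D$ to be a coherent functor on affines (an idempotent in an $\infty$\=/category is more data than a self\=/map squaring to itself), which your corepresentability sketch does not quite deliver.

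The paper avoids all of this by working one level up: it first shows that the $\infty$\=/topos $\llbracket D^{\mathrm{op}}\rrbracket$ with $\mathcal S\mathrm{h}(\llbracket D^{\mathrm{op}}\rrbracket) = \mathcal P(D^{\mathrm{op}})$ is \emph{exponentiable} (via coexponentiability of $\mathcal P(D^{\mathrm{op}})$ in $\mathcal L\mathrm{og}$, with ${(\mathbb A^{C})}^{\llbracket D^{\mathrm{op}}\rrbracket}\simeq\mathbb A^{C\times D}$), and then simply sets ${(\mathbb A^D)}^{\mathcal X} = {({\mathbb A}^{\mathcal X})}^{\llbracket D^{\mathrm{op}}\rrbracket}$. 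The verification is then a chain of three equivalences of mapping \emph{spaces},
\[
\left[\mathcal Y, {\left(\mathbb A^{\mathcal X}\right)}^{\llbracket D^{\mathrm{op}}\rrbracket}\right]
\simeq \left[\mathcal Y\times\llbracket D^{\mathrm{op}}\rrbracket\times\mathcal X, \mathbb A\right]
\simeq \left[\mathcal Y\times\mathcal X, \mathbb A^{D}\right],
\]
in which the universal property of $\mathbb A^{\mathcal X}$ is only ever used at the space level, with test object $\mathcal Y\times\llbracket D^{\mathrm{op}}\rrbracket$. No injectivity of $\mathbb A^{\mathcal X}$, no idempotents, and no categorification of the universal property are needed. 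I would recommend restructuring your argument along these lines, or else supplying the missing upgrade of the equivalence of cores to an equivalence of $\infty$\=/categories.
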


\begin{proof}
The first part of the proof consists in showing that the
$\infty$\=/topos $\llbracket D \rrbracket$ defined by $\mathcal P(D) = \mathcal
S\mathrm{h}(\llbracket D \rrbracket)$ is exponentiable.

For this, we will show that $\mathcal P(D)$ is coexponentiable in $\mathcal
L\mathrm{og}$. The map $\mathcal S[C] \to \mathcal S[C \times D^\mathrm{op}]$
gives the unit map $\mathcal S[C] \to \mathcal S[C \times D^\mathrm{op}]
\otimes \mathcal P(D)$.  For every $\mathcal L \in
\mathcal L\mathrm{og}$, we then have a map:
\[
\mathrm{Map}_{\mathcal
L\mathrm{og}}(\mathcal S[C \times D^\mathrm{op}], \mathcal L)
\to \mathrm{Map}_{\mathcal L\mathrm{og}}(\mathcal S[C], \mathcal
L \otimes \mathcal P(D))\thinspace,
\]
which is an isomorphism in
$\widehat{\mathcal H}$. Hence by \cref{ExpIffExpOnAff},
$\llbracket D \rrbracket$ is
exponentiable and by the calculation we have just done
${\left(\mathbb A^C\right)}^{\llbracket D \rrbracket}
\simeq \mathbb A^{C\thinspace \times
\thinspace D^\mathrm{op}}$.

We shall end the proof by noticing that the particular
exponential ${\left(\mathbb A^D\right)}^\mathcal X$ can be defined as
${\left({\mathbb A}^\mathcal X\right)}^{\llbracket D^\mathrm{op}\rrbracket}$
for any small $\infty$\=/category $D$.
The evaluation map $\mathcal X \times \mathbb A^\mathcal X \to \mathbb A$ gives:
\[
	\mathcal X^{\llbracket D^\mathrm{op} \rrbracket}
	\times {\left(\mathbb A^\mathcal X\right)}^{\llbracket D^\mathrm{op}
	\rrbracket} \longrightarrow \mathbb A^D\thinspace.
\]
Using the map $\mathcal X \to
\mathcal X^{\llbracket D^\mathrm{op} \rrbracket}$ (from exponential of
the first projection $\mathcal X \times \llbracket D^\mathrm{op} \rrbracket
\to \mathcal
X$), we end up with the evaluation map:
\[
	\mathcal X \times {\left(\mathbb
	A^\mathcal X\right)}^{\llbracket D^\mathrm{op} \rrbracket} \to \mathbb A^D
	\thinspace.
\]
Finally
for every $\infty$\=/topos $\mathcal Y$, we get the following equivalences of
mapping spaces in $\mathcal{T}\mathrm{op}$:
\[
	\left[\mathcal Y, {\left(\mathbb A^\mathcal
	X\right)}^{\llbracket D^\mathrm{op} \rrbracket}\right]
	\simeq \left[\mathcal Y \times \llbracket D^\mathrm{op} \rrbracket
	\times \mathcal X, \mathbb A\right] \simeq
	\left[\mathcal Y \times \mathcal X , \mathbb A^D\right] \thinspace.
\]
Using \cref{ExpIffExpOnAff} again, $\mathcal X$ is
exponentiable.
\end{proof}

\begin{lemma}%
\label{ExpA1}
Let $\mathcal X$ be an $\infty$\=/topos such that
$\mathcal S\mathrm{h}(\mathcal X)$ is a continuous $\infty$\=/category, then the
exponential $\mathbb A^{\mathcal X}$ exists in $\mathcal T\mathrm{op}$.
\end{lemma}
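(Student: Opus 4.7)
The plan is to construct the $\infty$-topos $\mathbb{A}^{\mathcal{X}}$ by directly exhibiting its $\infty$-logos as a specific localisation of a free $\infty$-logos, and then checking its universal property. By \cref{thm:le_produit_tensoriel_est_le_coproduit_des_logos} combined with the universal property of $\mathcal{S}[X]$,
\[
	\mathrm{Map}_{\mathcal{T}\mathrm{op}}(\mathcal{Y} \times \mathcal{X}, \mathbb{A}) \simeq \mathrm{core}(\mathcal{S}\mathrm{h}(\mathcal{Y}) \otimes \mathcal{S}\mathrm{h}(\mathcal{X})),
\]
so the existence of $\mathbb{A}^{\mathcal{X}}$ is equivalent to the representability in $\mathcal{L}\mathrm{og}$ of the functor $\Phi : \mathcal{L} \mapsto \mathrm{core}(\mathcal{L} \otimes \mathcal{S}\mathrm{h}(\mathcal{X}))$.

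Fix a standard presentation of $\mathcal{S}\mathrm{h}(\mathcal{X})$ with small generating subcategory $D \subset \mathcal{S}\mathrm{h}(\mathcal{X})$. By \cref{thm:faisceaux_dans_un_logos}, $\Phi(\mathcal{L}) \simeq \mathrm{core}(\mathcal{S}\mathrm{h}_\omega(D, \mathcal{L}))$, the core of the $\infty$-category of left exact functors $F : D^{\mathrm{op}} \to \mathcal{L}$ satisfying the coend condition $F(a) \simeq \int_b w(a,b) \otimes F(b)$. I would construct the representing $\infty$-logos $\mathcal{M}$ as the composition of two accessible reflective localisations of the free $\infty$-logos $\mathcal{S}[D^{\mathrm{op}}] = \mathcal{P}(\overline{D^{\mathrm{op}}})$. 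First, invert the canonical comparison arrows between formal finite limits in $\overline{D^{\mathrm{op}}}$ and the actual finite limits already present in $D^{\mathrm{op}}$ (which exist since $D$ is closed under finite colimits); the resulting logos $\mathcal{M}_1$ represents $\mathcal{L} \mapsto \mathrm{core}([D^{\mathrm{op}}, \mathcal{L}]^{\mathrm{lex}}) \simeq \mathrm{core}(\mathrm{Ind}(D) \otimes \mathcal{L})$ by the universal property of free logoses together with \cref{UberTensorProductFormula}. Second, invert the small family of counit maps $\int_b w(a,b) \otimes u(b) \to u(a)$ for $a \in D$, formed inside $\mathcal{M}_1$ from the universal left exact functor $u : D^{\mathrm{op}} \to \mathcal{M}_1$, so as to enforce the coend condition.

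The main obstacle is verifying that the second localisation is itself accessible and left exact, so that $\mathcal{M}$ is genuinely an $\infty$-logos. Accessibility is immediate from smallness of $D$. For left exactness, the cleanest route is to apply \cref{thm:faisceaux_dans_un_logos} together with \cref{thm:tensoriser_avec_cclex_preserve_les_lex} functorially in $\mathcal{L}$: the inclusion $\mathcal{S}\mathrm{h}_\omega(D, \mathcal{L}) \hookrightarrow \mathrm{Ind}(D) \otimes \mathcal{L}$ identifies with the tensor product of the standard presentation's left exact reflective localisation $\mathrm{Ind}(D) \to \mathcal{S}\mathrm{h}(\mathcal{X})$ against $\mathrm{Id}_{\mathcal{L}}$. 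Specialising to the universal case $\mathcal{L} = \mathcal{M}_1$, the reflector $\mathcal{M}_1 \to \mathcal{M}$ inherits left exactness, making $\mathcal{M}$ an $\infty$-logos. By construction $\mathrm{Map}_{\mathcal{L}\mathrm{og}}(\mathcal{M}, \mathcal{L}) \simeq \Phi(\mathcal{L})$, so $\mathbb{A}^{\mathcal{X}}$ exists as the $\infty$-topos with $\infty$-logos $\mathcal{M}$.
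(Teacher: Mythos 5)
Your reduction to representability in $\mathcal L\mathrm{og}$ of $\mathcal L \mapsto \mathrm{core}(\mathcal L \otimes \mathcal S\mathrm{h}(\mathcal X))$, and your first localisation producing $\mathcal M_1 \simeq \mathcal P(D^{\mathrm{op}})$ classifying left exact functors out of $D^{\mathrm{op}}$, are both sound and match the skeleton of the paper's argument (the paper simply starts from $\mathcal P(D^{\mathrm{op}})$ directly). The gap is in the second step, which you correctly identify as the main obstacle but do not actually close. The subcategory cut out by the coend condition is the category of fixed points of the idempotent \emph{comonad} $(-\otimes w)$, i.e.\ it is in the first instance \emph{coreflective}, and the reason the associated reflector is left exact is not formal: it rests entirely on the fact that $w$ is left exact in its first variable (because $W=\beta\varepsilon$ takes values in ind-objects --- this is precisely where continuity of $\mathcal S\mathrm{h}(\mathcal X)$ enters). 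That fact makes $(-\otimes w)$ a left exact cocontinuous idempotent comonad, hence gives the fixed-point category $\mathcal P$ a fully faithful \emph{left exact left adjoint} $\gamma$ to the projection $\rho$; since $\rho$ also has a fully faithful right adjoint, $\rho$ is an accessible reflective localisation that preserves all limits, and $\mathcal P$ is a logos. Your write-up never invokes the left exactness of $w(-,b)$, and for a generic bimodule the localisation at the counit maps $\int_b w(a,b)\otimes u(b) \to u(a)$ would simply fail to be left exact; so some essential input is missing from your argument.

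The specific justification you offer does not supply that input. \Cref{thm:tensoriser_avec_cclex_preserve_les_lex} concerns $\mathrm{Id}_{\mathrm{Ind}(D)}\otimes\varphi$ for $\varphi$ left exact cocontinuous, i.e.\ tensoring in the \emph{other} variable; it says nothing about $\varepsilon\otimes\mathrm{Id}_{\mathcal L}$ being left exact (and one cannot get this from \cref{thm:le_produit_tensoriel_est_le_coproduit_des_logos} either, since $\mathrm{Ind}(D)$ is not in general an $\infty$-logos). Moreover ``specialising to $\mathcal L=\mathcal M_1$'' does not connect the inclusion $\mathcal S\mathrm{h}_\omega(D,\mathcal M_1)\hookrightarrow \mathrm{Ind}(D)\otimes\mathcal M_1$ --- a functor between large functor categories --- with the reflector $\mathcal M_1\to\mathcal M$ defined on $\mathcal P(D^{\mathrm{op}})$ itself; these live in different places and no Yoneda-type argument is given to transfer left exactness from one to the other. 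To repair the proof you should, as the paper does, observe that $w(-,b)$ is a filtered colimit of corepresentables and hence left exact, deduce that $(-\otimes w)$ is a left exact cocontinuous idempotent comonad on $\mathcal P(D^{\mathrm{op}})$, and take $\mathcal M$ to be its category of fixed points; your identification of $[\mathcal M,\mathcal L]^{\mathrm{lex}}_{\mathrm{cc}}$ with $\mathcal S\mathrm{h}_\omega(D,\mathcal L)$ and the appeal to \cref{thm:faisceaux_dans_un_logos} then go through as you intend (noting, as the paper does, that that theorem only needs continuity rather than exponentiability, so there is no circularity).
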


\begin{proof}
Let $\mathcal X$ be an $\infty$\=/topos such that $\mathcal S\mathrm{h}(\mathcal
X)$ is continuous. To show that $\mathbb A^\mathcal X$ exists, we have to find
an injective $\infty$\=/topos $\mathcal I$ and functorial isomorphisms
$\mathrm{Map}_{\mathcal L\mathrm{og}}(\mathcal S\mathrm{h}(\mathcal I),
\mathcal L) \to
\mathrm{Map}_{\mathcal L\mathrm{og}}(\mathcal S[X],
\mathcal L \otimes
\mathcal S\mathrm{h}(\mathcal X))$
in $\widehat{\mathcal H}$.

First we build $\mathcal I$. For this take a standard presentation of $\mathcal
S\mathrm{h}(\mathcal X)$:
\[
	\begin{tikzcd}
		\mathrm{Ind}(D) \arrow[r, "\varepsilon" description] &
		\mathcal S\mathrm{h}(\mathcal X) \arrow[l, "\alpha", shift left=2]
		\arrow[l, "\beta", shift right=2,swap] \thinspace.
	\end{tikzcd}
\]
Let $W = \beta \varepsilon$.
Now because $\beta$ and $\varepsilon$ are adjoint and that $\beta$ is fully
faithful, we have that $W$ is an idempotent cocontinuous comonad on
$\mathrm{Ind}(D)$ and $\beta$ induces an equivalence between $\mathcal
S\mathrm{h}(\mathcal X)$ and the fixed points of $W$.

Let $w : D^\mathrm{op} \times D \to \mathcal S$ be the corresponding bimodule.
Notice that because $W$ has its values in ind-objects, the bimodule $w$ is left
exact in the first variable. Moreover the idempotent comonad structure of $W$
can be rewritten in the following way: the bimodule $w$ bears a bimodule map
$w \Rightarrow \mathrm{Map}_{D}$ inducing the following formula:
\[
	\int_\mathrm{c} w(a,c) \times w(c,b) \simeq w(a,b)\ .
\]

Let us denote by $(- \underset{D^\mathrm{op}}{\otimes} w)
: \mathcal P(D^\mathrm{op})
\to \mathcal P(D^\mathrm{op})$ the functor defined by:
\[
	G \underset{D^\mathrm{op}}{\otimes} w = \int_c G(c) \times w(c,-) \ .
\]
Since $w$ is left exact in the first variable, the endofunctor $(- \otimes w)$
(obtained by left extension from $w$) is cocontinuous and left exact;
it also bears the structure of an idempotent
comonad. We shall call $\mathcal P$ its $\infty$\=/category of fixed points.
We end up with the following presentation:
\[
	\begin{tikzcd}
		\mathcal P(D^\mathrm{op})
		\arrow[r, "\rho" description] &
		\mathcal P
		\arrow[l, "\kappa", shift left=2]
		\arrow[l, "\gamma", shift right=2,swap] \thinspace,
	\end{tikzcd}
\]
where $\gamma \rho \simeq (- \otimes w)$, both $\gamma$ and $\kappa$ are fully
faithful and $\gamma$ is left exact. From this presentation we deduce
immediately
that $\mathcal P$ is an $\infty$\=/logos and as it is a retract in the
$\infty$\=/category of $\infty$\=/logoses of $\mathcal P(D^\mathrm{op})$, its
associated $\infty$\=/topos is injective. This is our $\mathcal I$.

Let $\mathcal L$ be any $\infty$\=/logos. We will show that
${[\mathcal P, \mathcal L]}^\mathrm{lex}_\mathrm{cc}$ and
$\mathcal S\mathrm{h}(\mathcal X) \otimes \mathcal L$
are equivalent by contemplating their respective descriptions.

The $\infty$\=/category ${[\mathcal P, \mathcal L]}^\mathrm{lex}_\mathrm{cc}$
is equivalent,
by definition of $\mathcal P$, to
the $\infty$\=/category of cocontinuous and left exact functors $\mathcal F :
\mathcal P(D^\mathrm{op}) \to \mathcal L$ such that:
\[
	\mathcal F \circ (- \underset{D^\mathrm{op}}{\otimes} w) \simeq \mathcal F \ .
\]
But since $\mathcal F$ is cocontinuous and left exact, this $\infty$\=/category
is also equivalent to the $\infty$\=/category of left exact functors
$\mathcal F : D^\mathrm{op} \to \mathcal L$ such that:
\[
	w \underset{D}{\otimes} \mathcal F \simeq \mathcal F \ .
\]
In other words,  ${[\mathcal P,\mathcal L]}^\mathrm{lex}_\mathrm{cc}$ is
equivalent to ${\mathcal S\mathrm{h}}_\omega(D, \mathcal L)$.
Moreover this equivalence is functorial in $\mathcal L$.

Using
\cref{thm:faisceaux_dans_un_logos}
(one only needs a continuous $\infty$\=/category
to use the conclusions of the theorem), we are also given functorial
equivalences of $\infty$\=/categories between
$\mathcal S\mathrm{h}_\omega(D,\mathcal L)$
and $\mathcal S\mathrm{h}(\mathcal X) \otimes \mathcal L$, so that we obtain
equivalences in $\widehat{\mathcal H}$:
$\mathrm{Map}_{\mathcal{T}\mathrm{op}}(\mathcal Y, \mathcal I)
\simeq \mathrm{Map}_{\mathcal{T}\mathrm{op}}(\mathcal Y \times \mathcal X,
\mathbb A)$
which are functorial in $\mathcal Y$. This proves the existence of
$\mathbb A^{\mathcal X}$.
\end{proof}

\subsection{Glossary of maps between
\texorpdfstring{$\infty$\=/toposes}{infinity-toposes}}

Given a morphism of $\infty$\=/toposes 
$f: \mathcal X \to \mathcal Y$,
we shall say that:
\begin{itemize}
	\item the $\infty$\=/topos $\mathcal X$ has trivial $\mathcal
	      Y$-shape if $f^\ast$ is fully faithful;
	\item the morphism $f$ is essential if $f^\ast$ has a left adjoint;
	\item the morphism $f$ is proper if it satisfies the stable
	      Beck-Chevalley condition%
	      ~\cite[Definition 7.3.1.4]{doi:10.1515/9781400830558};
	\item the morphism $f$ is cell-like if $f$ is proper
	and $\mathcal X$ has trivial $\mathcal Y$-shape;
	\item the morphism $f$ is étale if there exists $U \in \mathcal
	S\mathrm{h}(\mathcal Y)$ such that $f^\ast : \mathcal S\mathrm{h}(\mathcal Y)
	\to {\mathcal S\mathrm{h}(\mathcal Y)}_{/U}$ is the product by $U$;
	\item the $\infty$\=/topos $\mathcal X$ is an open (resp.\ closed, resp.\
	      locally closed) subtopos
	      of $\mathcal Y$ if $f$ is an étale inclusion (resp.\ proper inclusion,
	      resp.\ the intersection of an étale inclusion and a proper inclusion).
\end{itemize}

\subsection{Examples of exponentiable
\texorpdfstring{$\infty$\=/toposes}{infinity-toposes}}

\begin{proposition}%
\label{thm:omega_presentable_implique_exponentiable}
Let $\mathcal X$ be an $\infty$\=/topos
and suppose that the $\infty$\=/category $\mathcal S\mathrm{h}(\mathcal X)$
is $\omega$-presentable.
Then $\mathcal X$ is exponentiable.
\end{proposition}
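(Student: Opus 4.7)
The plan is to reduce directly to the two main results already established in the excerpt, namely \cref{ExpTh} and \cref{Ind=Cont}. First, by \cref{ExpTh}, an $\infty$\=/topos $\mathcal{X}$ is exponentiable if and only if $\mathcal{S}\mathrm{h}(\mathcal{X})$ is continuous, so the whole task reduces to checking continuity of the $\infty$\=/logos.

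Next, recall that an $\omega$\=/presentable $\infty$\=/category is, by definition, equivalent to $\mathrm{Ind}(\mathcal{D})$ where $\mathcal{D}$ is the small $\infty$\=/category of $\omega$\=/compact objects (closed under finite colimits). Thus, if we set $\mathcal{D} = \mathcal{S}\mathrm{h}(\mathcal{X})^{\omega}$, we obtain a canonical equivalence
\[
	\mathcal{S}\mathrm{h}(\mathcal{X}) \simeq \mathrm{Ind}(\mathcal{D})\thinspace.
\]

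Finally, \cref{Ind=Cont} asserts exactly that $\mathrm{Ind}(\mathcal{D})$ is a continuous $\infty$\=/category for any small $\mathcal{D}$. Combining this with the previous equivalence shows that $\mathcal{S}\mathrm{h}(\mathcal{X})$ is continuous, and then \cref{ExpTh} yields that $\mathcal{X}$ is exponentiable.

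There is essentially no obstacle here: the proposition is a direct specialisation of \cref{ExpTh} once we observe that $\omega$\=/presentability is a sufficient condition for the colimit functor from ind-objects to admit a left adjoint, which is witnessed by the fully faithful right adjoint $\alpha = \mathrm{Ind}(\alpha_0)$ constructed in the proof of \cref{Ind=Cont}. The only minor care needed is to note the standing convention that ``$\omega$\=/presentable'' means compactly generated in the sense of HTT, so that the identification $\mathcal{S}\mathrm{h}(\mathcal{X}) \simeq \mathrm{Ind}(\mathcal{S}\mathrm{h}(\mathcal{X})^{\omega})$ is immediate.
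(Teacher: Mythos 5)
Your proof is correct and follows essentially the same route as the paper: identify $\mathcal S\mathrm{h}(\mathcal X)$ with $\mathrm{Ind}(D)$ for a small $D$ using $\omega$\=/presentability, invoke \cref{Ind=Cont} for continuity, and conclude by \cref{ExpTh}. One small slip in your closing gloss: in the proof of \cref{Ind=Cont} it is the \emph{left} adjoint $\beta = \mathrm{Ind}(\alpha)$ that witnesses continuity, not the right adjoint $\alpha$ — but this does not affect your argument, which uses that lemma as a black box.
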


\begin{proof}
If $\mathcal S\mathrm{h}(\mathcal X)$ is $\omega$-presentable,
then there exists a small $\infty$\=/category $D$ such that
$\mathcal S\mathrm{h}(\mathcal X) \simeq \mathrm{Ind}(D)$
and by \cref{Ind=Cont}, the $\infty$\=/category
$\mathcal S\mathrm{h}(\mathcal X)$ is continuous.
\end{proof}

In particular all the affine $\infty$-toposes $\mathbb A^D$ are
exponentiable. Also all $\infty$\=/toposes $\mathcal X$ such that
$\mathcal S\mathrm{h}(\mathcal X)$ is a presheaf $\infty$\=/category. In
particular if $G$ is a discrete group then $\mathcal B\mathrm{G}$ is an
exponentiable $\infty$\=/topos. 
Another class of examples is given by the locally coherent $n$-toposes.

\begin{definition}
Let $C$ be an $n$-category which admits finite limits. We
will say that a Grothendieck topology on $C$ is finitary if for every
object $c \in C$ and every covering sieve $C^{(0)}_{/c} \subset C_{/c}$ there
exists a finite collection of morphisms ${\{c_i \to c\}}_{i \in I}$ in
$C^{(0)}_{/c}$ which generates the sieve $C^{(0)}_{/c}$.
\end{definition}

\begin{definition}
Let $n < \infty$, an $n$-topos is locally coherent if it is
an $n$-topos associated to a finitary $n$-site.
\end{definition}

\begin{proposition}
Let $n < \infty$ and $\mathcal X$ be a locally coherent
$n$-topos, then $\mathcal X$ is exponentiable.
\end{proposition}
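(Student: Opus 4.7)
The plan is to reduce the statement to \cref{thm:omega_presentable_implique_exponentiable}: it suffices to prove that $\mathcal S\mathrm{h}(\mathcal X)$ is $\omega$-presentable, as an $n$-category. Write $\mathcal X$ as the $n$-topos associated with a finitary $n$-site $(C,\tau)$, so that $\mathcal S\mathrm{h}(\mathcal X)$ is a left exact, accessible reflective subcategory of the $n$-presheaf $\infty$-category $\mathcal P_{\leq n}(C)$ (here we truncate presheaves of spaces at level $n$). The latter is visibly $\omega$-presentable, generated under small colimits by its full subcategory of representable $n$-presheaves, which are $\omega$-compact.

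The heart of the argument is to show that the sheafification $L : \mathcal P_{\leq n}(C) \to \mathcal S\mathrm{h}(\mathcal X)$ preserves $\omega$-filtered colimits, equivalently that the inclusion $\mathcal S\mathrm{h}(\mathcal X) \hookrightarrow \mathcal P_{\leq n}(C)$ is $\omega$-accessible. Two hypotheses are doing the work here. First, the topology is \emph{finitary}, so each covering sieve is generated by a finite family ${\{c_i \to c\}}_{i\in I}$; the associated descent condition is thus a limit over the finite Čech diagram of $\{c_i \to c\}$, and finite limits commute with filtered colimits in $\mathcal S$. Second, we have $n<\infty$, which ensures on the one hand that the plus-construction converges in finitely many steps when evaluated on $n$-truncated presheaves, and on the other hand that $\omega$-filtered colimits of $n$-truncated objects stay $n$-truncated. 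Combining these, a filtered colimit of sheaves, computed pointwise in $\mathcal P_{\leq n}(C)$, remains a sheaf. This shows that the reflective localisation $L$ is $\omega$-accessible.

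From this one deduces that $\mathcal S\mathrm{h}(\mathcal X)$ is itself $\omega$-presentable: pick a small generating family of $\omega$-compact presheaves in $\mathcal P_{\leq n}(C)$; because $L$ preserves $\omega$-filtered colimits, their images remain $\omega$-compact in $\mathcal S\mathrm{h}(\mathcal X)$ and still generate. By \cref{Ind=Cont}, $\mathcal S\mathrm{h}(\mathcal X) \simeq \mathrm{Ind}(D)$ for some small $D$ is continuous, so by \cref{ExpTh} (or directly by \cref{thm:omega_presentable_implique_exponentiable}) the $n$-topos $\mathcal X$ is exponentiable.

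The main obstacle is the careful verification that sheafification against a finitary $n$-site is $\omega$-accessible. The only subtlety is the interplay between the $+$-construction and $n$-truncation, which terminates at a finite stage precisely because $n<\infty$; without this bound, a filtered colimit of sheaves need not be a sheaf even for a finitary topology, as observed in HTT~\cite[\S6.5.4]{doi:10.1515/9781400830558}.
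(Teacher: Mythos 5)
Your proposal follows essentially the same route as the paper: finitariness makes each level of the descent (Čech/cosimplicial) diagram a finite limit, the bound $n<\infty$ lets the cosimplicial limit be computed after truncation at a finite stage, hence the inclusion of sheaves into presheaves commutes with filtered colimits, so $\mathcal S\mathrm{h}(\mathcal X)$ is $\omega$-presentable and \cref{thm:omega_presentable_implique_exponentiable} applies. The additional remarks about the plus-construction are not needed but do no harm.
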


\begin{proof}
If $C$ is a finitary $n$-site, then $\mathcal S\mathrm{h}(C)$ is
$\omega$-presentable. Indeed the sheaf condition for $\mathcal F \in \mathcal
S\mathrm{h}(C)$ boils down to finite limit conditions. All sieves are
generated by finite collections ${\{c_i \to c\}}_{i \in I}$ so the sheaf
condition:
\[
	\begin{tikzcd}
		\displaystyle \prod_i \mathcal F(c_i) \arrow[r, shift left=2]
		\arrow[r, shift right=2]
		& \displaystyle \prod_{i \to j} \mathcal F(c_j) \arrow[l]
		\arrow[r, shift left=4] \arrow[r] \arrow[r, shift right=4] &
		\arrow[l, shift left=2] \arrow[l, shift right=2] \cdots 
	\end{tikzcd}
\]
involves
only finite products at each level and there is only a finite number of levels
because in $\mathcal S_{\leq n-1}$ limits of cosimplicial objects can be
computed after being truncated at level $n$.

The consequence is that the inclusion
$\mathcal S\mathrm{h}(C) \hookrightarrow \mathcal P(C)$
commutes with filtered colimits, which means
that the reflective localisation $\mathcal P(C) \to \mathcal S\mathrm{h}(C)$ is
$\omega$-accessible, so $\mathcal S\mathrm{h}(C)$ is $\omega$-presentable and
$\mathcal X$ is exponentiable by
\cref{thm:omega_presentable_implique_exponentiable}.
\end{proof}

The following two propositions are trivial properties of
exponentiable objects in an $\infty$\=/category with finite limits.

\begin{proposition}
Let $\mathcal X$ and $\mathcal Y$ be two exponentiable
$\infty$\=/toposes, then $\mathcal X \times \mathcal Y$ is exponentiable.
\end{proposition}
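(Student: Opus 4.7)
The plan is to derive this proposition as a purely formal consequence of the definition of exponentiability, with no appeal to the structural characterisation via continuous $\infty$\=/categories. Recall that $\mathcal X$ is exponentiable precisely when the functor
\[
	(-)\thinspace \times \thinspace\mathcal X : \mathcal T\mathrm{op} \longrightarrow \mathcal T\mathrm{op}
\]
admits a right adjoint, which we denote by ${(-)}^{\mathcal X}$; similarly for $\mathcal Y$ with right adjoint ${(-)}^{\mathcal Y}$.

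The first step is to observe the canonical equivalence of endofunctors of $\mathcal T\mathrm{op}$:
\[
	(-)\thinspace \times \thinspace(\mathcal X \times \mathcal Y) \simeq \left((-)\thinspace \times\thinspace \mathcal X\right) \thinspace\times\thinspace \mathcal Y\thinspace ,
\]
which follows from the associativity and symmetry of binary products. Thus the functor given by product with $\mathcal X \times \mathcal Y$ is the composite of the product functors with $\mathcal X$ and with $\mathcal Y$.

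The second step is then immediate: the composite of two functors each admitting a right adjoint itself admits a right adjoint, namely the composite (in the opposite order) of the right adjoints. We therefore obtain the right adjoint ${\left({(-)}^{\mathcal Y}\right)}^{\mathcal X}$, which demonstrates that $\mathcal X \times \mathcal Y$ is exponentiable with exponential
\[
	{\mathcal Z}^{\mathcal X \times \mathcal Y} \simeq {\left({\mathcal Z}^{\mathcal Y}\right)}^{\mathcal X}
\]
for every $\mathcal Z \in \mathcal T\mathrm{op}$. There is no real obstacle to overcome: the argument is entirely formal and valid in any $\infty$\=/category with finite products, as the author's preamble to the proposition already notes.
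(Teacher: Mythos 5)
Your argument is correct and is precisely the formal one the paper has in mind: the authors omit the proof, remarking only that this is a trivial property of exponentiable objects in any $\infty$\=/category with finite products, and your factorisation of $(-)\times(\mathcal X\times\mathcal Y)$ as the composite of the two product functors, each with a right adjoint, is exactly that trivial argument. Nothing further is needed.
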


\begin{proposition}
Let $\mathcal X$ be an exponentiable $\infty$\=/topos and $r :
\mathcal X \to \mathcal Y$ a retraction. Then $\mathcal Y$ is also
exponentiable.
\end{proposition}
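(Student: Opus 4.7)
The plan is to argue formally, exploiting the fact that exponentiability in any $\infty$\=/category with finite limits is preserved by retracts whenever idempotents split. Since $\mathcal T\mathrm{op}$ admits all small limits, idempotents split there by HTT (4.4.5.16), so the formal argument applies.

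First, let $s : \mathcal Y \to \mathcal X$ be the section of $r$, so $r \circ s \simeq \mathrm{id}_{\mathcal Y}$, and the endomorphism $e = s \circ r : \mathcal X \to \mathcal X$ is an idempotent whose splitting is $\mathcal Y$ (with structure maps $r$ and $s$).

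Second, for every $\mathcal Z \in \mathcal T\mathrm{op}$, exponentiability of $\mathcal X$ provides the particular exponential $\mathcal Z^{\mathcal X}$, and pre-composition by $e$ in the exponent induces an idempotent endomorphism $e^\sharp : \mathcal Z^{\mathcal X} \to \mathcal Z^{\mathcal X}$. Let $\mathcal Z^{\mathcal Y}$ be its splitting in $\mathcal T\mathrm{op}$. For any $\mathcal W \in \mathcal T\mathrm{op}$, the functor $\mathcal W \times -$ preserves splittings of idempotents (as a finite-limit-preserving functor on an idempotent-complete $\infty$\=/category), so $\mathcal W \times \mathcal Y$ is the splitting of $\mathrm{id}_{\mathcal W} \times e$ on $\mathcal W \times \mathcal X$. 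Applying $\mathrm{Map}(-, \mathcal Z)$, which turns splittings in $\mathcal T\mathrm{op}$ into splittings of spaces, and using the exponential adjunction for $\mathcal X$, one obtains
\[
    \mathrm{Map}(\mathcal W, \mathcal Z^{\mathcal Y})
    \simeq \mathrm{Map}(\mathcal W \times \mathcal Y, \mathcal Z)
\]
functorially in $\mathcal W$, which is the universal property of a particular exponential. Since this holds for every $\mathcal Z$, \cref{PartExp=>Exp} shows $\mathcal Y$ is exponentiable.

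The argument is entirely formal; the only point requiring a non-trivial input is the splitting of idempotents, which is immediate for $\mathcal T\mathrm{op}$. Alternatively, and perhaps more in the spirit of the previous sections, one can invoke the main characterisation: by \cref{Exp=>Cont} the $\infty$\=/logos $\mathcal S\mathrm{h}(\mathcal X)$ is continuous, and the section-retraction $r \circ s \simeq \mathrm{id}$ exhibits $\mathcal S\mathrm{h}(\mathcal Y)$ as a retract of $\mathcal S\mathrm{h}(\mathcal X)$ in $\mathcal L\mathrm{og}$ along cocontinuous left exact (and in particular $\omega$\=/continuous) functors; then \cref{RetractOfCont} yields continuity of $\mathcal S\mathrm{h}(\mathcal Y)$, and \cref{ExpTh} concludes.
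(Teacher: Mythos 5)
Your proof is correct, and it is essentially the paper's own argument: the paper states this proposition without proof as a ``trivial property of exponentiable objects in an $\infty$\=/category with finite limits'', which is exactly the formal retract argument you spell out (the only inputs being that $\mathcal T\mathrm{op}$ is idempotent-complete and that the idempotents you split are induced by an already-split one, hence coherent). Your alternative route through \cref{Exp=>Cont}, \cref{RetractOfCont} and \cref{ExpTh} is also valid and non-circular, since this proposition is not used in the proof of \cref{ExpTh}.
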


\begin{proposition}
Let $\mathcal X \to \mathcal Y$ be an étale morphism. If
$\mathcal Y$ is exponentiable, so is $\mathcal X$. In particular open
subtoposes of $\mathcal Y$ are exponentiable.
\end{proposition}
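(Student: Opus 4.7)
The plan is to apply \cref{ExpTh}, reducing the problem to showing that $\mathcal S\mathrm{h}(\mathcal X)$ is a continuous $\infty$\=/category. Since $f : \mathcal X \to \mathcal Y$ is étale, the glossary identifies $\mathcal S\mathrm{h}(\mathcal X)$ with the slice $\mathcal S\mathrm{h}(\mathcal Y)_{/U}$ for some $U \in \mathcal S\mathrm{h}(\mathcal Y)$, and \cref{Exp=>Cont} applied to $\mathcal Y$ supplies a left adjoint $\beta$ to the colimit functor $\varepsilon : \mathrm{Ind}(\mathcal S\mathrm{h}(\mathcal Y)) \to \mathcal S\mathrm{h}(\mathcal Y)$. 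The core of the argument thus becomes the purely categorical statement that the slice of a continuous $\infty$\=/category by any object is again continuous.

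To prove this, I would first identify $\mathrm{Ind}(\mathcal S\mathrm{h}(\mathcal Y)_{/U})$ with $\mathrm{Ind}(\mathcal S\mathrm{h}(\mathcal Y))_{/U}$, viewing $U$ on the right as an object of $\mathrm{Ind}(\mathcal S\mathrm{h}(\mathcal Y))$ via the Yoneda embedding; both sides share the universal property of being the free filtered cocompletion of $\mathcal S\mathrm{h}(\mathcal Y)_{/U}$. Under this identification the colimit functor $\varepsilon_U$ of the slice sends a pair $(W \to U) \mapsto (\varepsilon(W) \to U)$, using that $\varepsilon(U) \simeq U$. I would then define the candidate left adjoint $\beta_U$ by $(V \to U) \mapsto (\beta(V) \to U)$, where the map $\beta(V) \to U$ in $\mathrm{Ind}(\mathcal S\mathrm{h}(\mathcal Y))$ corresponds under the adjunction isomorphism $\mathrm{Map}_{\mathrm{Ind}}(\beta(V), U) \simeq \mathrm{Map}(V, \varepsilon(U)) \simeq \mathrm{Map}(V, U)$ to the structure map $V \to U$.

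The adjunction $(\beta_U \dashv \varepsilon_U)$ then follows from the fiber-sequence description of mapping spaces in a slice: applied to both entries and combined with the original adjunction $(\beta \dashv \varepsilon)$, a direct manipulation identifies $\mathrm{Map}_{/U}(\beta(V), W)$ with $\mathrm{Map}_{/U}(V, \varepsilon(W))$, the basepoints matching because they are both induced by the common map $V \to U$. The step I expect to be the main obstacle is justifying the equivalence $\mathrm{Ind}(\mathcal S\mathrm{h}(\mathcal Y)_{/U}) \simeq \mathrm{Ind}(\mathcal S\mathrm{h}(\mathcal Y))_{/U}$ cleanly in the very large $\infty$\=/categorical setting, as these ind-completions are not taken on small $\infty$\=/categories; the remaining verification is essentially bookkeeping. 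The final clause about open subtoposes is then immediate, since by the glossary an open subtopos is by definition an étale inclusion.
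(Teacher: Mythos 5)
Your proposal is correct and is essentially the paper's own argument in expanded form: the paper's proof is the single observation that colimits in the slice $\infty$\=/topos are computed via the projection $\pi_! : \mathcal S\mathrm{h}(\mathcal Y)_{/U} \to \mathcal S\mathrm{h}(\mathcal Y)$, which is exactly the identification $\mathrm{Ind}(\mathcal S\mathrm{h}(\mathcal Y)_{/U}) \simeq \mathrm{Ind}(\mathcal S\mathrm{h}(\mathcal Y))_{/U}$ together with the objectwise description of $\varepsilon_U$ and the lifted left adjoint $\beta_U$ that you spell out. The extra details you supply (the fibre-sequence check of the adjunction, the compactness of objects $(c \to U)$ in the slice) are the correct way to fill in what the paper leaves implicit.
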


\begin{proof}
The $\infty$\=/category ${\mathcal S\mathrm{h}(\mathcal Y)}_{/U}$ is
continuous because colimits in the slice $\infty$\=/topos can be computed using
the projection $\pi_! : \mathcal Y_{/U} \to \mathcal Y$.
\end{proof}

\begin{corollary}%
\label{Qcqs=>Exp}
Let $X$ be a locally quasi-compact and
quasi-separated topological space, then its associated $\infty$\=/topos is an
exponentiable $\infty$\=/topos.
\end{corollary}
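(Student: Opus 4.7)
The plan is to invoke \cref{ExpTh}: it suffices to show that $\mathcal{S}\mathrm{h}(X)$ is a continuous $\infty$\=/category, and the strategy is to exhibit a standard presentation for $\mathcal{S}\mathrm{h}(X)$ coming directly from the topology of $X$.

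The classical input is that under the two hypotheses on $X$ the frame $\mathcal{O}(X)$ is a continuous lattice for the way-below relation $V \ll U$ defined by the existence of a quasi-compact subset $K$ with $V \subseteq K \subseteq U$: local quasi-compactness yields the identity $U = \bigcup_{V\,\ll\,U} V$ for every open $U$, while quasi-separatedness ensures that $\ll$ is stable under finite meets. This is exactly the frame-theoretic continuity at the heart of Johnstone--Joyal's original argument for ordinary toposes.

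To upgrade this to $\infty$\=/categorical continuity of $\mathcal{S}\mathrm{h}(X)$ I would invoke HTT Theorem 7.3.4.9, which identifies $\mathcal{S}\mathrm{h}(X)$ with the $\infty$\=/category of $\mathcal{K}$-sheaves, i.e.\ left exact functors $\mathcal{F} : \mathcal{K}(X)^{\mathrm{op}} \to \mathcal{S}$ from the poset of quasi-compact subsets satisfying the continuity condition $\mathcal{F}(K) \simeq \varinjlim_{K\,\ll\,K'} \mathcal{F}(K')$. Letting $D$ be the smallest finitely bicomplete full subcategory of $\mathcal{S}\mathrm{h}(X)$ containing the objects attached to $\mathcal{K}(X)$, the $\mathcal{K}$-sheaf description exhibits $\mathcal{S}\mathrm{h}(X)$ as a fully faithful left-adjoint retract of $\mathrm{Ind}(D)$, where the left adjoint sends a sheaf to the formal filtered colimit encoded by the wavy relation $\ll$---this is precisely a standard presentation, in close analogy with the bimodule construction used in the proof of \cref{ExpA1}. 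Continuity of $\mathcal{S}\mathrm{h}(X)$ then follows, and \cref{ExpTh} concludes.

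The main obstacle is that HTT 7.3.4.9 is stated for locally compact Hausdorff spaces, whereas here we only assume the strictly weaker conditions of local quasi-compactness and quasi-separatedness. I expect this to be routine rather than serious: the role played by Hausdorffness in Lurie's argument---essentially that intersections of compact subsets are compact and that compact subsets are closed---is exactly what quasi-separatedness restores once compact is replaced by quasi-compact throughout. The only genuine verification needed is therefore that the chain of equivalences in Lurie's proof goes through after this substitution, after which the smallness of the chosen subcategory $D$ and the identification of the resulting adjunction with a standard presentation are formal.
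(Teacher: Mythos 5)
Your high-level strategy --- reduce to continuity of $\mathcal{S}\mathrm{h}(X)$ and invoke \cref{ExpTh} --- is sound, but the way you propose to establish continuity has a genuine gap. You defer the entire technical content to the claim that HTT Theorem 7.3.4.9 extends from locally compact Hausdorff spaces to locally quasi-compact quasi-separated ones after replacing compact by quasi-compact throughout, and your stated reason for why this is routine is incorrect: quasi-separatedness does \emph{not} restore the property that quasi-compact subsets are closed (the Zariski spectrum of a ring is quasi-compact and quasi-separated, yet its quasi-compact subsets are typically not closed). Lurie's proof of 7.3.4.9 uses Hausdorffness through shrinking arguments of the form ``$K \subset U$ implies there is $K'$ with $K \subset \mathrm{int}(K') \subset K' \subset U$'' and through the interplay of compact and closed subsets, and none of this survives the substitution. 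In the quasi-separated setting the correct indexing object is the poset of quasi-compact \emph{open} subsets, and once that replacement is made the statement you need is exactly that the frame $\mathcal{O}(X)$ is (stably) continuous --- so your argument, repaired, collapses onto the frame-theoretic input of your second paragraph, and the detour through a $\mathcal{K}$-sheaf model adds an unverified theorem rather than a proof.

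The paper avoids this entirely: its proof stays at the level of locales, observing that local quasi-compactness together with quasi-separatedness exhibits $\mathcal{O}(X)$ as a retract of $\mathrm{Ind}(\mathcal{O}(X))_{/X}$, then passes to the associated $\infty$-toposes, notes that the larger one is exponentiable by \cref{thm:omega_presentable_implique_exponentiable}, and concludes because exponentiability passes to retracts. If you want to keep your route, the honest fix is to drop the appeal to HTT 7.3.4.9 and instead deduce continuity of $\mathcal{S}\mathrm{h}(X)$ directly from continuity of the frame $\mathcal{O}(X)$, for instance via the induced retraction of sheaf $\infty$-categories and \cref{RetractOfCont} --- which is essentially the paper's argument.
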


\begin{proof}
If $X$ is locally quasi-compact and quasi-separated,
then the frame $\mathcal O(X)$ is a retract of
${\mathrm{Ind}(\mathcal O(X))}_{/X}$. Passing to the associated
$\infty$-toposes and
using \cref{thm:omega_presentable_implique_exponentiable} proves the corollary.
\end{proof}

\begin{remark}
This corollary implies in particular that $\infty$\=/toposes associated to
locally quasi-compact and Hausdorff
topological spaces are exponentiable. An independent proof of
that statement is given in HTT%
~\cite[Theorem 7.3.4.9]{doi:10.1515/9781400830558}.
\end{remark}

The following proposition encompasses some of the previous ones.

\begin{proposition}%
\label{LimDiagExp}
Let $f : I \to \mathcal T\mathrm{op}$ be
a small diagram of exponentiable $\infty$\=/toposes. Suppose also that for any
arrow $i \to j$ in $I$,  the following square commutes:
\[
	\begin{tikzcd}
		\mathrm{Ind}(\mathcal S\mathrm{h}(\mathcal X_j))
		\arrow[rr, "\mathrm{Ind}(f_{ij}^\ast)"]
		&& \mathrm{Ind}(\mathcal S\mathrm{h}(\mathcal X_i)) \\
		\mathcal S\mathrm{h}(\mathcal X_j) \arrow[u, "\beta_j"]
		\arrow[rr, "f_{ij}^\ast"] && \mathcal S\mathrm{h}(\mathcal X_i)
		\arrow[u, "\beta_i"]\thinspace.
	\end{tikzcd}
\]
Then,
\begin{itemize}
	\item the colimit of $f$ is exponentiable;
	\item if $I$ is cofiltered, the limit of $f$ is exponentiable.
\end{itemize}
\end{proposition}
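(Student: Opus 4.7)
By \cref{ExpTh}, it suffices in both parts to show that $\mathcal{S}\mathrm{h}(\mathcal{X})$ is continuous, i.e.\ that the colimit functor $\varepsilon \colon \mathrm{Ind}(\mathcal{S}\mathrm{h}(\mathcal{X})) \to \mathcal{S}\mathrm{h}(\mathcal{X})$ admits a left adjoint $\beta$. The naturality hypothesis $\mathrm{Ind}(f_{ij}^{\ast}) \circ \beta_j \simeq \beta_i \circ f_{ij}^{\ast}$ says precisely that the local left adjoints $\beta_i$ form a morphism of $I$\=/shaped diagrams $\{\beta_i\} \colon \mathcal{S}\mathrm{h}(\mathcal{X}_\bullet) \Rightarrow \mathrm{Ind}(\mathcal{S}\mathrm{h}(\mathcal{X}_\bullet))$, and the strategy in both cases is to glue them into the required global $\beta$.

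For the colimit of $f$, we have $\mathcal{S}\mathrm{h}(\mathcal{X}) \simeq \varprojlim_i \mathcal{S}\mathrm{h}(\mathcal{X}_i)$ in $\mathcal{L}\mathrm{og}$ with canonical projections $\pi_i$. The compatible family $(\beta_i \circ \pi_i)_i$ yields, by the universal property of the limit, a candidate $\tilde\beta \colon \mathcal{S}\mathrm{h}(\mathcal{X}) \to \varprojlim_i \mathrm{Ind}(\mathcal{S}\mathrm{h}(\mathcal{X}_i))$. One then factors $\tilde\beta$ through the canonical comparison map $\mathrm{Ind}(\mathcal{S}\mathrm{h}(\mathcal{X})) \to \varprojlim_i \mathrm{Ind}(\mathcal{S}\mathrm{h}(\mathcal{X}_i))$, using cocontinuity and left exactness of the $\beta_i$ to see that each $\beta_i(\pi_i c)$ is the image of a single filtered diagram coming from $\mathcal{S}\mathrm{h}(\mathcal{X})$. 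Once this factorisation $\beta \colon \mathcal{S}\mathrm{h}(\mathcal{X}) \to \mathrm{Ind}(\mathcal{S}\mathrm{h}(\mathcal{X}))$ is established, the adjunction $\beta \dashv \varepsilon$ is checked after applying each $\mathrm{Ind}(\pi_i)$, which reduces the question to the pointwise adjunctions $\beta_i \dashv \varepsilon_i$.

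For the cofiltered limit of $f$, $\mathcal{S}\mathrm{h}(\mathcal{X}) \simeq \varinjlim_i \mathcal{S}\mathrm{h}(\mathcal{X}_i)$ is a filtered colimit in $\mathcal{L}\mathrm{og}$. This case is more direct: the naturality hypothesis makes $\{\beta_i\}$ into a filtered system of functors, and one sets $\beta := \varinjlim_i \beta_i$; the adjunction $\beta \dashv \varepsilon$ follows from commutation of mapping spaces with filtered colimits evaluated on generators together with the pointwise adjunctions $\beta_i \dashv \varepsilon_i$.

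The principal obstacle is the lifting step in the colimit case, since the natural comparison $\mathrm{Ind}(\varprojlim_i \mathcal{S}\mathrm{h}(\mathcal{X}_i)) \to \varprojlim_i \mathrm{Ind}(\mathcal{S}\mathrm{h}(\mathcal{X}_i))$ is not an equivalence in general; it is precisely the naturality of $\{\beta_i\}$ that forces the image of $\tilde\beta$ to lie in the essential image of this comparison. Once continuity is established, \cref{ExpTh} concludes the proof.
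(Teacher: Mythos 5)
Your overall strategy is the same as the paper's: reduce to continuity of the $\infty$\=/logos via \cref{ExpTh}, compute the limit (resp.\ filtered colimit) of the sheaf categories in $\widehat{\mathcal C\mathrm{at}}$, and assemble the local left adjoints $\beta_i$ into a global one using the commuting squares. The cofiltered case is handled as in the paper and is essentially fine. The gap is in the colimit case, at exactly the step you yourself flag as ``the principal obstacle''. First, your justification for factorising $\tilde\beta$ through the comparison functor $\Phi : \mathrm{Ind}\bigl(\varprojlim_i \mathcal S\mathrm{h}(\mathcal X_i)\bigr) \to \varprojlim_i \mathrm{Ind}\bigl(\mathcal S\mathrm{h}(\mathcal X_i)\bigr)$ invokes ``left exactness of the $\beta_i$'': the $\beta_i$ are left adjoints, hence cocontinuous, but nothing in the hypotheses makes them left exact, and the claim that the compatible family $\bigl(\beta_i(\pi_i c)\bigr)_i$ is realised by a single filtered diagram in $\varprojlim_i \mathcal S\mathrm{h}(\mathcal X_i)$ is asserted rather than proved. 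Second, and more seriously, checking the adjunction ``after applying each $\mathrm{Ind}(\pi_i)$'' silently uses that $\mathrm{Map}(\beta c, d)$ in $\mathrm{Ind}\bigl(\varprojlim_i \mathcal C_i\bigr)$ is the limit over $i$ of the mapping spaces $\mathrm{Map}(\beta_i c_i, d_i)$ in $\mathrm{Ind}(\mathcal C_i)$, i.e.\ that $\Phi$ is fully faithful on the relevant objects. Unwinding the ind-mapping-space formula, this amounts to commuting a filtered colimit past the limit over $I$, which fails for infinite $I$ (already for an infinite product, since filtered colimits of spaces do not commute with infinite products). So neither half of your workaround is actually established, and the second cannot be established in the generality in which you state it.

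For comparison, the paper's own proof is blunter: it asserts ``by direct computation'' that $\varprojlim_i \mathrm{Ind}\bigl(\mathcal S\mathrm{h}(\mathcal X_i)\bigr) \simeq \mathrm{Ind}\bigl(\varprojlim_i \mathcal S\mathrm{h}(\mathcal X_i)\bigr)$ and transports the assembled $\beta$ across this equivalence. Your suspicion that this comparison is not an equivalence for a general small $I$ is well founded (it is fine for finite $I$, where finite limits commute with filtered colimits), so you have correctly located the delicate point that the paper glosses over; but locating it is not closing it. A complete argument must produce the corepresenting object $\beta(c)$ inside $\mathrm{Ind}\bigl(\varprojlim_i \mathcal C_i\bigr)$ by other means than pulling it back along $\Phi$ --- for instance, in the case of a coproduct of $\infty$\=/toposes one can take $\beta(c) = \coprod_i \mathrm{Ind}(\iota_i)(\beta_i c_i)$, where $\iota_i$ is left adjoint to the projection $\pi_i$, and verify corepresentability directly. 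As written, your proof (like the paper's) does not supply this.
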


\begin{proof}
By sections 6.3.2 and 6.3.3 in HTT%
~\cite{doi:10.1515/9781400830558},
limits and filtered
colimits of $\infty$\=/categories of sheaves can be computed in
$\widehat{\mathcal C\mathrm{at}}$. By direct computation,
$\varprojlim \mathrm{Ind}(\mathcal S\mathrm{h}(\mathcal X_i)) \simeq
\mathrm{Ind}(\varprojlim \mathcal S\mathrm{h}(\mathcal X_i))$
and thanks to the commuting squares we requested, we get a functor
$\beta : \varprojlim \mathcal S\mathrm{h}(\mathcal X_i)
\to \mathrm{Ind}(\mathcal S\mathrm{h}(\mathcal X_i))$
left adjoint to the evaluation functor, so that
$\varprojlim \mathcal S\mathrm{h}(\mathcal X_i)$ is continuous.

In the same way, if $I$ is cofiltered, then
$\varinjlim \mathrm{Ind}(\mathcal S\mathrm{h}(\mathcal X_i))
\simeq \mathrm{Ind}(\varinjlim \mathcal
S\mathrm{h}(\mathcal X_i))$
and we end up with the same conclusion.
\end{proof}
\begin{remark}
As a consequence, a colimit of a diagram of exponentiable
$\infty$\=/toposes with étale maps is exponentiable.
\end{remark}

\begin{corollary}
Let $f : I \to \mathcal T\mathrm{op}$ be a small cofiltered
diagram of exponentiable
$\infty$\=/toposes. Assume that for every arrow $i \to j$, the corresponding
morphism $f(i) \to f(j)$ is proper and that the $\infty$\=/logos associated
to each $f(i)$ is $\omega$-accessible. Then the limit of $f$ is also
an exponentiable $\infty$\=/topos.
\end{corollary}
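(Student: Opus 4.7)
The plan is to apply \cref{LimDiagExp}: since $I$ is cofiltered, it suffices to verify that the naturality square
\[
	\begin{tikzcd}
		\mathrm{Ind}(\mathcal S\mathrm{h}(\mathcal X_j))
		\arrow[rr, "\mathrm{Ind}(f_{ij}^\ast)"]
		&& \mathrm{Ind}(\mathcal S\mathrm{h}(\mathcal X_i)) \\
		\mathcal S\mathrm{h}(\mathcal X_j) \arrow[u, "\beta_j"]
		\arrow[rr, "f_{ij}^\ast"] && \mathcal S\mathrm{h}(\mathcal X_i)
		\arrow[u, "\beta_i"]
	\end{tikzcd}
\]
commutes for every arrow $i \to j$ in $I$.

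Using the $\omega$\=/accessibility hypothesis, I would first write each $\mathcal S\mathrm{h}(\mathcal X_i) \simeq \mathrm{Ind}(D_i)$ with $D_i$ the (essentially small) full subcategory of $\omega$\=/compact objects. The proof of \cref{Ind=Cont} then identifies the left adjoint $\beta_i$ with $\mathrm{Ind}(\alpha_i)$ for $\alpha_i$ the Yoneda embedding; in particular $\beta_i$ restricted to $D_i$ is the canonical inclusion of $D_i$ as representables in $\mathrm{Ind}(\mathcal S\mathrm{h}(\mathcal X_i))$. Next, I would invoke properness of $f_{ij}$: by the stable Beck--Chevalley condition, $f_{ij,\ast}$ preserves filtered colimits (HTT~\cite{doi:10.1515/9781400830558}, Chapter 7.3), and a routine adjunction argument on mapping spaces then shows that $f_{ij}^\ast$ preserves $\omega$\=/compact objects, i.e.\ sends $D_j$ into $D_i$.

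With these two ingredients the verification is immediate. For $d \in D_j$, the upper composite yields $\mathrm{Ind}(f_{ij}^\ast)(\alpha_j(d)) \simeq \alpha_i(f_{ij}^\ast(d))$ by naturality of the Ind-construction on representables, while the lower composite yields $\beta_i(f_{ij}^\ast(d)) \simeq \alpha_i(f_{ij}^\ast(d))$ precisely because $f_{ij}^\ast(d)$ is $\omega$\=/compact. Both composites preserve filtered colimits (the $\beta$'s are left adjoints, and both $f_{ij}^\ast$ and $\mathrm{Ind}(f_{ij}^\ast)$ preserve filtered colimits by construction), so their agreement on the generators $D_j$ propagates to all of $\mathcal S\mathrm{h}(\mathcal X_j) \simeq \mathrm{Ind}(D_j)$. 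The only real obstacle is conceptual: recognising that properness is exactly the hypothesis which forces $f_{ij}^\ast$ to respect the $\omega$\=/compact generators on which $\beta_i$ admits its clean description, after which \cref{LimDiagExp} closes the argument.
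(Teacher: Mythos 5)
Your proposal is correct and follows essentially the same route as the paper: properness gives that $f_{ij,\ast}$ is $\omega$\-/continuous, hence $f_{ij}^\ast$ preserves $\omega$\-/compact objects, which is exactly what makes the naturality square of \cref{LimDiagExp} commute. The paper leaves the verification of that square implicit after citing HTT remark 7.3.1.5, whereas you spell it out by checking on the compact generators and propagating along filtered colimits; the substance is the same.
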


\begin{proof}
Let $f : \mathcal X \to \mathcal Y$ be an arrow in such a diagram. Then by
assumption both $\mathcal S\mathrm{h}(\mathcal X)$ and
$\mathcal S\mathrm{h}(\mathcal Y)$ are $\infty$\=/categories of ind-objects.
In addition since $f$ is proper, by remark 7.3.1.5 in HTT%
~\cite{doi:10.1515/9781400830558}, $f_\ast$ is $\omega$\=/continuous so that
$f^\ast$ preserves $\omega$\=/compact objects. We can then apply
\cref{LimDiagExp}.
\end{proof}

We now describe subtoposes of an exponentiable $\infty$\=/topos.

\begin{proposition}
Let $\mathcal X$ be an exponentiable $\infty$\=/topos and $i
: \mathcal Y \subset \mathcal X$ be a subtopos. If the reflective localisation
$i^\ast : \mathcal S\mathrm{h}(\mathcal X) \to \mathcal S\mathrm{h}(\mathcal
Y)$ is $\omega$\=/accessible, then $\mathcal Y$ is exponentiable.
\end{proposition}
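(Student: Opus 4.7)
The plan is to apply the exponentiability theorem \cref{ExpTh} directly and reduce the claim to showing that $\mathcal{Sh}(\mathcal Y)$ is a continuous $\infty$-category. Since $\mathcal X$ is exponentiable, we already know by \cref{Exp=>Cont} that $\mathcal{Sh}(\mathcal X)$ is continuous, so the strategy is to transfer continuity from $\mathcal{Sh}(\mathcal X)$ to the reflective subcategory $\mathcal{Sh}(\mathcal Y)$ using \cref{RetractOfCont}.

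First I would unpack the hypothesis. Writing $i_\ast$ for the fully faithful right adjoint to $i^\ast$, we have a reflective localisation with $i^\ast i_\ast \simeq \mathrm{Id}_{\mathcal{Sh}(\mathcal Y)}$, which already exhibits $\mathcal{Sh}(\mathcal Y)$ as a retract of $\mathcal{Sh}(\mathcal X)$ in $\widehat{\mathcal C\mathrm{at}}$. To invoke \cref{RetractOfCont} I need both the retraction $i^\ast$ and the section $i_\ast$ to commute with filtered colimits. The functor $i^\ast$ is cocontinuous, hence in particular $\omega$-continuous. The assumption that the localisation $i^\ast$ is $\omega$-accessible means precisely that its fully faithful right adjoint $i_\ast$ preserves filtered colimits, which is the remaining half of the $\omega$-continuity requirement.

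Having verified both functors are $\omega$-continuous, \cref{RetractOfCont} immediately yields that $\mathcal{Sh}(\mathcal Y)$ is a continuous $\infty$-category. Then \cref{ExpTh} concludes that $\mathcal Y$ is exponentiable.

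There is no real obstacle in this argument: the whole point is that $\omega$-accessibility is exactly what is needed so that the retraction data lives in the class of morphisms controlled by \cref{RetractOfCont}. The only subtlety worth flagging is to justify why "$\omega$-accessible localisation" is equivalent to "$i_\ast$ preserves filtered colimits''; this is the standard characterisation of $\omega$-accessible reflective localisations between presentable $\infty$-categories, and follows from the fact that $i_\ast$ is automatically continuous (being a right adjoint) together with the general criterion for accessibility of a right adjoint (see HTT~\cite{doi:10.1515/9781400830558}).
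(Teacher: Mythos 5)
Your proof is correct and follows essentially the same route as the paper: both arguments observe that $\omega$\=/accessibility of the localisation makes $i_\ast$ preserve filtered colimits, exhibit $\mathcal S\mathrm{h}(\mathcal Y)$ as a retract of the continuous $\infty$\=/category $\mathcal S\mathrm{h}(\mathcal X)$ by $\omega$\=/continuous functors, and conclude via \cref{RetractOfCont} and \cref{ExpTh}. The paper's proof is just a terser statement of exactly this argument, so nothing further is needed.
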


\begin{proof}
If the right adjoint to $i^\ast$ is $\omega$-continuous, then
$\mathcal S\mathrm{h}(\mathcal Y)$ becomes a retract by $\omega$\=/continuous
functors and we conclude with \cref{RetractOfCont}.
\end{proof}

\begin{corollary}
Let $\mathcal X \hookrightarrow \mathcal Y$ be a closed
subtopos of $\mathcal Y$. Suppose $\mathcal Y$ is exponentiable, then $\mathcal
X$ is also exponentiable.
\end{corollary}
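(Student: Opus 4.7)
The plan is to reduce to the immediately preceding proposition by checking its $\omega$\=/accessibility hypothesis for closed subtoposes. If $f : \mathcal X \hookrightarrow \mathcal Y$ is the closed inclusion, then by the glossary $f$ is a proper morphism of $\infty$\=/toposes (a closed subtopos is by definition a proper inclusion). Hence the direct image $f_\ast : \mathcal S\mathrm{h}(\mathcal X) \to \mathcal S\mathrm{h}(\mathcal Y)$ commutes with filtered colimits by \cite[Remark 7.3.1.5]{doi:10.1515/9781400830558}, which is exactly the HTT fact the paper already invoked in the previous corollary about cofiltered limits.

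Since $f^\ast$ is a reflective localisation (as $f$ is an inclusion) whose right adjoint $f_\ast$ is $\omega$\=/continuous, the localisation $f^\ast : \mathcal S\mathrm{h}(\mathcal Y) \to \mathcal S\mathrm{h}(\mathcal X)$ is $\omega$\=/accessible in the sense of the previous proposition. Applying that proposition (whose proof only requires precisely this $\omega$\=/continuity of the right adjoint in order to exhibit $\mathcal S\mathrm{h}(\mathcal X)$ as a retract by $\omega$\=/continuous functors of $\mathcal S\mathrm{h}(\mathcal Y)$, and then to conclude continuity via \cref{RetractOfCont}) yields the exponentiability of $\mathcal X$.

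There is no real obstacle: the corollary is essentially a one\=/line instantiation of the prior proposition, the only content being the identification of \emph{properness} with $\omega$\=/continuity of $f_\ast$, which is an external input from HTT. The proof is therefore short and consists entirely of unpacking definitions and citing the right adjoint condition.
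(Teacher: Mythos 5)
Your proposal is correct and follows exactly the paper's own argument: the paper's proof is precisely the one-line observation that a closed subtopos is a proper inclusion, so $f_\ast$ is $\omega$-continuous by HTT Remark 7.3.1.5, and the preceding proposition on $\omega$-accessible subtoposes applies. You have merely made explicit the reduction that the paper leaves implicit.
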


\begin{proof}
By remark 7.3.1.5 in HTT%
~\cite{doi:10.1515/9781400830558}, if $f$ is proper,
the functor $f_\ast$ is $\omega$-continuous.
\end{proof}

Finally combining the results we have on open and closed subtoposes, we get the
following proposition:

\begin{proposition}
Every locally closed subtopos of an exponentiable
$\infty$\=/topos is exponentiable.
\end{proposition}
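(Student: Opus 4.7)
The plan is to reduce the statement to the two previous corollaries, which already handle the open (\emph{i.e.}\ étale) case and the closed case separately, by unfolding the definition given in the glossary: a locally closed subtopos $i : \mathcal X \hookrightarrow \mathcal Y$ is by definition the intersection of an étale inclusion and a proper inclusion, so there exist subtoposes $\mathcal U \hookrightarrow \mathcal Y$ open and $\mathcal K \hookrightarrow \mathcal Y$ closed with $\mathcal X \simeq \mathcal U \cap \mathcal K$.

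First, I would argue that $\mathcal U$ is exponentiable by the proposition on étale morphisms: since $\mathcal Y$ is assumed exponentiable, the open subtopos $\mathcal U$ is exponentiable as well. Next I would observe that the inclusion $\mathcal X \hookrightarrow \mathcal U$ is again a closed inclusion: indeed, $\mathcal X = \mathcal U \cap \mathcal K$ is the pullback in $\mathcal T\mathrm{op}$ of the closed inclusion $\mathcal K \hookrightarrow \mathcal Y$ along the étale map $\mathcal U \to \mathcal Y$, and closed inclusions are stable under base change (because propers are, and inclusions are). Applying the corollary that closed subtoposes of exponentiable $\infty$\=/toposes are exponentiable to $\mathcal X \hookrightarrow \mathcal U$ then yields the desired conclusion.

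The only potentially delicate point in this reasoning is the claim that $\mathcal X \hookrightarrow \mathcal U$ is really closed, \emph{i.e.}\ that the intersection $\mathcal U \cap \mathcal K$ computed inside $\mathcal Y$ agrees with a closed subtopos of $\mathcal U$. This follows from the stability of proper maps under pullback (the stable Beck-Chevalley condition is itself stable under base change) combined with the fact that $i^\ast$ remains fully faithful after restriction, so the pulled-back inclusion is still a subtopos. Dually, one could instead apply the closed case first to obtain $\mathcal K$ exponentiable, and then the étale case to $\mathcal X \hookrightarrow \mathcal K$; both routes give the same result.
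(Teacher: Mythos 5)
Your proof is correct and matches the paper's (unstated) argument exactly: the paper offers no proof beyond the phrase ``combining the results we have on open and closed subtoposes'', and your decomposition --- $\mathcal U$ exponentiable by the \'etale case, then $\mathcal X \hookrightarrow \mathcal U$ closed by base-change stability of proper inclusions, then the closed case --- is precisely the intended combination, with the one genuinely delicate point (that the intersection is a closed subtopos of $\mathcal U$) correctly identified and justified.
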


\begin{proposition}
Let $f : \mathcal X \to \mathcal Y$ be a map between two
$\infty$\=/toposes. Suppose moreover that $f$ is cell-like or that $f$ is
essential with $\mathcal X$ having trivial $\mathcal Y$-shape. In such
circumstances, if $\mathcal X$ is exponentiable, then $\mathcal Y$ is also
exponentiable.
\end{proposition}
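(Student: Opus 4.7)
The plan is to invoke the characterisation \cref{ExpTh}: since $\mathcal X$ is exponentiable, $\mathcal S\mathrm{h}(\mathcal X)$ is continuous, and it suffices to show that $\mathcal S\mathrm{h}(\mathcal Y)$ is continuous as well. To this end, I will exhibit $\mathcal S\mathrm{h}(\mathcal Y)$ as a retract of $\mathcal S\mathrm{h}(\mathcal X)$ by $\omega$\=/continuous functors and then conclude with \cref{RetractOfCont}.

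In both hypotheses $\mathcal X$ has trivial $\mathcal Y$\=/shape, so that $f^\ast : \mathcal S\mathrm{h}(\mathcal Y) \to \mathcal S\mathrm{h}(\mathcal X)$ is fully faithful; being a left adjoint, it is also cocontinuous and a fortiori $\omega$\=/continuous. This $f^\ast$ will serve as the section of the retraction in both cases. The retraction itself is produced differently depending on which hypothesis is in force. If $f$ is cell-like, then $f$ is proper, and by remark 7.3.1.5 in HTT~\cite{doi:10.1515/9781400830558} the functor $f_\ast$ commutes with filtered colimits, hence is $\omega$\=/continuous; full faithfulness of $f^\ast$ moreover guarantees that the unit of $f^\ast \dashv f_\ast$ gives $f_\ast f^\ast \simeq \mathrm{Id}_{\mathcal S\mathrm{h}(\mathcal Y)}$. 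If instead $f$ is essential, the functor $f^\ast$ admits a left adjoint $f_!$, which is automatically cocontinuous and hence $\omega$\=/continuous; the counit of $f_! \dashv f^\ast$ yields $f_! f^\ast \simeq \mathrm{Id}_{\mathcal S\mathrm{h}(\mathcal Y)}$, again by full faithfulness of $f^\ast$.

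In either case we have displayed $\mathcal S\mathrm{h}(\mathcal Y)$ as a retract of the continuous $\infty$\=/category $\mathcal S\mathrm{h}(\mathcal X)$ by $\omega$\=/continuous functors. Applying \cref{RetractOfCont} shows that $\mathcal S\mathrm{h}(\mathcal Y)$ is continuous, and a final appeal to \cref{ExpTh} yields the exponentiability of $\mathcal Y$. No substantial obstacle is expected: the two hypotheses are in fact two parallel ways of equipping the fully faithful functor $f^\ast$ with an $\omega$\=/continuous retraction, and the rest is a direct application of previously established results.
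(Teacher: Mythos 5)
Your proof is correct and follows exactly the paper's argument: in both cases $f^\ast$ is a fully faithful $\omega$\=/continuous section admitting an $\omega$\=/continuous retraction ($f_\ast$ when $f$ is proper, via remark 7.3.1.5 in HTT; $f_!$ when $f$ is essential), so \cref{RetractOfCont} and \cref{ExpTh} conclude. The paper's own proof is a two-line compression of precisely this reasoning.
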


\begin{proof}
In both cases, $f^\ast$ is fully faithful with a (left or right)
adjoint that commutes with filtered colimits. Then apply
\cref{RetractOfCont}.
\end{proof}

\begin{remark}
By a result of \textsc{Scott}%
~\cite{doi:10.1007/BFb0073967}, every exponentiable locale has enough points.
This is no longer the case for $\infty$\=/toposes%
~\cite[Example 6.5.4.5]{doi:10.1515/9781400830558}.
\end{remark}

\section{Dualisability of the
\texorpdfstring{$\infty$\=/category}{infinity-category} of stable sheaves}

In this section we prove that when an $\infty$\=/topos is exponentiable, its
$\infty$\=/category of stable sheaves is dualisable.

\subsection{Stabilisation for presentable
\texorpdfstring{$\infty$\=/categories}{infinity-categories}}

We shall recall the definition of the
stabilisation functor and its properties. Our reference for this
topic is \emph{Higher Algebra}%
~\cite[Ch. 1 \& Sec.~4.8]{lurie2017higher}. We shall denote by $\mathrm{Sp}$
the $\infty$\=/category of spectra.

\begin{definition}
Let $\mathcal P\mathrm{res}_\mathrm{st}$ denote the full subcategory of
$\widehat{\mathcal C\mathrm{at}}_\mathrm{cc}$ whose objects are the
presentable and stable
$\infty$\=/categories.
\end{definition}

\begin{theorem}[{\cite[4.8.1.23 \& 4.8.2.18]{lurie2017higher}}]
The $\infty$\=/category
$\mathcal P\mathrm{res}_\mathrm{st}$ inherits a closed symmetric monoidal
structure from $\mathcal P\mathrm{res}$.
Furthermore, the inclusion functor $
\mathcal P\mathrm{res}_\mathrm{st} \hookrightarrow \mathcal P\mathrm{res}$
has a left adjoint, the
stabilisation functor:
$\mathcal C \mapsto \mathrm{Sp}(\mathcal C) \simeq
\mathcal C \otimes \mathrm{Sp}$
making $\mathcal P\mathrm{res}_{\mathrm{st}}$ a symmetric monoidal reflective
localisation of $\mathcal P\mathrm{res}$.
\end{theorem}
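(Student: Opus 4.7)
The plan is to derive both assertions from a general principle: presentable stable $\infty$\=/categories are precisely the modules over $\mathrm{Sp}$ inside the symmetric monoidal $\infty$\=/category $\mathcal P\mathrm{res}$. First I would verify that $\mathrm{Sp}$ is an idempotent commutative algebra object of $\mathcal P\mathrm{res}$, meaning that the multiplication $\mathrm{Sp} \otimes \mathrm{Sp} \to \mathrm{Sp}$ induced by the unit $\mathcal S \to \mathrm{Sp}$ is an equivalence. By \cref{TensorProductFormula}, the source identifies with ${[\mathrm{Sp}^\mathrm{op}, \mathrm{Sp}]}^\mathrm{c}$, which by Brown representability together with the fact that the sphere spectrum is a compact generator agrees with $\mathrm{Sp}$ itself.

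Next I would invoke the general machinery of idempotent algebras in a symmetric monoidal $\infty$\=/category: the full subcategory of $\mathcal P\mathrm{res}$ spanned by those $\mathcal C$ for which the unit $\mathcal C \simeq \mathcal C \otimes \mathcal S \to \mathcal C \otimes \mathrm{Sp}$ is an equivalence is closed under the tensor product, inherits a symmetric monoidal structure, and sits as a symmetric monoidal reflective localisation of $\mathcal P\mathrm{res}$ with reflector $\mathcal C \mapsto \mathcal C \otimes \mathrm{Sp}$. This step is purely formal and depends only on the axiomatic properties of idempotent commutative algebras.

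Finally I would identify this subcategory with $\mathcal P\mathrm{res}_\mathrm{st}$. One direction is easy: $\mathcal C \otimes \mathrm{Sp}$ is always a module over $\mathrm{Sp}$, and in particular is stable. The hard part is the converse, namely that a presentable stable $\mathcal C$ already satisfies $\mathcal C \otimes \mathrm{Sp} \simeq \mathcal C$. The natural strategy is to exhibit $\mathrm{Sp}$ as the free presentable stable $\infty$\=/category generated by a point, so that the identity of $\mathcal C$ together with its stable structure extends to a cocontinuous inverse $\mathcal C \otimes \mathrm{Sp} \to \mathcal C$ of the unit. Once this is established, the equivalence $\mathrm{Sp}(\mathcal C) \simeq \mathcal C \otimes \mathrm{Sp}$ follows from the universal property of the stabilisation, and closedness of the monoidal structure on $\mathcal P\mathrm{res}_\mathrm{st}$ is inherited from that on $\mathcal P\mathrm{res}$ together with the observation that ${[\mathcal C, \mathcal D]}_\mathrm{cc}$ is stable whenever $\mathcal D$ is.
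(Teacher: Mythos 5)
The paper does not prove this theorem: it is imported wholesale from \emph{Higher Algebra} (4.8.1.23 and 4.8.2.18) with no in-text argument, so there is nothing of the authors' to compare yours against. That said, your outline is essentially a reconstruction of Lurie's own proof: realise $\mathrm{Sp}$ as an idempotent commutative algebra of $\mathcal P\mathrm{res}$, apply the formal theory of idempotent algebras to obtain a symmetric monoidal reflective localisation onto its modules, and identify the modules with the presentable stable $\infty$-categories. Two points in your sketch deserve tightening. First, idempotence is the assertion that the map $\mathrm{Sp}\simeq\mathcal S\otimes\mathrm{Sp}\to\mathrm{Sp}\otimes\mathrm{Sp}$ induced by the unit $\mathcal S\to\mathrm{Sp}$ is an equivalence (the ``multiplication'' is then its inverse), and your appeal to Brown representability is the weakest link: even granting that every limit-preserving functor $\mathrm{Sp}^\mathrm{op}\to\mathrm{Sp}$ is a mapping spectrum, one must still check that the resulting identification ${[\mathrm{Sp}^\mathrm{op},\mathrm{Sp}]}^\mathrm{c}\simeq\mathrm{Sp}$ is compatible with the unit map, which is where the naive representability argument goes astray. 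The standard clean routes are either to restrict along $\mathrm{Sp}\simeq\mathrm{Ind}(\mathrm{Sp}^{\mathrm{fin}})$ and invoke Spanier--Whitehead self-duality of finite spectra, or (Lurie's order of operations) to first prove $\mathcal C\otimes\mathrm{Sp}\simeq\mathrm{Sp}(\mathcal C)$ from the presentation $\mathrm{Sp}(\mathcal C)=\varprojlim(\cdots\to\mathcal C_\ast\to\mathcal C_\ast)$ along $\Omega$ together with the fact that $\mathcal C\otimes-$ preserves this limit, after which idempotence is the triviality $\mathrm{Sp}(\mathrm{Sp})\simeq\mathrm{Sp}$. Second, in your converse step the cocontinuous functor $\mathcal C\otimes\mathrm{Sp}\to\mathcal C$ supplied by freeness of $\mathrm{Sp}$ retracts the unit essentially by construction, but you must still identify the other composite $\mathcal C\otimes\mathrm{Sp}\to\mathcal C\to\mathcal C\otimes\mathrm{Sp}$ with the identity, which uses that $\mathcal C\otimes\mathrm{Sp}$ is generated under colimits by pure tensors. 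Neither issue is fatal; with these repairs your plan is a correct proof of the cited result.
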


\subsection{Dualisability in \texorpdfstring{$\mathcal P\mathrm{res}$}{Pr}}

We start by recalling the notion of dualisable objects in a symmetric monoidal
$\infty$\=/category $(\mathcal C, \otimes)$%
~\cite[Ch. 4.6.1]{lurie2017higher}.

\begin{definition}
An object $X$ of $\mathcal C$ is dualisable if there exists
another object $X^\vee \in C$ with two maps
$\eta : 1_{\mathcal C} \to X \otimes X^\vee$
and $\varepsilon : X^\vee \otimes X \to 1_{\mathcal C}$.
where $1_{\mathcal C}$ is the unit of $\mathcal C$, such that the composite
maps:
\[
	\begin{tikzcd}
		X \arrow[rr, " \eta\thinspace \otimes\thinspace \mathrm{Id}_X"]
		&& X \otimes X^\vee \otimes X
		\arrow[rr, "\mathrm{Id}_X \thinspace \otimes\thinspace \varepsilon"]
		&& X\thinspace;
		\\
		X^\vee
		\arrow[rr, "\mathrm{Id}_{X^\vee}\thinspace \otimes\thinspace \eta"] 
		&& X^\vee \otimes X \otimes X^\vee
		\arrow[rr, "\varepsilon\thinspace \otimes\thinspace \mathrm{Id}_{X^\vee}"]
		&& X^\vee\thinspace,
	\end{tikzcd}
\]
are homotopic to the identities on $X$ and $X^\vee$ respectively.
\end{definition}

\begin{remark}
In the case where $\mathcal C$ is a closed symmetric
monoidal $\infty$\=/category, a dualisable object $X$ has its dual given
by $X^\vee = [X, 1]$ where $[-,-]$ is the internal hom associated
to the monoidal structure and $1$ is the monoidal unit.
\end{remark}

\begin{lemma}%
\label{Rectractdualisable}
In a closed symmetric monoidal $\infty$\=/category,
any retract of a dualisable object is dualisable.
\end{lemma}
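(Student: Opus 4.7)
The plan is to reduce dualisability to an equivalence condition via the internal hom, which behaves well under retracts. Recall the standard characterization: in a closed symmetric monoidal $\infty$\=/category $\mathcal C$, an object $X$ is dualisable if and only if the canonical comparison map
\[
	\mu_{X,Z} : [X, 1] \otimes Z \longrightarrow [X, Z]
\]
is an equivalence for every $Z \in \mathcal C$; this follows from uniqueness of right adjoints to $(-) \otimes X$.

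With this in hand, let $X$ be dualisable with $X^\vee = [X, 1]$, and let $Y$ be a retract of $X$ in the $\infty$\=/categorical sense: a section $s : Y \to X$, a retraction $r : X \to Y$, and a homotopy $r s \simeq \mathrm{Id}_Y$. First I would apply the contravariant functor $[-, 1]$ to this retract datum to obtain $s^\vee : X^\vee \to Y^\vee$ and $r^\vee : Y^\vee \to X^\vee$ with $s^\vee r^\vee \simeq \mathrm{Id}_{Y^\vee}$, where $Y^\vee := [Y, 1]$. Next, for any object $Z$, the functors $(-) \otimes Z$ and $[-, Z]$ (the latter again contravariant) both preserve retracts, so $Y^\vee \otimes Z$ is a retract of $X^\vee \otimes Z$ and $[Y, Z]$ is a retract of $[X, Z]$. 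By naturality of $\mu_{-, Z}$ in its first variable, these two retract structures assemble into a commutative diagram that exhibits $\mu_{Y, Z}$ as a retract of $\mu_{X, Z}$ inside the arrow $\infty$\=/category $\mathcal C^{\Delta^1}$.

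The proof concludes by invoking the general fact that equivalences in any $\infty$\=/category are closed under retracts; this can be checked after passing to the homotopy category, where retracts of isomorphisms are tautologically isomorphisms. Since $\mu_{X, Z}$ is an equivalence by dualisability of $X$, so is $\mu_{Y, Z}$. As $Z$ is arbitrary, $Y$ is dualisable with dual $Y^\vee = [Y, 1]$.

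The main subtlety is ensuring that the retract datum at the level of objects lifts to a coherent retract datum at the level of the comparison arrows; this is precisely what naturality of $\mu$ in its first variable provides, with no additional homotopy-coherence input beyond the original retract datum. An alternative, more pedestrian route would be to construct $\eta_Y$ and $\varepsilon_Y$ directly from $\eta_X$ and $\varepsilon_X$ by pre- and post-composing with $s$, $r$, $s^\vee$, $r^\vee$, and to verify the triangle identities by applying naturality of $\varepsilon_X$ to collapse the intermediate idempotent $s^\vee r^\vee$; the functorial argument above packages this computation cleanly.
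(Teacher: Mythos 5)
Your proof is correct, but it takes a genuinely different route from the one in the paper. The paper argues directly with the duality data: it sets $Y^\vee = [Y,1]$, defines $\eta_Y = (r \otimes s^\vee)\eta_X$ and $\varepsilon_Y = \varepsilon_X(r^\vee \otimes s)$, and observes that the resulting zig-zag composite $Y \to Y$ is a retract of $\mathrm{Id}_X$ (via $s$ and $r$), hence homotopic to the identity --- i.e.\ it is essentially the ``pedestrian'' alternative you sketch in your last paragraph. Your main argument instead reduces dualisability to the statement that the canonical comparison $\mu_{X,Z} : [X,1]\otimes Z \to [X,Z]$ is an equivalence for all $Z$, and then transports this property along the retract by noting that $\mu_{Y,Z}$ is a retract of $\mu_{X,Z}$ in the arrow $\infty$\=/category and that equivalences are closed under retracts (checked in the homotopy category). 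The trade-off: your route needs the closedness of the monoidal structure in an essential way (to even state the criterion) and outsources all homotopy coherence to the naturality of $\mu$, which makes the verification of the triangle identities disappear entirely; the paper's route is more elementary and exhibits the duality data for $Y$ explicitly, which is occasionally useful downstream, at the cost of a slightly delicate check that the zig-zag composites are homotopic to identities rather than merely equivalences. Both are complete; just make sure, if you keep the functorial version, to record that the dualisability criterion via $\mu_{X,Z}$ requires naturality in $Z$ as well, since that is what produces the unit and counit from the internal-hom adjunction.
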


\begin{proof}
Let $r : X \to Y$ be a retraction with $X$ a dualisable object
and let $s : Y \to X$ be a section. Set $Y^\vee = [Y, 1_{\mathcal C}]$ an let's
show that $Y^\vee$ has the right property. Because $ r : X \to Y$ is a
retraction, the same is true for $s^\vee : X^\vee \to Y^\vee$.
We are then supplied with maps $\eta_Y = (r \otimes s^\vee)\eta_X$ and
$\varepsilon_Y = \varepsilon_X(r^\vee \otimes s)$.
The composition $(\mathrm{Id}_Y \otimes \varepsilon_Y)\circ (\eta_Y \otimes
\mathrm{Id}_X): Y \to Y$ is then a retract of $\mathrm{Id}_X$, hence homotopic
to the identity itself.  The same is true for the other composition.
\end{proof}

\begin{theorem}%
\label{thm:dualisabilite_dans_les_categories_presentables}
The $\infty$\=/categories of the form
$\mathcal P(D)$ with $D$ a small $\infty$\=/category and
their retracts are dualisable objects of
$\widehat{\mathcal C\mathrm{at}}_\mathrm{cc}$. Moreover, they are exactly
the dualisable objects of $\mathcal P\mathrm{res}$.
\end{theorem}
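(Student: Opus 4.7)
The plan proceeds in three steps. First, I verify that $\mathcal P(D)$ is dualisable in $\widehat{\mathcal C\mathrm{at}}_{\mathrm{cc}}$ with dual $\mathcal P(D^{\mathrm{op}})$. The counit is the coend functor $\varepsilon = \int_D : \mathcal P(D^{\mathrm{op}}) \otimes \mathcal P(D) \simeq \mathcal P(D^{\mathrm{op}} \times D) \to \mathcal S$ from \cref{PropdefCoend}, and the unit $\eta : \mathcal S \to \mathcal P(D) \otimes \mathcal P(D^{\mathrm{op}}) \simeq \mathcal P(D \times D^{\mathrm{op}})$ is the cocontinuous extension of the functor $\ast \mapsto \mathrm{Map}_D$. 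The two triangle identities are statements about cocontinuous endofunctors of a presheaf $\infty$\=/category, so it suffices to check them on representables. Evaluated on $y(c)$, the composite $(\mathrm{Id}\otimes \varepsilon)\circ(\eta \otimes \mathrm{Id})$ unpacks to $a \mapsto \int_{d \in D} \mathrm{Map}_D(a,d) \times \mathrm{Map}_D(d,c)$, which by the coend Yoneda formula of \cref{coYoneda} equals $\mathrm{Map}_D(a,c) = y(c)(a)$; the other triangle is symmetric.

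That retracts of $\mathcal P(D)$'s are dualisable is then immediate from \cref{Rectractdualisable}, which says dualisability transfers along retracts in any closed symmetric monoidal $\infty$\=/category. Combined with the previous step, this settles the first assertion.

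For the converse, let $\mathcal C$ be a dualisable object of $\mathcal P\mathrm{res}$ with dual $\mathcal C^{\vee}$, unit $\eta$ and counit $\varepsilon$. Since $\mathcal C$ is presentable, it admits a small dense subcategory $D \subset \mathcal C$, and the inclusion extends to a cocontinuous $L : \mathcal P(D) \to \mathcal C$ whose fully faithful right adjoint $R$ satisfies $L R \simeq \mathrm{Id}_{\mathcal C}$. The missing piece is a cocontinuous section of $L$. I would construct it from the duality data: under the equivalence $\mathcal C \otimes \mathcal C^{\vee} \simeq {[\mathcal C^{\mathrm{op}}, \mathcal C^{\vee}]}^{\mathrm{c}}$ of \cref{TensorProductFormula}, the unit $\eta$ corresponds to a continuous kernel $k : \mathcal C^{\mathrm{op}} \to \mathcal C^{\vee}$, and setting $s(x)(d) := \varepsilon(k(d), x)$ for $x \in \mathcal C$ and $d \in D$ defines a functor $s : \mathcal C \to \mathcal P(D)$ cocontinuous in $x$ by cocontinuity of $\varepsilon$ in its second variable.

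The principal obstacle is verifying $L \circ s \simeq \mathrm{Id}_{\mathcal C}$. The plan is to combine the triangle identity for the duality (an identity in $\mathcal C$, encoded as a coend over the full $\infty$\=/category $\mathcal C$) with the density-based expansion $x \simeq \int_{d \in D} \mathrm{Map}_{\mathcal C}(d, x) \otimes d$ on the small subcategory $D$; the delicate point is to ensure that restricting the integration variable from $\mathcal C$ to $D$ does not destroy the duality identity, which should follow from the density of $D$ together with continuity of the kernel $k$.
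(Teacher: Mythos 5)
Your first assertion is proved exactly as in the paper: the unit is the cocontinuous extension of $\ast \mapsto \mathrm{Map}_D$, the counit is the coend functor $\int_D$, the triangle identities reduce to the coend Yoneda formula of \cref{coYoneda}, and retracts are handled by \cref{Rectractdualisable}. For the converse you take a genuinely different route. The paper never constructs a section of $L$ directly: it dualises $L$ to obtain $L^\vee : \mathcal C^\vee \to \mathcal P(D^{\mathrm{op}})$, observes that precomposition with a reflective localisation is fully faithful, cocontinuous, and admits a cocontinuous left adjoint (left Kan extension along $L$), so that $\mathcal C^\vee$ is a retract of $\mathcal P(D^{\mathrm{op}})$, and then transports the conclusion back through $\mathcal C \simeq {(\mathcal C^\vee)}^\vee$. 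Your route instead manufactures the section $s(x)(d) = \varepsilon(k(d),x)$ from the duality data, and the step you leave as a plan does go through, though the relevant input is not the continuity of $k$ but rather the fact that $\eta(\ast)$ is a colimit of pure tensors $c_\alpha \otimes \phi_\alpha$ together with cocontinuity of $\varepsilon$ in its \emph{first} variable: these give
\[
	L(s(x)) \simeq \underset{\alpha}{\varinjlim}\ \varepsilon(\phi_\alpha, x)
	\otimes \Bigl(\int_{d \in D} \mathrm{Map}_{\mathcal C}(d, c_\alpha)
	\otimes d\Bigr)
	\simeq \underset{\alpha}{\varinjlim}\ \varepsilon(\phi_\alpha, x)
	\otimes c_\alpha \simeq x \thinspace,
\]
where the middle equivalence is density of $D$ (i.e.\ $LR \simeq \mathrm{Id}_{\mathcal C}$ for $R$ the restricted Yoneda functor, via \cref{CoendAsColim}) and the last is the triangle identity; so restricting the integration variable from $\mathcal C$ to $D$ loses nothing. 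Comparing the two: your argument is explicit and identifies the section concretely, at the cost of a coend manipulation that must be carried out with care; the paper's is shorter and purely formal, but relies on knowing that dualisation is a contravariant involution on dualisable objects and that ${\mathcal P(D)}^\vee \simeq \mathcal P(D^{\mathrm{op}})$ — facts your computation re-derives implicitly. Either way the statement is established.
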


\begin{proof}
Let $D$ be a small $\infty$\=/category, then if $\mathcal P(D)$ has
a dual, it has to be $\mathcal P(D^\mathrm{op})$, so let's introduce $\mathcal
P(D^\mathrm{op} \times D)$ the $\infty$\=/category of bimodules on $D$; we have
$\mathcal P(D^\mathrm{op}) \otimes \mathcal P(D)
\simeq \mathcal P(D^\mathrm{op} \times D)$.  

Then let $\eta : \mathcal S \to \mathcal P(D^\mathrm{op} \times D)$ be the
cocontinuous functor
sending the point $\ast \in \mathcal S$ to the
map-bimodule ${[-,-]}_D$.  And finally, let
$\varepsilon : \mathcal P(D^\mathrm{op} \times D) \to \mathcal S$
be the coend functor.

The composition $(\mathrm{Id}\otimes \varepsilon)(\eta \otimes \mathrm{Id})$
is given by the formula for presheaves:
$\int_{b\thinspace \in\thinspace D^\mathrm{op}} F(b) \times [a,b] = F(a)$
for a functor $F \in \mathcal P(D)$. 
The sister formula comes from
$\int_{a\thinspace \in\thinspace D}  [a,b] \times F(a) = F(b)$
for a functor $F \in \mathcal P(D^\mathrm{op})$.
Because a retract of a dualisable object is dualisable by
\cref{Rectractdualisable}, we are done for the first half.

Let $\mathcal C$ be a dualisable presentable $\infty$\=/category,
$D \subset \mathcal C$ be a small and dense subcategory and let $L :
\mathcal P(D) \to \mathcal C$ be the associated reflective localisation functor.
The dual map $L^\vee : \mathcal C^\vee
\simeq {[\mathcal C , \mathcal S]}_\mathrm{cc}
\to \mathcal P(D^\mathrm{op})$ is fully faithful because $L$ is a reflective
localisation functor, it is also cocontinuous. It has a left adjoint which is
the left Kan extension along $L$. As a consequence $\mathcal C^\vee$ is a
retract by cocontinuous functors of $\mathcal P(D^\mathrm{op})$.
Finally because $\mathcal C \simeq {(\mathcal C^\vee)}^\vee$ we deduce that
$\mathcal C$ is a retract of $\mathcal P(D)$.
\end{proof}

\subsection{Dualisability of stable sheaves}

The $\infty$-logos of an exponentiable $\infty$\=/topos is not
dualisable in general in $\mathcal P\mathrm{res}$,
as in general
an $\infty$\=/category of ind-objects is not dualisable. This is no longer
the case in $\mathcal P\mathrm{res}_\mathrm{st}$.

\begin{theorem}%
\label{thm:objets_dualisables_dans_pr_st}
The dualisable objects of $\mathcal P\mathrm{res}_\mathrm{st}$ are the
$\infty$\=/categories of the form $\mathcal P(D) \otimes \mathrm{Sp}$ and
their retracts.
\end{theorem}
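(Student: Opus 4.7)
The plan is to argue the two inclusions separately, with the reverse one mirroring the proof of \cref{thm:dualisabilite_dans_les_categories_presentables} transposed to the stable setting.

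For the forward direction, I first note that the stabilisation functor $- \otimes \mathrm{Sp} : \mathcal P\mathrm{res} \to \mathcal P\mathrm{res}_\mathrm{st}$ is symmetric monoidal (as recalled in the theorem above) and therefore preserves dualisable objects. Combining this with \cref{thm:dualisabilite_dans_les_categories_presentables}, which provides the dualisability of $\mathcal P(D)$ in $\mathcal P\mathrm{res}$ with dual $\mathcal P(D^\mathrm{op})$, shows that $\mathcal P(D) \otimes \mathrm{Sp}$ is dualisable in $\mathcal P\mathrm{res}_\mathrm{st}$ with dual $\mathcal P(D^\mathrm{op}) \otimes \mathrm{Sp}$; \cref{Rectractdualisable} then handles the case of retracts.

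For the converse, let $\mathcal C$ be a dualisable object of $\mathcal P\mathrm{res}_\mathrm{st}$. Since $\mathcal C$ is in particular presentable, I can choose a small dense subcategory $D \subset \mathcal C$ and extend the inclusion to a reflective localisation $L_0 : \mathcal P(D) \to \mathcal C$ in $\mathcal P\mathrm{res}$. Tensoring with $\mathrm{Sp}$ and invoking the stability of $\mathcal C$ yields
\[
	L = L_0 \otimes \mathrm{Sp} : \mathcal P(D) \otimes \mathrm{Sp}
	\longrightarrow \mathcal C \otimes \mathrm{Sp} \simeq \mathcal C
	\thinspace,
\]
which remains a reflective localisation in $\mathcal P\mathrm{res}_\mathrm{st}$ by \cref{TensOfLoc}. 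Now I would mimic the dualisation argument of \cref{thm:dualisabilite_dans_les_categories_presentables}: the dual map $L^\vee : \mathcal C^\vee \to \mathcal P(D^\mathrm{op}) \otimes \mathrm{Sp}$ identifies with precomposition by $L$ between $\infty$\=/categories of cocontinuous $\mathrm{Sp}$\=/valued functors. Because $L$ is a reflective localisation, precomposition by $L$ is fully faithful (its image consists of those cocontinuous functors inverting the localising class) and admits a cocontinuous left adjoint given by left Kan extension along $L$. Hence $\mathcal C^\vee$ is a retract in $\mathcal P\mathrm{res}_\mathrm{st}$ of $\mathcal P(D^\mathrm{op}) \otimes \mathrm{Sp}$, and dualising a second time using $\mathcal C \simeq (\mathcal C^\vee)^\vee$ exhibits $\mathcal C$ itself as a retract of $\mathcal P(D) \otimes \mathrm{Sp}$.

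The main obstacle is verificational rather than conceptual: I need to confirm that the dualisation step really transfers cleanly to the stable setting. In particular, the identification $(\mathcal P(D) \otimes \mathrm{Sp})^\vee \simeq \mathcal P(D^\mathrm{op}) \otimes \mathrm{Sp}$ should be read off from the forward direction together with the self-duality of $\mathrm{Sp}$, and the adjunction between precomposition along $L$ and left Kan extension along $L$ must be shown to live inside $\mathcal P\mathrm{res}_\mathrm{st}$ rather than just inside $\mathcal P\mathrm{res}$. Once this bookkeeping is done, the argument is a direct transplant of the proof of \cref{thm:dualisabilite_dans_les_categories_presentables}.
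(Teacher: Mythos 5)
Your proposal is correct and follows essentially the same route as the paper: the forward direction via monoidality of stabilisation plus \cref{Rectractdualisable}, and the converse by tensoring a reflective localisation $\mathcal P(D) \to \mathcal C$ with $\mathrm{Sp}$ and repeating the dualisation argument of \cref{thm:dualisabilite_dans_les_categories_presentables}. The paper compresses the converse into ``the same arguments as before''; you simply spell out those arguments (and flag the bookkeeping they require), which is a faithful expansion rather than a different proof.
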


\begin{proof}
Since the stabilisation functor
$(\mathrm{Sp} \otimes -) : \mathcal P\mathrm{res}
\to \mathcal P\mathrm{res}_\mathrm{st}$
is symmetric monoidal, it sends
dualisable objects to dualisable objects. Hence we know that the
$\infty$\=/categories of the form $\mathcal P(D) \otimes \mathrm{Sp}$ and
their retracts are dualisable by
\cref{thm:dualisabilite_dans_les_categories_presentables}.

Let $\mathcal C$ be a dualisable presentable stable $\infty$\=/category,
then there exists a small $\infty$\=/category $D$ and a reflective localisation
$\mathcal P(D) \to \mathcal C$ which induces a reflective localisation
$\mathcal P(D) \otimes \mathrm{Sp} \to \mathcal C \otimes \mathrm{Sp}
\simeq \mathcal C$. We end the proof with the same arguments as in
\cref{thm:dualisabilite_dans_les_categories_presentables}.
\end{proof}

\begin{lemma}%
\label{thm:ind_st_est_retract_de_prefaiceaux}
Let $D$ be a small $\infty$\=/category
with finite colimits. Then $\mathrm{Ind}(D) \otimes \mathrm{Sp}$ is a
retract of $\mathcal P(D) \otimes \mathrm{Sp}$ in
$\mathcal P\mathrm{res}_\mathrm{st}$.
\end{lemma}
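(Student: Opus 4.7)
The plan is to realise both tensor products as functor $\infty$-categories using Corollary~\ref{thm:tensoriser_avec_cclex_preserve_les_lex} and then produce a cocontinuous section of the canonical reflector by hand. Since $D$ is small with finite colimits and $\mathrm{Sp}$ is bicomplete, that corollary identifies
\[
	\mathrm{Ind}(D) \otimes \mathrm{Sp}
	\simeq {[D^\mathrm{op}, \mathrm{Sp}]}^\mathrm{lex}
	\qquad \text{and} \qquad
	\mathcal P(D) \otimes \mathrm{Sp}
	\simeq [D^\mathrm{op}, \mathrm{Sp}]\thinspace,
\]
and under these identifications the localisation $L \otimes \mathrm{Id}_\mathrm{Sp}$ corresponds to the left-exact reflection, whose fully faithful right adjoint is the inclusion $j : {[D^\mathrm{op}, \mathrm{Sp}]}^\mathrm{lex} \hookrightarrow [D^\mathrm{op}, \mathrm{Sp}]$.

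The key step is to check that $j$ is itself \emph{cocontinuous}. Since colimits in $[D^\mathrm{op}, \mathrm{Sp}]$ are computed pointwise, it suffices to verify that pointwise small colimits of left-exact functors valued in $\mathrm{Sp}$ remain left-exact, or equivalently that finite limits in $\mathrm{Sp}$ commute with arbitrary small colimits. This is where stability enters: in $\mathrm{Sp}$ finite limits and finite colimits coincide, and every small colimit can be built from finite colimits and filtered colimits; since filtered colimits always commute with finite limits, and finite colimits here \emph{are} finite limits, the desired commutation follows.

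With $j$ both fully faithful and cocontinuous, both $L \otimes \mathrm{Id}_\mathrm{Sp}$ and $j$ are morphisms in $\mathcal P\mathrm{res}_\mathrm{st}$, and the counit $(L \otimes \mathrm{Id}_\mathrm{Sp}) \circ j \simeq \mathrm{Id}$ of the reflective adjunction exhibits $\mathrm{Ind}(D) \otimes \mathrm{Sp}$ as a retract of $\mathcal P(D) \otimes \mathrm{Sp}$ in $\mathcal P\mathrm{res}_\mathrm{st}$.

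The main obstacle is really verifying the cocontinuity of $j$, and this step genuinely uses stability of $\mathrm{Sp}$: before tensoring, the analogous inclusion $\mathrm{Ind}(D) \hookrightarrow \mathcal P(D)$ fails to be cocontinuous because in $\mathcal S$ pointwise finite colimits of left-exact functors need not be left-exact, so no such retract exists in $\widehat{\mathcal C\mathrm{at}}_\mathrm{cc}$ at the unstable level.
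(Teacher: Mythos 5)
Your proof is correct and follows essentially the same route as the paper: identify $\mathrm{Ind}(D)\otimes\mathrm{Sp}$ with ${[D^\mathrm{op},\mathrm{Sp}]}^{\mathrm{lex}}$ and $\mathcal P(D)\otimes\mathrm{Sp}$ with $[D^\mathrm{op},\mathrm{Sp}]$, then observe that stability of $\mathrm{Sp}$ makes the inclusion of left-exact functors cocontinuous, so the reflective adjunction becomes a retraction in $\mathcal P\mathrm{res}_\mathrm{st}$. The only cosmetic difference is that you spell out the commutation of finite limits with all colimits in $\mathrm{Sp}$, which the paper leaves implicit, and the identification of the tensor products is really \cref{UberTensorProductFormula} rather than \cref{thm:tensoriser_avec_cclex_preserve_les_lex}.
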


\begin{proof}
Since $D$ has small colimits, then by
\cref{TensorProductFormula} the $\infty$\=/category $\mathrm{Ind}(D) \otimes
\mathrm{Sp}$ is equivalent to the $\infty$\=/category of left exact functors
${[D^\mathrm{op}, \mathrm{Sp}]}^\mathrm{lex}$ and
$\mathcal P(D) \otimes \mathrm{Sp}$ is equivalent to
$[D^\mathrm{op}, \mathrm{Sp}]$.

Because $\mathrm{Sp}$ is stable and colimits in functor $\infty$\=/categories
are computed pointwise, the embedding 
${[D^\mathrm{op}, \mathrm{Sp}]}^\mathrm{lex}
\hookrightarrow [D^\mathrm{op}, \mathrm{Sp}]$
commutes with all limits and colimits.
It then has a left adjoint such that $\mathrm{Ind}(D) \otimes \mathrm{Sp}$ is a
retract of $\mathcal P(D) \otimes \mathrm{Sp}$ by cocontinuous functors.
\end{proof}

\begin{theorem}%
\label{DualisabilityOfStableSheaves}
Let $\mathcal X$ be an exponentiable $\infty$\=/topos, then
$\mathcal S\mathrm{h}(\mathcal X) \otimes
\mathrm{Sp}$, the $\infty$\=/category of stable sheaves on $\mathcal X$,  is a
dualisable object of $\mathcal P \mathrm{res}_{\mathrm{St}}$.
\end{theorem}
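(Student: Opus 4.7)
The plan is to realise $\mathcal S\mathrm{h}(\mathcal X) \otimes \mathrm{Sp}$ as a retract of some $\mathcal P(D) \otimes \mathrm{Sp}$ in $\mathcal P\mathrm{res}_\mathrm{st}$, so that \cref{thm:objets_dualisables_dans_pr_st} gives dualisability for free. Everything needed to assemble this retract has already been proved in the paper; the argument is essentially a chain of three retractions stacked up.

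First, since $\mathcal X$ is exponentiable, \cref{Exp=>Cont} says that $\mathcal S\mathrm{h}(\mathcal X)$ is a continuous presentable $\infty$-logos, and \cref{thm:presentation_standard} then produces a standard presentation
\[
\begin{tikzcd}
\mathrm{Ind}(D) \arrow[r, "\varepsilon" description] & \mathcal S\mathrm{h}(\mathcal X) \arrow[l, "\beta", shift right=2, swap]
\end{tikzcd}
\]
with $D$ a small subcategory of $\mathcal S\mathrm{h}(\mathcal X)$ closed under finite limits and finite colimits, $\beta$ fully faithful and $\varepsilon \beta \simeq \mathrm{Id}$. In particular $\mathcal S\mathrm{h}(\mathcal X)$ is a retract of $\mathrm{Ind}(D)$ by cocontinuous functors in $\mathcal P\mathrm{res}$.

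Second, I apply the symmetric monoidal functor $(-\otimes\mathrm{Sp}): \mathcal P\mathrm{res} \to \mathcal P\mathrm{res}_\mathrm{st}$ to this retract: tensoring $\beta$ and $\varepsilon$ with $\mathrm{Id}_{\mathrm{Sp}}$ gives cocontinuous functors
\[
\mathcal S\mathrm{h}(\mathcal X) \otimes \mathrm{Sp}
\xrightarrow{\beta \otimes \mathrm{Id}}
\mathrm{Ind}(D) \otimes \mathrm{Sp}
\xrightarrow{\varepsilon \otimes \mathrm{Id}}
\mathcal S\mathrm{h}(\mathcal X) \otimes \mathrm{Sp}
\]
whose composite is the identity (by functoriality of $-\otimes\mathrm{Sp}$), so $\mathcal S\mathrm{h}(\mathcal X) \otimes \mathrm{Sp}$ is a retract of $\mathrm{Ind}(D) \otimes \mathrm{Sp}$ inside $\mathcal P\mathrm{res}_\mathrm{st}$. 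Because $D$ was chosen to have finite colimits, \cref{thm:ind_st_est_retract_de_prefaiceaux} applies and exhibits $\mathrm{Ind}(D) \otimes \mathrm{Sp}$ itself as a retract of $\mathcal P(D) \otimes \mathrm{Sp}$ in $\mathcal P\mathrm{res}_\mathrm{st}$. Composing the two retractions, $\mathcal S\mathrm{h}(\mathcal X) \otimes \mathrm{Sp}$ is a retract of $\mathcal P(D) \otimes \mathrm{Sp}$.

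Finally, \cref{thm:objets_dualisables_dans_pr_st} identifies the dualisable objects of $\mathcal P\mathrm{res}_\mathrm{st}$ precisely as retracts of $\mathcal P(D) \otimes \mathrm{Sp}$, so we conclude that $\mathcal S\mathrm{h}(\mathcal X) \otimes \mathrm{Sp}$ is dualisable. There is no serious obstacle: the only place where something could go wrong is checking that the standard presentation's $D$ really has the finite colimits hypothesis required by \cref{thm:ind_st_est_retract_de_prefaiceaux}, but this is built into the definition of standard presentation, so the proof reduces to citing the three results in the right order.
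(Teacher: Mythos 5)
Your proof is correct and follows essentially the same route as the paper: tensor the standard presentation by $\mathrm{Sp}$ to exhibit $\mathcal S\mathrm{h}(\mathcal X)\otimes\mathrm{Sp}$ as a retract of $\mathrm{Ind}(D)\otimes\mathrm{Sp}$, then chain with \cref{thm:ind_st_est_retract_de_prefaiceaux} and conclude by \cref{thm:objets_dualisables_dans_pr_st}. Your explicit check that $D$ has finite colimits (built into the definition of a standard presentation) is a detail the paper leaves implicit, but nothing differs in substance.
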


\begin{proof}
After tensoring by $\mathrm{Sp}$ a standard presentation of
$\mathcal S\mathrm{h}(\mathcal X)$,
we get a retraction in $\mathcal P\mathrm{res}_{\mathrm{St}}$:
\[
	\begin{tikzcd}
		\mathrm{Ind}(D) \otimes \mathrm{Sp}
		\arrow[r, "{\varepsilon'}", shift right, swap] 
		& \mathcal S\mathrm{h}(\mathcal X) \otimes \mathrm{Sp}\ .
		\arrow[l, "{\beta'}", shift right, swap]
	\end{tikzcd}
\]
We conclude using \cref{thm:ind_st_est_retract_de_prefaiceaux} and
\cref{thm:objets_dualisables_dans_pr_st}.
\end{proof}

This theorem can be compared to a result of \textsc{Niefield} and
\textsc{Wood}%
~\cite{niefield2017coexponentiability}:

\begin{theorem*}
An $R$-ring $A$ is coexponentiable if and only if $A$ is projective and
finitely generated as an $R$-module.
\end{theorem*}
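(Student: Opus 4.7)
I would approach this theorem of \textsc{Niefield} and \textsc{Wood} in a spirit parallel to the main theorem \cref{ExpTh} of this paper, translating coexponentiability in the category $R\text{-Alg}$ of commutative $R$-algebras into a question about an adjoint functor and reducing it to a classical module-theoretic characterisation of finitely generated projective modules. Since the coproduct in $R\text{-Alg}$ is the tensor product $\otimes_R$, the $R$-ring $A$ is coexponentiable exactly when the endofunctor $A \otimes_R - : R\text{-Alg} \to R\text{-Alg}$ admits a left adjoint. Equivalently, this functor must then be a right adjoint, hence preserve all small limits; conversely, since $R\text{-Alg}$ is locally presentable, the adjoint functor theorem produces a left adjoint to any $\omega$-accessible limit-preserving endofunctor.

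For the sufficiency direction, assume $A$ is finitely generated projective as an $R$-module. Then $A$ is a dualisable object of the symmetric monoidal category $R\text{-Mod}$, with dual $A^\vee = \mathrm{Hom}_R(A, R)$, so that $A \otimes_R - : R\text{-Mod} \to R\text{-Mod}$ is naturally equivalent to $\mathrm{Hom}_R(A^\vee, -)$, which is a right adjoint and in particular preserves all small limits. Since the forgetful functor $R\text{-Alg} \to R\text{-Mod}$ is right adjoint to $\mathrm{Sym}_R$ and creates limits, the functor $A \otimes_R -$ preserves all small limits also as an endofunctor of $R\text{-Alg}$; combined with its $\omega$-accessibility (since a finitely generated projective module over a commutative ring is finitely presented), the adjoint functor theorem supplies the desired left adjoint.

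For the necessity direction, assume $A$ is coexponentiable, so that $A \otimes_R -$ preserves small limits in $R\text{-Alg}$. To transfer this property to $R\text{-Mod}$, I would exploit the square-zero extension functor $M \mapsto R \oplus M$ from $R\text{-Mod}$ into the slice category $R\text{-Alg}_{/R}$; this functor is right adjoint to $(B \to R) \mapsto I_B/I_B^2$, where $I_B$ is the augmentation ideal, and so preserves slice limits. Slice limits in $R\text{-Alg}_{/R}$ are wide pullbacks in the ambient $R\text{-Alg}$, hence are preserved by $A \otimes_R -$, and the identification $A \otimes_R (R \oplus M) \simeq A \oplus (A \otimes_R M)$ then shows that $A \otimes_R - : R\text{-Mod} \to R\text{-Mod}$ preserves arbitrary small products. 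By the classical characterisation of finitely generated projective modules as those for which tensoring preserves products, $A$ must be finitely generated and projective over $R$.

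The main technical obstacle sits in the necessity direction, where one must carefully match the slice limits in $R\text{-Alg}_{/R}$ with the corresponding wide pullbacks in $R\text{-Alg}$, and verify that base change by $A$ commutes with the square-zero embedding in a manner that decouples the product-preservation identity into a statement about $R$-modules alone. Once these bookkeeping compatibilities are established, the proof reduces cleanly to the module-theoretic criterion, and the analogy with the infinity-categorical theorem \cref{ExpTh} becomes fully transparent: continuity of the category of sheaves plays here the role of dualisability of the module structure.
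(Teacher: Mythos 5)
The paper does not actually prove this statement: it is quoted verbatim from Niefield and Wood as a point of comparison with \cref{DualisabilityOfStableSheaves}, with only a citation attached, so there is no in-paper argument to measure yours against. On its own terms, your overall strategy is reasonable and matches the spirit of the arguments in the literature: unwinding coexponentiability to the condition that $A \otimes_R -$ preserve small limits of commutative $R$-algebras (the accessibility needed for the adjoint functor theorem is automatic, since filtered colimits of algebras are computed on underlying modules and tensoring preserves them), deducing the sufficiency from dualisability of finitely generated projective modules together with the fact that the forgetful functor to modules creates limits, and transferring limit-preservation back to modules through square-zero extensions for the necessity.

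There is, however, a genuine gap at the very last step of the necessity direction. The ``classical characterisation'' you invoke is false as stated: preservation of arbitrary direct products by $M \otimes_R -$ characterises finitely \emph{presented} modules (Lenzing's theorem), not finitely generated projective ones. The module $\mathbb Z/2$ over $\mathbb Z$ is a counterexample, since $\bigl(\prod_i N_i\bigr)/2\bigl(\prod_i N_i\bigr) \cong \prod_i N_i/2N_i$ although $\mathbb Z/2$ is not projective. So from product-preservation alone you only get that $A$ is finitely presented over $R$; to conclude projectivity you must also extract flatness, that is, show that $A \otimes_R -$ preserves kernels of $R$-module maps. This does follow from your hypothesis, by running the same square-zero argument on equalizers rather than products: a pair of maps of augmented algebras $R \oplus M \rightrightarrows R \oplus N$ over $R$ restricts to a pair of module maps $f,g : M \to N$, the equalizer in augmented algebras is $R \oplus \ker(f-g)$, and $A \otimes_R -$ preserves equalizers because it preserves all limits. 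Finitely presented plus flat implies finitely generated projective, which closes the argument; but as written the proposal stops one step short of projectivity.
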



\addtocontents{toc}{\protect\vspace{\beforebibskip}} 
\addcontentsline{toc}{section}{\refname}             
\bibliographystyle{bib}
\bibliography{bib.bbl}

\end{document}